\def\@cite#1#2{[{{\bfseries #1}\if@tempswa , #2\fi}]}
\renewcommand{\section}{%
\@startsection{section}{1}{\z@}
{0.5truecm plus -1ex minus -.2ex}%
{1.0ex plus .2ex}{\bfseries\large}}
\def\@seccntformat#1{\csname the#1\endcsname.\ }
\numberwithin{equation}{section} 
\newtheorem{thm}{Theorem}[section]
\newtheorem{corollary}[thm]{Corollary}
\newtheorem{lem}[thm]{Lemma}
\theoremstyle{definition}
\newtheorem{df}{Definition}[section]
\newtheorem{remark}{Remark}[section]
\newtheorem*{prth1.1}{Proof of Theorem 1.1}
\newcommand{\ep}{\varepsilon}
\newcommand{\norm}[4]{\Vert #1 \Vert _{{#2}^{#3}(#4)}}
\newcommand{\h}{H}
\newcommand{\tmax}{T_{{\rm max},\ep}}
\newcommand{\lp}[2]{\|#2\|_{L^{#1}(\Omega)}}
\newcommand{\nep}{n_{\ep}}
\newcommand{\cep}{c_{\ep}}
\newcommand{\uep}{u_{\ep}}
\newcommand{\io}{\int_\Omega}
\newcommand{\cd}{(\cdot,t)}
\newcommand{\iio}{\int_0^T\!\! \io}
\newcommand{\iiio}{\int_0^{\widetilde{T}}\!\! \io}
\newcommand{\kl}[1]{\left( #1 \right)}
\newcommand{\ol}{\overline}
\newcommand{\abs}{\medskip}
\newcommand{\de}{D_{\e}}
\newcommand{\ye}{y_\e}
\newcommand{\ze}{z_\e}
\newcommand{\Ne}{\nep}
\newcommand{\Ce}{\cep}
\newcommand{\Ue}{\uep}
\newcommand{\tilt}{\widetilde{T}}
\newcommand{\deltamu}{\delta_{\mu,0}}
\def\D{\Delta}
\def\om{\Omega}
\def\na{\nabla}
\def\vp{\varphi}
\def\e{\varepsilon}
\def\h{\hspace}
\def\etmax{T_{{\rm max},\e}}
\def\norm#1{\|#1\|}
\begin{document}

\footnote[0]
    {2010{\it Mathematics Subject Classification}\/. 
    Primary: 35K55; Secondary: 92C17; 35Q35.
    }
\footnote[0]
    {{\it Key words and phrases}\/: 
    chemotaxis-Navier--Stokes system; 
    nonlinear diffusion; 
    global existence. 
    }
%==========================title==========================
\begin{center}
    \Large{{\bf 
   How strongly does diffusion or logistic-type degradation affect existence of global weak solutions in a chemotaxis-Navier--Stokes system? 
              }}
\end{center}
\vspace{5pt}
%===========================author=========================
\begin{center}
     Masaaki Mizukami\footnote{Partially supported by 
    JSPS Research Fellowships 
    for Young Scientists (No.\ 17J00101).}\\
    \vspace{2pt}
    Department of Mathematics, 
    Tokyo University of Science\\
    1-3, Kagurazaka, Shinjuku-ku, Tokyo 162-8601, Japan\\
    {\tt masaaki.mizukami.math@gmail.com}\\
\end{center}
%\begin{center}    
%    \small \today
%\end{center}

\vspace{2pt}
%=====================  Abstract  =======================
\newenvironment{summary}
{\vspace{.5\baselineskip}\begin{list}{}{%
     \setlength{\baselineskip}{0.85\baselineskip}
     \setlength{\topsep}{0pt}
     \setlength{\leftmargin}{12mm}
     \setlength{\rightmargin}{12mm}
     \setlength{\listparindent}{0mm}
     \setlength{\itemindent}{\listparindent}
     \setlength{\parsep}{0pt} 
     \item\relax}}{\end{list}\vspace{.5\baselineskip}}
\begin{summary}
{\footnotesize {\bf Abstract.}
This paper considers the chemotaxis-Navier--Stokes system 
with nonlinear diffusion and logistic-type degradation term   
\begin{equation*}
     \begin{cases}
         n_t + u\cdot\nabla n =
           \na\cdot(D(n)\na n) - \nabla\cdot(n \chi(c) \nabla c) + \kappa n - \mu n^\alpha, 
         &x\in \Omega,\ t>0,
 \\[2mm]
         c_t + u\cdot\nabla c = 
         \Delta c - nf(c),
          &x \in \Omega,\ t>0,
 \\[2mm]
        u_t + (u\cdot\nabla)u 
          = \Delta u + \nabla P + n\nabla\Phi + g, 
            \quad \nabla\cdot u = 0, 
          &x \in \Omega,\ t>0, 
     \end{cases} 
 \end{equation*} 
where $\Omega\subset \mathbb{R}^3$ is a bounded smooth domain; 
$D \ge 0$ is a given smooth function such that $D_1 s^{m-1} \le D(s) \le D_2 s^{m-1}$ for all $s\ge 0$ 
with some $D_2 \ge D_1 > 0$ and some $m > 0$; 
$\chi,f$ are given smooth functions satisfying 
\begin{align*}
\kl{\frac f\chi}' >0, \quad \kl{\frac f\chi}'' \le 0, \quad (\chi f)' \ge 0 \quad \mbox{on} \ [0,\infty); 
\end{align*}
$\kappa \in \mathbb{R},\mu \ge0,\alpha>1$ are constants. 
This paper shows existence of global weak solutions to the above system under the condition that 
\begin{align*} 
m >\frac{2}{3},\quad \mu \ge 0 \quad  \mbox{and}\quad  \alpha >1 
\end{align*}
hold, or that 
\begin{align*} 
m> 0, \quad \mu>0 \quad  \mbox{and} \quad \alpha > \frac{4}{3} 
\end{align*}
hold.  This result asserts that ``strong'' diffusion effect or ``strong'' logistic damping derives existence of global weak solutions even though the other effect is ``weak'', and can include previous works \cite{KM,Lankeit_2016,Winkler_2016,zhang_li}.
}
\end{summary}
\vspace{10pt}

\newpage
%%==============================================================%%
%%==============                                  ==============%%
%%======                      Section1                    ======%%
%%====                                                      ====%%
%%==                                                          ==%%
%%====　　　　　   　　Introduction Results　      　　　 　====%%
%%======                                                  ======%%
%%==============                                  ==============%%
%%==============================================================%%

\section{Introduction}
This work deals with the chemotaxis-Navier--Stokes system with nonlinear diffusion and logistic-type degradation term
\begin{align}\label{Intro;Pro1}
 \begin{cases}
  n_t + u\cdot \na n = \Delta n^m - \chi \na\cdot (n \na c) + 
  \kappa n - \mu n^\alpha, &
 \\ 
  c_t + u \cdot \na c  = \Delta c - nc, &
 \\
  u_t + (u\cdot \na)u = \Delta u + \na P + n\na \Phi
 \end{cases}
\end{align}
for $x\in \Omega$ and $t>0$, 
where $\Omega \subset \mathbb{R}^3$ is a bounded domain with smooth boundary, $m,\chi>0$, $\kappa,\mu\ge 0$ and $\alpha >1$ are constants and $\Phi$ is a given function, 
and consider the question: 
\begin{center}
{\it 
  How strongly does diffusion or logistic-type degradation affect existence of \\ 
  global weak solutions in a chemotaxis-Navier--Stokes system? 
}%
\end{center}
More precisely, the purpose of this work is to determine conditions for $m$ and $\alpha$ which derive global existence of weak solutions to the system \eqref{Intro;Pro1}. 
The system \eqref{Intro;Pro1} is a generalization of a chemotaxis-Nevier--Stokes system which is proposed by Tuval et al.\ \cite{Tuvaletal} and  describes the situation where a species in a drop of water moves towards higher concentration of oxygen according to a property called chemotaxis. 
Here chemotaxis is a property such that a species reacts on some chemical substance and moves towards or moves away from higher concentration of that substance. 
Chemotaxis is one of important properties in the animals' life, e.g., movement of sperm, migrations of neurons and lymphocytes and tumor invasion. 
In \eqref{Intro;Pro1}, $\mathbb{R}$-valued unknown functions $n=n(x,t)$, $c=c(x,t)$, $P=P(x,t)$ shows the density of species, the concentration of oxygen, the pressure of the fluid, respectively, and an $\mathbb{R}^3$-valued unknown function $u=u(x,t)$ describes the fluid velocity field. 
%In \eqref{Intro;Pro1} $n=n(x,t)$ shows the density of species; 
%$c=c(x,t)$ represents the concentration of oxygen; $u=u(x,t)$ describes the fluid velocity field; $P=P(x,t)$ stands for the pressure of the fluid. 

\abs
In the study of the system \eqref{Intro;Pro1}, we often refer to the study of the  chemotaxis system
\begin{align}\label{Intro;Pro2}
 \begin{cases}
  n_t = \Delta n^m - \chi \na \cdot (n\na c) + \kappa n - \mu n^\alpha, 
 & 
\\ 
  c_t = \Delta c - c + n.
 &
 \end{cases}
\end{align}
Thus we first introduce several known results about the chemotaxis system: 

\abs
In the study of the Keller--Segel system, that is, the system \eqref{Intro;Pro2} with $m=1$ and $\kappa=\mu=0$%, 
\begin{align*}
\begin{cases} 
 n_t = \Delta n - \chi \na \cdot (n\na c), & \\ 
 c_t = \Delta c - c + n, & 
\end{cases}
\end{align*}
the chemotaxis term $- \chi \na \cdot (n\na c)$ derives blow-up phenomena in some cases; in the $2$-dimensional setting it was shown that there exists $\vartheta >0$ such that, if a mass of an initial data of $n$ is less than $\vartheta$ then global bounded classical solutions exist (see Nagai--Senba--Yoshida \cite{Nagai-Senba-Yoshida}), and for all $M > \vartheta$ there is an initial data $n_0$ of $n$ such that $M=\io n_0$ and a corresponding solution blows up in finite/infinite time (see Horstmann--Wang \cite{Horstmann-Wang} and Mizoguchi--Winkler \cite{Mizoguchi-Winkler}); in the $3$-dimensional setting 
for all $M>0$ there is an initial data $n_0$ of $n$ such that $M=\io n_0$ and a corresponding solution blows up in finite time; 
related works about the Keller--Segel system can be found in \cite{Xinru_higher,Osaki-Yagi,win_aggregationvs}; 
blow-up phenomena is excluded in the 1-dimensional case (\cite{Osaki-Yagi}); global existence results in the higher-dimensional setting are in \cite{win_aggregationvs,Xinru_higher}.  

\abs 
On the other hand, in the chemotaxis system with logistic term which is \eqref{Intro;Pro2} with $m=1$ and $\alpha =2$
\begin{align*}
\begin{cases}
  n_t = \Delta n - \chi \na \cdot (n\na c) + \kappa n -\mu n^2, &  
\\ %\quad 
  c_t = \Delta c - c + n, & 
\end{cases}
\end{align*} 
the logistic term $\kappa n - \mu n^2$ suppresses blow-up phenomena; 
in the $2$-dimensional setting Osaki et al.\ \cite{OTYM} and 
Jin--Xiang \cite{Tian-HaiYang_2018_2D} derived that for all $\mu>0$ there exist global classical solutions;  
Winkler \cite{Winkler_2010_logistic} showed global existence of classical solutions under some largeness condition for $\mu>0$; recently, Xiang \cite{Tian_2018_3D} obtained an explicit condition for $\mu>0$ to derive global existence of classical solutions; 
Lankeit \cite{lankeit_evsmoothness} established global existence of weak solutions for arbitrary $\mu>0$; 
more related works are in \cite{he_zheng,W2014}; 
%the 2-dimensional setting can be found in \cite{OTYM,Tian-HaiYang_2018_2D}; 
Winkler \cite{W2014} and He--Zheng \cite{he_zheng} showed 
asymptotic behavior of global classical solutions. 
%; eventual smoothness and asypmtotic behavior of weak solutions are in \cite{lankeit_evsmoothness}. 

\abs 
However, the small logistic-type degradation damping may not suppress blow-up phenomena; in the parabolic--elliptic chemotaxis system with logistic-type degradation term 
\begin{align*}
\begin{cases} 
   n_t = \Delta n - \chi \na \cdot (n\na c) + \kappa n -\mu n^\alpha, & 
\\%\quad 
  0 = \Delta c - c + n, & 
\end{cases} 
\end{align*}
Winkler \cite{Winkler_2018_bu_log} showed that, 
if $\alpha < \frac 76$ in the $3,4$-dimensional cases and if $\alpha < 1+\frac{1}{2(N-1)}$ in the $N$-dimensional case with $N\ge 5$,  
then there exists an initial data such that a corresponding solution blows up in finite time. 
Related works can be found in \cite{Viglialoro_2016,Viglialoro_2017,Viglialoro-Tomas,Winkler_2008}; 
Winkler \cite{Winkler_2008} showed existence of very weak solutions 
under the condition that $\alpha > 2-\frac 1N$; 
Viglialoro \cite{Viglialoro_2016,Viglialoro_2017} obtained existence of very weak solutions to the parabolic--parabolic system and their boundedness; 
Their large time behavior can be found in \cite{Viglialoro-Tomas}.  

\abs 
Moreover, in the chemotaxis system with degenerate diffusion 
\begin{align*}
\begin{cases}
   n_t = \Delta n^m - \chi \na \cdot (n^{q-1}\na c), & 
\\ %\quad 
  c_t = \Delta c - c + n & 
\end{cases} 
\end{align*}
with some $q\ge 2$, in the $N$-dimensional setting 
some smallness condition for $m\ge 1$ yields existence of blow-up solutions to the system; Ishida--Yokota \cite{Ishida-Yokota_2013} and Hashira--Ishida--Yokota \cite{Hashira-Ishida-Yokota} obtained that 
the condition that $m < q- \frac{2}{N}$ entails existence of an initial data such that a corresponding solution blows up in finite time; 
conversely, it was shown that the restriction of $m> q - \frac{2}{N} $ enables us to find global weak solutions (\cite{Ishida-Seki-Yokota}). 

\abs 
In summary, in the study of the chemotaxis system, 
some largeness condition for an effect of the logistic-type degradation or  
the nonlinear diffusion entails global existence, 
and some smallness condition for the effect derives existence of blow-up solutions. 
Does this happen also in the chemotaxis-Navier--Stokes system? 
In order to consider this question 
we next recall several related works about the chemotaxis-Navier--Stokes system \eqref{Intro;Pro1}: 

\abs
We first introduce results about the fluid-free system, which is \eqref{Intro;Pro1} with $u=0$; in the case that $m=1$ and $\kappa=\mu =0$ 
Tao--Winkler \cite{Tao-Winkler_2012_cons} obtained existence of global solutions and their large time behavior; Tao \cite{Tao_2011_cons} established global existence of bounded classical solutions to the system with $m=1$ and $\kappa=\mu=0$ under some smallness condition for an initial data of $c$; 
existence of global weak solutions to the system with $m>1$ and $\kappa=\mu=0$ in the two-dimensional setting is in \cite{Tao-Winkler_2012_KSF}; 
Winkler \cite{W-2015} obtained that  global bounded solutions of the system with $m>\frac 76$ and $\kappa=\mu=0$ exist 
in the three-dimensional setting, and Tao--Winkler \cite{Tao-Winkler_2013_cons} derived existence of locally bounded global solutions to 
the system with $m> \frac 87$ and $\kappa = \mu=0$; 
in the case that $m=1$, $\mu>0$ and $\alpha =2$ 
Zheng--Mu \cite{Zheng-Mu_2015} and Lankeit--Wang \cite{Lankeit-Wang_2017} showed global existence of bounded classical solutions 
under some smallness condition for an initial data of $c$; 
Jin \cite{Jin-2017} established existence of bounded global weak solutions to 
the system with $m>1$, $\mu>0$ and $\alpha =2$. 

\abs 
We then introduce results about the system \eqref{Intro;Pro1}. 
In the case that $\kappa= \mu =0$ and in the $3$-dimensional setting, 
it was shown that 
the diffusion effect dominate 
the chemotactic interaction; 
in the case that $m = 1$ Winkler \cite{Winkler_2016} showed global existence of weak solutions; 
in \eqref{Intro;Pro1} with $\kappa=\mu=0$, Zhang--Li  \cite{zhang_li} asserts that if $m\ge \frac 23$ then global weak solutions exist; however, there seem to be several miscalculations in the proof, 
e.g., in the proof of \cite[(3.6)]{zhang_li} they used the Young inequality 
\[
 a^{\frac 1{3m-1}} \le \ep a + C(\ep,m)
\]
with some $C(\ep,m) >0$  
for all $\ep>0$ and for $m\ge \frac 23$ 
even though this inequality does not hold when $m=\frac 23$ (which implies that $\frac 1{3m-1} = 1$); also in the case that $m>\frac 23$ there are still gaps in the proof  
 (for more details, see Remarks \ref{remark;zhang-li;young} and \ref{remark;zhang-li;esti} in this paper); 
although there are miscalculations, they constructed essential estimates for obtaining global existence of weak solutions;  
thus another purpose of this work is to correct arguments in \cite{zhang_li} and to establish some condition of $m$ for deriving global existence of weak solutions. 

\abs 
Moreover, in the case that $\mu>0$ and $\alpha=2$, 
it was established that, for all $\mu>0$, global weak solutions exist; 
Lankeit \cite{Lankeit_2016} first obtained global existence of weak solutions to \eqref{Intro;Pro1} with $m=1$ and $\alpha =2$; 
recently, global existence of weak solutions to \eqref{Intro;Pro1} with $m>0$ and $\alpha=2$ was shown in \cite{KM}; however, a general case such as that $m>0$ and $\alpha >1$ has not considered yet. 
Thus the main purpose of this paper is to obtain some conditions for $m >0$ and $\alpha >1$ which derive existence of global weak solutions to the chemotaxis-Navier--Stokes system. 
\abs 

In order to attain the purposes of this paper: 
\begin{itemize}
\setlength{\itemsep}{0cm}
\item to obtain some conditions for deriving global existence of weak solutions, 
\item to correct arguments in \cite{zhang_li} for establishing global weak solutions,  
\end{itemize}
we consider the following chemotaxis-Navier--Stokes system 
with nonlinear diffusion and logistic-type degradation term:
\begin{equation}\label{P}
     \begin{cases}
         n_t + u\cdot\nabla n = 
         \na \cdot (D(n)\na n) - \nabla\cdot(n\chi(c)\nabla c) 
          + \kappa n -\mu n^\alpha,
         &x\in \Omega,\ t>0,
 \\[2mm]
         c_t + u\cdot\nabla c = \Delta c -nf(c),
          &x \in \Omega,\ t>0,
 \\[2mm]
        u_t  + (u\cdot\nabla) u 
          = \Delta u + \nabla P + n\nabla\Phi  + g, 
            \quad \nabla\cdot u = 0, 
          &x \in \Omega,\ t>0, 
 \\[2mm]
        D(n)\partial_\nu n = \partial_\nu c = 0, \quad 
        u = 0, 
        &x \in \partial\Omega,\ t>0, 
 \\[2mm]
        n(x,0)=n_{0}(x),\ 
        c(x,0)=c_0(x),\ u(x,0)=u_0(x), 
        &x \in \Omega,
     \end{cases}
 \end{equation}
\noindent
where $\Omega$ is a bounded domain 
in $\mathbb{R}^3$ with smooth boundary $\partial\Omega$ and 
$\partial_\nu$ denotes differentiation with respect to the 
outward normal of $\partial\Omega$; 
$D$ is a function satisfying 
\begin{align}\label{condi;D}
 D\in C_{\rm loc}^{1+\gamma}([0,\infty)), \quad 
 D_1 s^{m-1} \le D(s) \le D_2 s^{m-1} 
 \quad \mbox{for all} \ s\ge 0 
\end{align}
with some $\gamma>0$, $D_2 \ge D_1 >0$ and $m>0$; $\kappa \in \mathbb{R}$, $\mu \ge 0$, $\alpha >1$ are constants; 
functions $\chi$ and $f$ are assumed that 
\begin{align}\label{condi;chi}
& \chi \in C^2([0,\infty)),\quad \chi >0 \quad \mbox{on} \ [0,\infty), \\ \label{condi;f}
& f\in C^2([0,\infty)), \quad f(0) =0, \quad f>0 \quad \mbox{on} \ (0,\infty),
\end{align}
and moreover, 
\begin{align}\label{condi;fchi}
 \kl{\frac f\chi}' >0, \quad \kl{\frac f\chi}'' \le 0, \quad (\chi f)' \ge 0 \quad \mbox{on} \ [0,\infty)
\end{align}
hold; 
$n_0, c_0, u_0, \Phi, g$ 
are known functions satisfying
 \begin{align}\label{condi;ini1}
   &0 < n_0 
   \in X:= \begin{cases}
    L^{m-1}(\Omega) & \mbox{if} \ m> 2,   
   \\[1mm]   
   L\log L(\Omega) & \mbox{if} \ m \le 2,
   \end{cases} 
 \\[1mm] \label{condi;ini1.5}
   &0 \le c_0 \in L^\infty(\Omega) \ \mbox{such that} \ \sqrt{c_0}\in W^{1,2}(\Omega),   
 \quad 
   u_0 \in L^2_{\sigma}(\Omega), \\ \label{condi;ini2}
   &\Phi \in C^{1+\beta}(\overline{\Omega}), \quad 
   g\in L^2_{\rm loc}([0,\infty); L^{\frac 65}(\Omega)) 
 \end{align}
for some 
$\beta > 0$, where $L^2_\sigma(\Omega):=\{\varphi\in L^2(\Omega) \mid \nabla \cdot \varphi =0\}$.
%$\theta \in \left(\frac{3}{4}, 1\right)$. 
%$A$ denotes the realization of the Stokes operator 
%under homogeneous Dirichlet boundary conditions 
%in the solenoidal subspace $L_{\sigma}^2(\Omega)$ of $L^2(\Omega)$. 

\abs  
The main result reads as follows. 
%
%%%%%%%%%%%%%%%%%%%%%%%%%  Theorem1.1  %%%%%%%%%%%%%%%%%%%%%%%%%%
\begin{thm}\label{mainthm1}
  Let $\Omega\subset\mathbb{R}^3$ be a bounded smooth domain and 
  let $\kappa\in \mathbb{R}$, $\mu\ge 0$, $\alpha >1$. 
Assume that $D$ satisfies \eqref{condi;D} with some $\gamma>0$, $D_2\ge D_1>0$ and $m > 0$,  and that $\chi,f$ satisfy \eqref{condi;chi}{\rm --}\eqref{condi;fchi} as well as that 
  $n_0, c_0, u_0, \Phi, g$ satisfy \eqref{condi;ini1}{\rm --}\eqref{condi;ini2} 
  with some $\beta \in (0,1)$.
  Then, if 
  \begin{align}\label{condi;alpha,m}
     m>\frac 23,\ \mu \ge 0,\ \alpha>1, \quad \mbox{or} \quad 
     m >0,\ \mu >0, \  \alpha > \frac 43
  \end{align}
  hold, 
  there exists a global weak solution $(n,c,u)$ of \eqref{P} in the sense of Definition \ref{def;weaksol}, which can be approximated by a sequence of solutions $(n_{\ep} ,c_{\ep}, u_{\ep})$ of an approximate problem  $($see Section \ref{sec2}\/$)$ 
  in a pointwise manner. 
\end{thm}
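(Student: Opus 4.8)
The plan is to follow the by-now standard regularization–compactness scheme for chemotaxis–fluid systems. First I would solve the approximate problem introduced in Section~\ref{sec2}, whose solutions $(\nep,\cep,\uep)$ are global and smooth; this is where a regularized diffusion such as $D(\nep)+\ep$, together with suitable mollifications of the drift and of the convective terms, is used to rule out blow-up of the approximations. All of the analytical work then goes into deriving a priori bounds for $(\nep,\cep,\uep)$ that are uniform in $\ep$ and strong enough both to extract convergent subsequences and to pass to the limit in every nonlinear term.

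The backbone of these bounds is a quasi-energy functional. Integrating the first equation gives the uniform bound $\io \nep \le C$ and, when $\mu>0$, the additional space–time bound $\mu\iio \nep^\alpha \le C$, while the maximum principle applied to the second equation yields $0\le \cep \le \|c_0\|_{L^\infty(\om)}$. The structural hypotheses \eqref{condi;fchi} on $\chi,f$ are exactly what is needed in order to differentiate a functional of the form
\[
\io \nep\log\nep + \frac12 \io \frac{|\na\cep|^2}{(\text{a function of }\cep)} + \frac{K}{2}\io |\uep|^2
\]
and to absorb the chemotactic cross terms, so that its time derivative controls the dissipation rates $\iio |\na \nep^{m/2}|^2$ (equivalently a bound for $\nep^{m/2}$ in $L^2(H^1)$), a fourth-order quantity in $\cep$, and $\iio |\na\uep|^2$. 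From the $\io\nep\log\nep$ term and the Navier–Stokes energy one then recovers $\nep\in L^\infty(L\log L)$ (or $L^\infty(L^{m-1})$ when $m>2$) and $\uep\in L^\infty(L^2)\cap L^2(H^1_\sigma)$.

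With these in hand I would upgrade the integrability of $\nep$. Interpolating the $L^\infty(L\log L)$ (or $L^1$) bound against the diffusion dissipation $\nep^{m/2}\in L^2(L^6)$, which holds via $H^1\hookrightarrow L^6$ in $\mathbb R^3$, respectively against the logistic bound $\nep\in L^\alpha((0,T)\times\om)$, produces a space–time estimate $\nep\in L^p$ with $p$ large enough to give meaning to, and to pass to the limit in, the chemotactic flux $\nep\chi(\cep)\na\cep$ and the diffusion flux $D(\nep)\na\nep$; this is precisely where the two alternative thresholds in \eqref{condi;alpha,m} enter, the first driven by diffusion and the second by damping. Equipped with uniform bounds on the time derivatives of $\nep,\cep,\uep$ in suitable negative-order spaces, an Aubin–Lions argument yields strong convergence of $\nep,\cep,\uep$ along a subsequence, enough to identify the limit as a weak solution in the sense of Definition~\ref{def;weaksol} and to verify the claimed pointwise approximation.

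The main obstacle is the borderline integrability of $\nep$ in the ``weak'' regimes, that is, when $m$ exceeds $\tfrac23$ only slightly or when $\alpha$ exceeds $\tfrac43$ only slightly with small $\mu>0$. In these cases the gain from diffusion or from damping is just barely sufficient to control the cubically critical (in $\mathbb R^3$) chemotaxis term, and the interpolation exponents must be tracked with care. Obtaining a space–time bound on $\nep$ strictly above the threshold needed for equiintegrability of $\nep\chi(\cep)\na\cep$, and thereby ruling out concentration in the limit, is the crux of the argument and the step I expect to be hardest.
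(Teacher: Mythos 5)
Your overall scheme --- the same regularization, the same quasi-energy $\io\nep\log\nep+\frac12\io|\na\Psi(\cep)|^2+K\io|\uep|^2$, interpolation to raise integrability, and an Aubin--Lions limit passage --- is the one the paper follows, but there is a genuine gap in \emph{where} you let the hypotheses \eqref{condi;alpha,m} act. You assert that the structural conditions \eqref{condi;fchi} alone make the time derivative of the quasi-energy control all three dissipation rates, and you have the thresholds $m>\frac23$, respectively $\alpha>\frac43$, enter only afterwards, when upgrading the space-time integrability of $\nep$ and checking equiintegrability of the chemotactic flux. In fact the thresholds are needed already, and primarily, to close the energy inequality itself: testing the fluid equation by $\uep$ produces the buoyancy term $\io\nep\na\Phi\cdot\uep$, hence the term $C\lp{\frac 65}{\nep}^2$ on the right-hand side (Lemma \ref{lem;energy;u}), and this term is controlled neither by the mass bound nor by the chemotactic structure. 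It must be absorbed either into the diffusion dissipation via Gagliardo--Nirenberg and Young (Lemma \ref{lem;problem;dif-C1}), which is possible precisely because $\frac{2}{3m-1}<2$, i.e.\ $m>\frac23$ strictly --- this is exactly the point where the argument of \cite{zhang_li} breaks down at $m=\frac23$ (Remark \ref{remark;zhang-li;young}) --- or into the logistic space-time bound $\mu\iio\nep^\alpha$ provided by Lemma \ref{lem;basic} (Lemma \ref{lem;problem;dif-C2}), which is possible precisely when $\alpha>\frac43$. If $m\le\frac23$ and $\mu=0$, nothing absorbs $\lp{\frac 65}{\nep}^2$, so none of the bounds you call the ``backbone'' ($L^\infty(L\log L)$ for $\nep$, $L^2(H^1)$ for $\uep$, the fourth-order quantity in $\cep$) is available, and the rest of your plan has no starting point. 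So the step ``its time derivative controls the dissipation rates'' fails as stated; the case distinction has to be made \emph{inside} the energy argument, not after it.

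Two secondary points your plan passes over, both of which the paper must handle and both of which were gaps in \cite{zhang_li}: first, a space-time bound $\iio|\nep\uep|^q\le C(T)$ for some $q>1$ (Lemma \ref{lem;esti3}) does not follow from pairing $\nep\in L^{p_1}$ with $\uep\in L^{10/3}$ when $m$ is only slightly above $\frac23$ (that pairing needs $p_1\ge\frac{10}{7}$, i.e.\ $m\ge\frac{16}{21}$), yet such a bound is required both for the time-regularity estimate on $\nep$ and to pass to the limit in the transport term $nu$; the paper obtains it by a separate interpolation choosing $r\in(\frac65,\min\{p_1,2\})$ and a matching exponent for $\uep$. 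Second, identifying the weak limit of the diffusive flux, i.e.\ showing that the limit of $\na\kl{\int_{-\ep}^{\nep}\de(\sigma)\,d\sigma}$ is $\na\kl{\int_0^n D(\sigma)\,d\sigma}$ (and equals $D(n)\na n$ only when $1\le m\le 2$), requires its own Vitali-type argument (Lemma \ref{lem;conv3}) and does not follow directly from the strong convergence of $\nep$.
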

%%%%%%%%%%%%%%%%%%%%%%%%%  Theorem1.1  %%%%%%%%%%%%%%%%%%%%%%%%%%

\begin{remark}
This theorem gives existence of global weak solutions to \eqref{P}. 
Here we note that, in the previous work \cite{zhang_li}, 
because of lacking regularities of $\na u$, 
constructing the identity $\na \left( \int_0^n D(\sigma)\, d\sigma \right) = D(n)\na n$ seems not to be correct. 
This result shows this identity in the case that $1\le m \le 2$.  
\end{remark}

As an application of this result, we can construct existence result of global weak solutions to \eqref{P} with $\kappa = \mu = 0$, 
which is a correction of the result by Zhang--Li \cite{zhang_li}. 

\begin{corollary}
Let $\Omega\subset\mathbb{R}^3$ be a bounded smooth domain and 
let $\kappa=\mu=0$. 
Assume that $D$ satisfies \eqref{condi;D} with some $\gamma>0$, $D_2\ge D_1>0$ and $m > 0$, and that $\chi,f$ satisfy \eqref{condi;chi}{\rm --}\eqref{condi;fchi} as well as that 
  $n_0, c_0, u_0, \Phi, g$ satisfy \eqref{condi;ini1}{\rm --}\eqref{condi;ini2} with some $\beta \in (0,1)$.
Then, if $m>\frac 23$ holds, 
there exists a global weak solution $(n,c,u)$ of \eqref{P} in the sense of Definition \ref{def;weaksol}. 
Moreover, if $1\le m \le 2$,  then 
$\na \left( \int_0^n D(\sigma)\, d\sigma \right) = D(n)\na n$ holds. 
\end{corollary}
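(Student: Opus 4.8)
The first assertion is an immediate consequence of Theorem~\ref{mainthm1}. Setting $\kappa=\mu=0$, the degradation term $\mu n^\alpha$ is absent, so we may fix any $\alpha>1$; then the first alternative in \eqref{condi;alpha,m}, namely $m>\frac23$, $\mu\ge0$, $\alpha>1$, holds under the sole hypothesis $m>\frac23$. Hence Theorem~\ref{mainthm1} provides a global weak solution $(n,c,u)$ of \eqref{P} in the sense of Definition~\ref{def;weaksol}, obtained as the pointwise a.e.\ limit of approximate solutions $(\nep,\cep,\uep)$. It remains to verify the identity $\na\Psi(n)=D(n)\na n$, where $\Psi(s):=\int_0^s D(\sigma)\,d\sigma$, in the regime $1\le m\le2$. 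The plan is to pass to the limit in the corresponding identity $\na\Psi(\nep)=D(\nep)\na\nep$, which holds for the smooth approximations, using the uniform energy information from Section~\ref{sec2}: for every $T>0$, $\nep$ is bounded in $L^\infty(0,T;L^m(\om))$ and $\na\nep^{m/2}$ is bounded in $L^2(0,T;L^2(\om))$.

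From these bounds I would first produce a weak gradient for the limit. Writing $\na\nep=\tfrac2m\,\nep^{1-m/2}\,\na\nep^{m/2}$, the upper restriction $m\le2$ makes the exponent $1-\frac m2\ge0$, so the $L^\infty L^m$ bound controls $\nep^{1-m/2}$ in $L^\infty(0,T;L^{2m/(2-m)}(\om))$; Hölder's inequality then places $\na\nep$ in $L^2(0,T;L^r(\om))$ with $\frac1r=\frac12+\frac{2-m}{2m}=\frac1m$, that is $r=m$, and here the lower restriction $m\ge1$ is exactly what guarantees $r\ge1$. Thus $\nep$ is bounded in $L^2(0,T;W^{1,m}(\om))$, and combined with the a.e.\ convergence $\nep\to n$ this yields $\na\nep\rightharpoonup\na n$ in $L^2(0,T;L^m(\om))$; in particular $\na n$ exists as a function and the right-hand side $D(n)\na n$ is meaningful.

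Next I would identify the two weak limits. Since $\Psi$ is continuous and $\nep\to n$ a.e., $\Psi(\nep)\to\Psi(n)$; moreover, by \eqref{condi;D} one has $|D(\nep)\na\nep|\le D_2\,\nep^{m/2}\cdot\nep^{m/2-1}|\na\nep|$, a product of the $L^\infty L^2$ factor $\nep^{m/2}$ with the $L^2L^2$ factor $\tfrac2m\na\nep^{m/2}$, so $\na\Psi(\nep)=D(\nep)\na\nep$ is bounded in $L^2(0,T;L^1(\om))$, whence closedness of the distributional gradient yields $\na\Psi(n)=\lim\na\Psi(\nep)$ in the weak sense. On the other hand, continuity of $D$ together with the uniform integrability furnished by the $L^m$ (equivalently $L\log L$) bound and Vitali's theorem upgrades $\nep\to n$ to a strong convergence $D(\nep)\to D(n)$; combining this with the weak convergence $\na\nep\rightharpoonup\na n$ gives $D(\nep)\na\nep\rightharpoonup D(n)\na n$. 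Equating the two limits produces the claimed identity.

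The step I expect to be the main obstacle is this last weak-strong passage. The integrability exponents must be arranged so that Hölder's inequality closes, which is precisely where the two-sided constraint $1\le m\le2$ enters, and the convergence $D(\nep)\to D(n)$ must be upgraded to a topology strong enough that its product with the only weakly convergent gradients $\na\nep$ still converges to $D(n)\na n$. Once these exponents are fixed and the uniform integrability secured, the remaining manipulations are routine.
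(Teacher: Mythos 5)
The existence assertion is fine and matches the paper: with $\kappa=\mu=0$ the first alternative of \eqref{condi;alpha,m} reduces to $m>\frac23$, and Theorem \ref{mainthm1} applies. The gap lies in your proof of the identity $\na\left(\int_0^n D(\sigma)\,d\sigma\right)=D(n)\na n$, and it occurs at the very first step: the bound you take as your starting point, that $(\nep)_{\ep\in(0,1)}$ is bounded in $L^\infty(0,T;L^m(\Omega))$, is not established anywhere in the paper and does not follow from the energy estimates of Section \ref{sec2}. Lemma \ref{lem;esti1} gives only $\io \nep\log\nep\le C(T)$ uniformly in time, i.e.\ an $L\log L$ bound; for $m>1$ this is strictly weaker than an $L^m$ bound, so your parenthetical ``$L^m$ (equivalently $L\log L$)'' is false precisely in the range $1<m\le 2$ you need. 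Everything downstream rests on this premise: the placement of $\nep^{1-m/2}$ in $L^\infty(0,T;L^{2m/(2-m)}(\Omega))$ and hence of $\na\nep$ in $L^2(0,T;L^m(\Omega))$; the placement of $\nep^{m/2}$ in $L^\infty(0,T;L^2(\Omega))$ and hence of $D(\nep)\na\nep$ in $L^2(0,T;L^1(\Omega))$; and the uniform integrability feeding Vitali's theorem. None of these are justified. A secondary defect: even granting the bound, at $m=1$ your gradient space is $L^2(0,T;L^1(\Omega))$, and boundedness in $L^1$ does not give weak sequential compactness, so ``$\na\nep\rightharpoonup\na n$'' would require a separate equi-integrability argument there.

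The repair is to run your weak-times-strong scheme on the \emph{space-time} estimates the paper actually has, which is exactly what Lemma \ref{lem;conv3} does. For $\frac23<m\le2$, Lemma \ref{lem;esti2-2} gives $\iio (\Ne+\e)^{\frac{3m+2}{3}}\le C(T)$ and $\iio|\na\Ne|^{\frac{3m+2}{4}}\le C(T)$ (the latter comes from Young's inequality applied to $(\Ne+\e)^{m-2}|\na\Ne|^2$ and $(\Ne+\e)^{\frac{3m+2}{3}}$, and this is where $m\le2$ enters). Since $3(m-1)<3m-2$, one has $(m-1)\frac{3m+2}{3m-2}<\frac{3m+2}{3}$, so $\de(\Ne)\le D_2(\Ne+\e)^{m-1}$ is uniformly integrable to the power $\frac{3m+2}{3m-2}$ and Vitali upgrades a.e.\ convergence to $\de(\Ne)\to D(n)$ strongly in $L^{\frac{3m+2}{3m-2}}(\Omega\times(0,T))$ (here $m\ge1$ is used: the exponent $m-1$ is nonnegative and $D$ is nonsingular at $0$); meanwhile $\na\Ne\rightharpoonup\na n$ weakly in $L^{\frac{3m+2}{4}}$, which is legitimate because $\frac{3m+2}{4}>1$. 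As $\frac{3m-2}{3m+2}+\frac{4}{3m+2}=1$, H\"older closes and $\de(\Ne)\na\Ne\rightharpoonup D(n)\na n$ in $L^1(\Omega\times(0,T))$. Note finally that the approximate solutions satisfy $\na\left(\int_{-\ep}^{\Ne}\de(\sigma)\,d\sigma\right)=\de(\Ne)\na\Ne$ with $\de(s)=D(s+\ep)$, not $D(\nep)\na\nep$; identifying the limit of $\int_{-\ep}^{\Ne}\de(\sigma)\,d\sigma$ with $\int_0^n D(\sigma)\,d\sigma$ is a separate (easy, Vitali-based) step also carried out in Lemma \ref{lem;conv3}.
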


%%%%%%%%%%%%%%%%%%%%%%%%%  Remark  %%%%%%%%%%%%%%%%%%%%%%%%%%
\begin{remark}
 In this result we could verify existence of at least one global weak solution under the condition that $m>\frac 23$. Here we could not include the case that $m=\frac 23$ because of several reasons 
 (see Remarks \ref{remark;zhang-li;young} and \ref{remark;zhang-li;esti}). 
\end{remark}
%%%%%%%%%%%%%%%%%%%%%%%%%  Remark  %%%%%%%%%%%%%%%%%%%%%%%%%%

%The proof of Theorem \ref{mainthm1} can be applied to 
%a nondegenerate chemotaxis-Navier--Stokes system which namely 
%is the case that $D(s)$ satisfies 
%$D_1(s+1)^{m-1} \le D(s) \le D_2 (s+1)^{m-1}$ for all $s\ge 0$ 
%with some $D_2 \ge D_1>0 $ and $m\in \mathbb{R}$, 
%and enables us to see the following result. 
%
%%%%%%%%%%%%%%%%%%%%%%%%%%  Cor.1.2  %%%%%%%%%%%%%%%%%%%%%%%%%%
%\begin{corollary}\label{cor}
%\tbc
%  Let $\Omega\subset\mathbb{R}^3$ be a bounded smooth domain and   
%  let $\chi, \kappa \geq 0$ and 
%  $ \mu, m>0$. 
%  Assume that $n_0, c_0, u_0$ and $\Phi$ satisfy \eqref{condi;ini1}{\rm --}\eqref{condi;ini2} 
%  with some $q>3$, $\theta \in(\frac{3}{4},1)$ and $\beta \in (0,1)$.
%  Then there exists a weak solution of 
%  the nondegenerate chemotaxis-Navier--Stokes system.
%\end{corollary}
%%%%%%%%%%%%%%%%%%%%%%%%%  Cor.1.2  %%%%%%%%%%%%%%%%%%%%%%%%%%

\abs 
The strategy of the proof of Theorem \ref{mainthm1} is to consider approximate problem (see \eqref{Pe}) and to show convergences 
via using arguments similar to those in \cite{Winkler_2016} and \cite{zhang_li}. 
%via using uniform-in-parameter estimates for approximate solutions. 
In Section \ref{sec2} we introduce an approximate problem and show several  useful properties for an approximate solution $(\nep,\cep,\uep)$ by using an energy function 
$%\[
 \io \Ne \log \Ne + \frac{1}{2} \io |\na \Psi (\Ce)|^2 + K \io |\Ue|^2
$ %\] 
with some function $\Psi$ and some constant $K>0$, which is used in \cite{Winkler_2016} and \cite{zhang_li}; 
one of keys for a treatment of an energy function is to derive some estimate for $\lp{\frac 65}{\nep}^2$ which comes from a derivative of $\io |\uep|^2$ (see Lemma \ref{lem;energy;u}); in the case that $m> \frac 23$, by virtue of the Gagliardo--Nirenberg inequality, we correct arguments in \cite{zhang_li} and show that 
%for all $\eta>0$ there is $C >0$ such that 
\[
 \lp{\frac 65}{\nep}^2 \le 
 \eta\lp{2}{\na (\nep +\e)^{\frac m2}}^2 
 + C(\eta) 
\] 
holds with some $C(\eta)>0$ for all $\eta>0$ (see Lemma \ref{lem;problem;dif-C1}); 
on the other hand, in the case that $\mu > 0$ and $\alpha > \frac 43$, from an interpolation argument we have the new estimate:   
\[
  \lp{\frac 65}{\nep}^2 \le 
 C \kl{\mu \io \nep^\alpha+1}   
\]
with some $C>0$ (see Lemma \ref{lem;problem;dif-C2}); then we can establish some differential inequality of an energy function, and obtain several important estimates.  
In Section \ref{sec3} we verify global existence in the approximate problem. Then, aided by estimates obtained in Section \ref{sec2}, 
we can see uniform-in-parameter estimates in Section \ref{sec4}.
Finally, in Section \ref{sec5}, we obtain convergences and establish existence of global weak solutions in \eqref{P}.

%%==============================================================%%
%%==============                                  ==============%%
%%======                      Section2                    ======%%
%%====                                                      ====%%
%%==                                                          ==%%
%%====　　　　　   　　    Preliminaries   　      　　　 　====%%
%%======                                                  ======%%
%%==============                                  ==============%%
%%==============================================================%%

\section{An energy time inequality}\label{sec2}

%%%%%%%%%%%%%%%%%%%%%%%%%%%%  (Pe)  %%%%%%%%%%%%%%%%%%%%%%%%%%%%%%
We start by considering the following approximate problem with parameter $\e \in (0,1)$:  
\begin{equation}\label{Pe}
  \begin{cases}
     (n_{\ep})_ t+u_\e\cdot\na n_{\ep} 
     =   \na \cdot (\de (\nep) \na \nep)  
     - \na\cdot\big(\frac{n_{\ep}\chi(\cep)}{1+\e n_{\ep}} \na c_\e\big) + \kappa \nep - \mu \nep^\alpha - \ep \nep^2,
  \\[2mm]
     (c_{\e})_t+u_\e\cdot\na c_\e
     =\D c_\e
     -f(c_\e) \frac{1}{\e}\log\big(1+\e n_{\ep}\big),
  \\[2mm]
     (u_{\e})_t+(Y_\e u_\e\cdot\na)u_\e
     =\D u_\e
     +\na P_\e
     + n_{\ep}\na\Phi +  g_\ep,
     \quad\na\cdot u_\e=0,
  \\[2mm]
     \partial_\nu n_{\ep}|_{\partial\om}
     =\partial_\nu c_\e|_{\partial\om}=0,
     \quad u_\e|_{\partial\om}=0,
  \\[2mm]
     n_{\ep}(\cdot,0)=n_{0\e},\quad
     c_\e(\cdot,0)=c_{0\e},\quad
     u_\e(\cdot,0)=u_{0\e},
  \end{cases}
\end{equation}
%%%%%%%%%%%%%%%%%%%%%%%%%%%%%%%%%%%%%%%%%%%%%%%%%%%%%%%%%%%%%%%%%
where 
\begin{align*}
 \de (s) := D(s+\e) \quad \mbox{for all} \ s\ge 0, 
\quad
  Y_\e=(1+\e A)^{-1}
\end{align*}
and $n_{0\e},c_{0\e}, u_{0\e}, g_\e$ are functions satisfying 
\begin{align}\label{ini;app;nep}
 &n_{0\e} \in C^\infty_0(\Omega),\quad \io n_{0\e} = \io n_0, 
\quad 
 n_{0\e} \to n_0 \ \mbox{in} \ X \ 
 \mbox{as} \ \e \searrow 0,  
\\\label{ini;app;cep}
 &c_{0\e} \in C^\infty_0(\Omega), \quad \lp{\infty}{c_{0\e}}\le \lp{\infty}{c_0}, 
 \quad 
 \sqrt{c_{0\e}} \to \sqrt{c_0} \ \mbox{in} \ L^2(\Omega) \ \mbox{as} \
 \e \searrow 0, 
\\ \label{ini;app;uep}
 &u_{0\e} \in C^\infty_{0,\sigma}(\Omega),
\quad 
 \lp{2}{u_{0\e}}= \lp{2}{u_0}, 
\quad 
 u_{0\e} \to u_0 \ \mbox{in} \ L^2(\Omega) \ \mbox{as} \ \e \searrow 0, 
\\ \notag
 &g_\e \in C^\infty_0(\Omega), \quad 
 \|g_\e\|_{L^2(0,T;L^{\frac 65}(\Omega))} \le 
 \|g\|_{L^2(0,T;L^{\frac 65}(\Omega))} \ \mbox{for all}\ T>0, 
 \\  \label{ini;app;gep}
 &\qquad \qquad \qquad \quad  
 g_\e \to g \ \mbox{in} \ L^2_{\rm loc}([0,\infty); L^{\frac 65}(\Omega)) 
 \ \mbox{as} \ \ep \searrow 0, 
\end{align} 
where 
$A$ is the realization of the Stokes operator in $L^2_\sigma(\Omega)$, 
$X$ is the space 
defined in \eqref{condi;ini1} 
and $C^\infty_{0,\sigma}(\Omega):=\{\varphi\in C^\infty_0(\Omega) \mid 
\nabla \cdot \varphi = 0\}$.  
The first step for the proof of Theorem \ref{mainthm1} is to show global existence of solutions to the approximate problem \eqref{Pe}. 
Now we recall the following result concerned with  
local existence in \eqref{Pe}. 

\smallskip

%
%%%%%%%%%%%%%%%%%%%%%%%%%%  Lemma 2.1  %%%%%%%%%%%%%%%%%%%%%%%%%%
\begin{lem}\label{localsol}
Let $D$ satisfy \eqref{condi;D} with some $\gamma > 0$, $D_2\ge D_1>0$ and $m >0$.  
Assume that $\kappa\in \mathbb{R}$, $\mu\ge 0$, $\alpha >1$, $f\in C^1([0,\infty))$, $\chi\in C^2([0,\infty))$, $\Phi\in C^{1+\beta}(\overline{\om})$ for some $\beta \in (0,1)$ and that $n_{0\ep},c_{0\ep},u_{0\ep},g_\ep$ satisfy \eqref{ini;app;nep}--\eqref{ini;app;gep}.  
  Then for each $\e > 0$ there exist $\etmax \in (0,\infty]$ and 
  uniquely determined functions\/{\rm :}
    \begin{align*}
      n_{\ep}
      &\in C^0(\overline{\om}\times[0,\etmax))
       \cap C^{2,1}(\overline{\om}\times(0,\etmax)),
    \\
       c_\e
      &\in C^0(\overline{\om}\times[0,\etmax))
       \cap C^{2,1}(\overline{\om}\times(0,\etmax))
       \cap L^\infty_{\rm loc}([0,\etmax);W^{1,\infty}(\om)),
    \\
       u_\e
      &\in C^0(\overline{\om}\times[0,\etmax)) 
       \cap C^{2,1}(\overline{\om}\times(0,\etmax)),
    \end{align*}
  which together with some 
  $P_\e\in C^{1,0}(\overline{\om}\times(0,\etmax))$ 
  solve \eqref{Pe} classically. 
  Moreover, $n_{\ep}$ and $c_\e$ are positive 
  and the following alternative holds\/{\rm :} 
  $\etmax=\infty$ or
  \begin{align*}%\label{extension}
    %\notag
     \norm{n_{\e}(\cdot,t)}_{L^{\infty}(\om)}
    + \norm{c_\e(\cdot,t)}_{W^{1,q}(\om)}
    + \norm{A^\theta u_\e(\cdot,t)}_{L^2(\om)}
    \to  \infty
  \end{align*}
as $t\nearrow \etmax$ for all $q>3$ and all $\theta\in (\frac 34,1)$. 
\end{lem}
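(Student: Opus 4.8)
The plan is to obtain the local solution by a Banach fixed-point argument, exploiting that for each fixed $\e>0$ the regularized system \eqref{Pe} is non-degenerate. Indeed, on any bounded set $\{0\le s\le M\}$ the diffusion coefficient $\de(s)=D(s+\e)$ is, by \eqref{condi;D}, bounded above and below by positive constants depending only on $M,\e$ and lies in $C^{1+\gamma}$, while the chemotactic factor $\frac{s}{1+\e s}$ and the reaction factor $\frac1\e\log(1+\e s)$ together with $\chi,f$ are smooth and, on bounded sets, globally Lipschitz. Fixing a short time $T>0$ and a radius $R>0$, I would work in the complete metric space
\[
 X_T:=\big\{(n,c,u)\ \big|\ \sup_{t\in[0,T]}\big(\|n\|_{L^\infty(\Omega)}+\|c\|_{W^{1,q}(\Omega)}+\|A^\theta u\|_{L^2(\Omega)}\big)\le R\big\},
\]
with $q>3$ and $\theta\in(\tfrac34,1)$ as in the statement, and define a map $\mathcal F$ sending $(\bar n,\bar c,\bar u)\in X_T$ to the solution $(n,c,u)$ of the three problems obtained from \eqref{Pe} by freezing the coupling coefficients at the barred functions.

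Second, I would solve and estimate each (now essentially linear) subproblem. For the velocity, the use of the Yosida approximation $Y_\e\bar u=(1+\e A)^{-1}\bar u$ renders the convective term smoothing, so $u$ solves a linear Stokes problem $u_t+Au=\mathcal P[-(Y_\e\bar u\cdot\nabla)\bar u+\bar n\nabla\Phi+g_\e]$ with $\mathcal P$ the Helmholtz projection; applying the analytic semigroup $(e^{-tA})_{t\ge0}$ and the smoothing bounds for $A^\theta e^{-tA}$ controls $\|A^\theta u\|_{L^2(\Omega)}$. For $c$, the equation $c_t+\bar u\cdot\nabla c=\Delta c-f(c)\frac1\e\log(1+\e\bar n)$ is a linear advection-reaction-diffusion problem, which I would solve via the Neumann heat semigroup and estimate in $W^{1,q}(\Omega)$, and then in $L^\infty_{\rm loc}([0,T);W^{1,\infty}(\Omega))$ by the usual $L^p$--$L^q$ smoothing. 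For $n$, freezing $\de(\bar n)$ and $\chi(\bar c)$ turns the first equation into a linear uniformly parabolic equation with Hölder-continuous coefficients, to which I would apply linear parabolic $L^p$/Schauder theory to get a unique solution and control $\|n\|_{L^\infty(\Omega)}$.

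Third, choosing $T$ small (depending on $R$ and $\e$) I would check that $\mathcal F$ maps the ball of radius $R$ into itself and is a contraction, using the Lipschitz dependence of $\de,\chi,f,\frac{s}{1+\e s},\frac1\e\log(1+\e s)$ on their arguments together with the smoothing estimates above; the Banach fixed-point theorem then yields a unique $(\nep,\cep,\uep)$ solving \eqref{Pe} on a maximal interval $[0,\etmax)$. A parabolic bootstrap, using that $D,\chi,f,\Phi$ are smooth and the coefficients are by now known functions, upgrades the fixed point to the asserted $C^{2,1}$ regularity and recovers $P_\e\in C^{1,0}$ through the Helmholtz decomposition. Positivity of $\nep$ and $\cep$ follows from the (strong) maximum principle: the initial data are nonnegative with positive mass, and the zero-order sink terms $-\mu\nep^\alpha-\e\nep^2$ and $-f(\cep)\frac1\e\log(1+\e\nep)$ vanish where $\nep$, respectively $\cep$, vanishes (recall $f(0)=0$). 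Finally the extensibility criterion is the standard contradiction: were $\etmax<\infty$ while $\|\nep(\cdot,t)\|_{L^\infty(\Omega)}+\|\cep(\cdot,t)\|_{W^{1,q}(\Omega)}+\|A^\theta\uep(\cdot,t)\|_{L^2(\Omega)}$ remained bounded as $t\nearrow\etmax$, these bounds would furnish admissible data at some $t_0<\etmax$ from which the local construction extends the solution beyond $\etmax$, contradicting maximality.

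The hard part will be the quasilinear diffusion $\na\cdot(\de(n)\na n)$ in the contraction and regularity steps: since the principal coefficient itself depends on the unknown, controlling $\mathcal F$ requires estimating the dependence of the linear parabolic solution operator on a variable, merely Hölder-continuous, diffusion coefficient $\de(\bar n)$. I would address this either by invoking Amann's theory of quasilinear parabolic systems, which delivers local well-posedness once non-degeneracy and the Hölder structure are in place, or by an additional inner iteration on the diffusion coefficient; granted this, the regularity, positivity and blow-up alternative are routine.
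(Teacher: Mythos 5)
Your proposal is correct and takes essentially the same route as the paper: the paper disposes of this lemma in one line by citing \cite[Lemma 2.1]{Tao-Winkler_2011_non} and \cite[Lemma 2.1]{W-2012}, describing the argument as ``a standard fixed point argument with a parabolic regularity theory,'' which is exactly the Banach fixed-point/semigroup scheme you outline (freezing the couplings, Stokes and Neumann heat semigroup estimates in the $\|n\|_{L^\infty}+\|c\|_{W^{1,q}}+\|A^\theta u\|_{L^2}$ framework, bootstrap to $C^{2,1}$, maximum principle for positivity, and the standard extensibility contradiction). The quasilinear-diffusion difficulty you flag at the end is precisely what those cited lemmas resolve, so invoking them (or Amann's theory, as you suggest) closes that point in the same spirit as the paper.
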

\begin{proof}
Combination of arguments in \cite[Lemma 2.1]{Tao-Winkler_2011_non} 
and \cite[Lemma 2.1]{W-2012}, which is based on 
a standard fixed point argument with a parabolic regularity theory, 
entails this lemma. 
\end{proof}
%
%==========================Lem.2.2.==============================% 

In the following for all $\e\in (0,1)$ 
we denote by $(\nep,\cep,\uep)$ the corresponding solution of \eqref{Pe} given by 
Lemma \ref{localsol} and by $\tmax$ its maximal existence time. 
Then we shall see that $\tmax =\infty$ for all $\ep\in (0,1)$ 
%obtain global existence of the approximate solution 
and useful estimates for the approximate solution.  
We first provide the following lemma which is obtained from the first and second equations in \eqref{Pe}. 

\begin{lem}\label{lem;basic}
For all $\ep\in(0,1)$, 
\begin{align*}
\io \nep\cd \le e^{\kappa t}\io n_0 \quad \mbox{for all} \ t\in (0,\tmax)
\end{align*}
and
\begin{align*}
 \mu \int_0^t\io \nep^\alpha + \ep \int_0^t \io \nep^2 \le e^{\kappa t}\io n_0 + \io n_0 \quad \mbox{for all} \ t \in (0,\tmax)   
\end{align*}
as well as 
\begin{align*}
 \lp{\infty}{\cep\cd} \le \lp{\infty}{c_0} 
\quad \mbox{for all} \ t\in (0,\tmax)
\end{align*}
hold. 
\end{lem}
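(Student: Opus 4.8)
The plan is to establish the three estimates by testing the first two equations of \eqref{Pe} against suitable functions and exploiting the structure of the zero-order terms together with the positivity of $\nep$ and $\cep$ from Lemma \ref{localsol}. I would treat the three assertions in turn, since each rests on a different elementary mechanism, and all computations are justified by the classical regularity provided by Lemma \ref{localsol} on $(0,\tmax)$.

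\textbf{The mass estimate.} First I would integrate the first equation in \eqref{Pe} over $\Omega$. The convective term $u_\e\cdot\na\nep$ integrates to zero after integration by parts because $\na\cdot u_\e = 0$ and $u_\e|_{\pa\om}=0$; the diffusion term and the chemotaxis term both vanish under the no-flux boundary conditions $\pa_\nu\nep|_{\pa\om}=0$ and the corresponding flux condition. This leaves
\[
\frac{d}{dt}\io\nep = \kappa\io\nep - \mu\io\nep^\alpha - \e\io\nep^2.
\]
Discarding the nonpositive last two terms yields $\frac{d}{dt}\io\nep \le \kappa\io\nep$, and Gronwall's inequality together with $\io n_{0\e}=\io n_0$ from \eqref{ini;app;nep} gives $\io\nep\cd \le e^{\kappa t}\io n_0$.

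\textbf{The dissipation estimate.} For the second bound I would return to the same identity but now keep the degradation terms. Rearranging gives $\mu\io\nep^\alpha + \e\io\nep^2 = \kappa\io\nep - \frac{d}{dt}\io\nep$. Integrating in time over $(0,t)$ and using the mass bound just obtained to control $\kappa\int_0^t\io\nep \le \int_0^t e^{\kappa s}\,ds\cdot\kappa\io n_0$, one should arrive at an expression dominated by $e^{\kappa t}\io n_0 + \io n_0$. A small care point is the sign of $\kappa$: when $\kappa\le 0$ the term $\kappa\int_0^t\io\nep$ is nonpositive and is simply dropped, while for $\kappa>0$ one bounds it crudely by $e^{\kappa t}\io n_0$ via the mass estimate; in either case the stated right-hand side emerges after also discarding $-\io\nep\cd\le 0$.

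\textbf{The supremum estimate for $\cep$.} The final assertion is a maximum-principle statement for the second equation. Writing the equation as $(\cep)_t + u_\e\cdot\na\cep - \D\cep = -f(\cep)\frac1\e\log(1+\e\nep)$, I observe that the source term on the right is nonpositive: since $\nep>0$ we have $\frac1\e\log(1+\e\nep)\ge 0$, and $f\ge 0$ on $[0,\infty)$ by \eqref{condi;f}. Thus $\cep$ is a subsolution of the homogeneous advection-diffusion equation, and the parabolic comparison principle with Neumann boundary data gives $\lp{\infty}{\cep\cd}\le\lp{\infty}{c_{0\e}}\le\lp{\infty}{c_0}$, the last inequality by \eqref{ini;app;cep}. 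I expect this step to be the least routine, not because of any deep difficulty but because it relies on a comparison argument rather than a direct integration; the main obstacle throughout is merely bookkeeping the boundary and convective terms correctly, all of which vanish by the structural hypotheses in \eqref{Pe}.
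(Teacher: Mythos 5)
Your proposal is correct and follows essentially the same route as the paper: integrating the first equation of \eqref{Pe} over $\Omega$ (all flux and convection terms vanishing by the boundary conditions and $\na\cdot u_\e=0$) to get the differential inequality for $\io\nep$ and the time-integrated dissipation bound, and applying the maximum principle to the second equation together with \eqref{ini;app;nep} and \eqref{ini;app;cep}. The only difference is one of exposition: you spell out the dissipation estimate and the sign discussion for $\kappa$, which the paper leaves implicit in its two-line proof.
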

\begin{proof} 
%This result is obtained from integrating the first equation in \eqref{P} on $(0,t)$ for all $t\in (0,\tmax)$, and from applying the maximum principle to the second equation. 
Integrating the first equation in \eqref{Pe}, we have 
 \begin{align*}
 \frac d{dt} \io \nep \le \kappa  \io \nep 
%  \quad \mbox{for all} \ t\in (0,\tmax)
 \end{align*}
on $(0,\tmax)$, which together with \eqref{ini;app;nep} means 
the $L^1$-estimate for $\nep$. 
On the other hand, we apply the maximal principle to the second equation in \eqref{Pe} to obtain that 
\[
 \lp{\infty}{\cep\cd} \le \lp{\infty}{c_{0\e}} \quad \mbox{for all} \ 
 t\in (0,\tmax), 
\]
which with \eqref{ini;app;cep} implies this lemma. 
\end{proof}
\begin{remark}
In order to deal with the case that $\kappa>0$ and $\mu=0$, 
estimates for $\nep$ in this lemma are 
local-in-time estimates which are not often used in the study of the chemotaxis system. 
In the case that $\kappa=\mu=0$ or $\mu>0$, from the well-known arguments 
we can establish a uniform-in-time estimate for $\io \nep$. 
\end{remark}

We then establish estimates for the approximate solution, which are useful not only to see $\tmax = \infty$ for each $\ep\in (0,1)$ but also to obtain uniform-in-$\ep$ estimates, by using an energy function 
defined as 
\[
\io \Ne \log \Ne + \frac{1}{2} \io |\na \Psi (\Ce)|^2 + K \io |\Ue|^2
\] 
with some function $\Psi$ and some constant $K>0$, which is the function same as that used in the previous works \cite{Winkler_2016} and \cite{zhang_li}. 
We first give some estimate for derivatives of the first and second summands in the energy function.  

\begin{lem}\label{lem;energy;nc}
There exists $K>0$ such that for any $\e\in (0,1)$, 
\begin{align*}
 \frac d{dt} \kl{\io \nep \log \nep +\frac 12 \io |\na \Psi (\cep)|^2}
 + \frac 1K \kl{\io \frac {\de(\nep)}{\nep}|\na \nep|^2 + 
 \io \frac{|D^2 \cep|^2}{\cep} + \io \frac{|\na \cep|^4}{\cep^3}}
 \\
  + \io \left( \frac \mu 2 \nep^\alpha + \ep \nep^2 - \kappa \delta_{\mu,0}  \nep \right) 
  \log \nep
 \le K \kl{\io |\na \uep|^2 + 1}
\end{align*}
holds on $(0,\tmax)$, where $\Psi(s):= \int_1^s \frac{d\sigma}{\sqrt{h(\sigma)}}$ with $h(s):= \frac{f(s)}{\chi(s)}$, and 
$\delta_{\mu,0} =1$ when $\mu =0$ and $\delta_{\mu,0} =0$ when $\mu>0$.  
\end{lem}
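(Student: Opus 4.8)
The plan is to test the first two equations of \eqref{Pe} against the weights dictated by the functional and to use the three structural conditions on $\chi,f$ to arrange an \emph{exact} cancellation of the chemotactic cross term, after which the surviving quantities assemble into the claimed dissipation. Writing $n=\nep$, $c=\cep$, $u=\uep$ for brevity, I would first multiply the first equation by $1+\log n$ and integrate over $\Omega$ (legitimate since $n$ is smooth and strictly positive by Lemma \ref{localsol}). The convective term vanishes because $\na\cdot u=0$ and $u|_{\pa\Omega}=0$ give $\io(1+\log n)u\cdot\na n=\io u\cdot\na(n\log n)=0$; the flux condition $\de(n)\pa_\nu n=0$ turns the diffusion term into $-\io\frac{\de(n)}{n}|\na n|^2$; the chemotaxis term produces the cross term $\io\frac{\chi(c)}{1+\ep n}\na n\cdot\na c$; and the zero-order part gives $\io(1+\log n)(\kappa n-\mu n^\alpha-\ep n^2)$. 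This is the only place where the diffusion dissipation for $n$ is created.

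Next, setting $\psi:=\Psi(c)$ with $h=f/\chi$, so that $\Psi'=h^{-1/2}$ and $\pa_\nu\psi=0$, I would use $\frac12\frac{d}{dt}\io|\na\psi|^2=-\io\Delta\psi\,\pa_t\psi$ and insert the second equation. The right-hand side splits into a diffusion piece $I_1=-\io\frac{\Delta\psi}{\sqrt h}\Delta c$, a fluid piece $I_2$ (from the convective part $-u\cdot\na\psi$ of $\pa_t\psi$), and a reaction piece $I_3=\frac1\ep\io\frac{\Delta\psi}{\sqrt h}f(c)\log(1+\ep n)$. The key identity is $\frac{\Delta\psi}{\sqrt h}=\frac{\Delta c}{h}-\frac{h'}{2h^2}|\na c|^2$; together with $f/h=\chi$ and one integration by parts (again using $\pa_\nu c=0$), it rewrites $I_3$ as $-\io\frac{\chi(c)}{1+\ep n}\na n\cdot\na c-\frac1\ep\io\big(\chi'+\frac{fh'}{2h^2}\big)|\na c|^2\log(1+\ep n)$. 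Here the decisive algebraic fact is $\chi'+\frac{fh'}{2h^2}=\frac{(\chi f)'}{2f}$, so that $(\chi f)'\ge0$ makes the last integral nonpositive, while the first integral cancels \emph{exactly} the chemotactic cross term from the entropy step. This cancellation, depending only on the choice $\Psi'=h^{-1/2}$, is precisely why the functional is built from $\Psi(c)$.

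For the fluid piece I would integrate by parts once more and use $\na\cdot u=0$, $u|_{\pa\Omega}=0$ to reach the clean form $I_2=-\io(\na\psi\otimes\na\psi):\na u$. Then $|I_2|\le\|\na\psi\|_{L^4(\Omega)}^2\,\|\na u\|_{L^2(\Omega)}$, and since $|\na\psi|^4=|\na c|^4/h(c)^2\le C|\na c|^4/c^3$ — because $h(s)\sim s$ as $s\searrow0$ and $c$ is bounded by Lemma \ref{lem;basic} — Young's inequality gives $|I_2|\le\frac K2\io|\na u|^2+\frac{C}{K}\io\frac{|\na c|^4}{c^3}$, whose second summand is absorbed by the $c$-dissipation once $K$ is large.

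The hard part is the diffusion piece $I_1=-\io\frac{(\Delta c)^2}{h}+\io\frac{h'}{2h^2}|\na c|^2\Delta c$. Here I would combine the pointwise identity $\frac12\Delta|\na c|^2=|D^2c|^2+\na c\cdot\na\Delta c$ with repeated integration by parts and the weighted Reilly-type relation between $\io\frac1h(\Delta c)^2$ and $\io\frac1h|D^2c|^2$; on a general smooth domain the emerging boundary term is dominated by $\int_{\pa\Omega}|\na c|^2$ and hence, after a trace and interpolation estimate, by a small multiple of $\io\frac{|D^2c|^2}{c}$ plus a constant. The two sign hypotheses $h'>0$ and $h''\le0$ are exactly what make the ensuing quadratic form in $(D^2c,\na c\otimes\na c)$ negative definite, yielding $I_1\le-\delta_0\big(\io\frac{|D^2c|^2}{c}+\io\frac{|\na c|^4}{c^3}\big)+C$ for some $\delta_0>0$; this is the most delicate computation and parallels those of \cite{Winkler_2016} and \cite{zhang_li}. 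Finally I would add the two identities: the cross terms cancel, $(\frac1K-1)\io\frac{\de(n)}{n}|\na n|^2\le0$ and the sign-definite term are discarded, and $I_1,I_2$ deliver the $c$-dissipation for $K$ large. After moving the $\log$-weighted zero-order terms to the left, the reaction remainder is $\kappa\io n+\kappa(1-\deltamu)\io n\log n-\mu\io n^\alpha-\frac\mu2\io n^\alpha\log n-\ep\io n^2$; when $\mu>0$ the map $s\mapsto\kappa s+\kappa s\log s-\mu s^\alpha-\frac\mu2 s^\alpha\log s$ is bounded above because $\alpha>1$, and when $\mu=0$ the choice $\deltamu=1$ removes the $n\log n$ term so that only $\kappa\io n$ survives, which is controlled by Lemma \ref{lem;basic}. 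Choosing $K\ge1$ large enough so that the fluid and diffusion estimates close and the constants are absorbed then yields the asserted inequality.
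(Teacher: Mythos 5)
Your proposal is correct in substance and follows essentially the same route as the paper's proof, which is only sketched there by citing \cite[Lemma 3.1]{zhang_li}, \cite[Lemmas 2.6 and 2.8]{Lankeit_2016} and \cite[Lemma 3.1]{Winkler_2016}: testing the first equation by $1+\log\nep$, differentiating $\tfrac12\io|\na\Psi(\cep)|^2$, cancelling the chemotactic cross term exactly through $\Psi'=h^{-1/2}$ together with $(\chi f)'\ge 0$ (your identity $\chi'+\tfrac{fh'}{2h^2}=\tfrac{(\chi f)'}{2f}$ is precisely the structural fact used in those references), absorbing the fluid term via Young's inequality, and running the Winkler-type computation for $I_1$ under $h'>0$, $h''\le 0$. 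The paper's only added ingredient beyond the citations is the pointwise inequality $(\kappa s-\tfrac{\mu}{2}s^{\alpha})\log s\le\kappa\deltamu s\log s+C$, which is exactly your bookkeeping of the logistic terms.

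One point deserves attention, though it is a defect of the statement itself rather than of your argument in particular. In the case $\mu=0$, $\kappa>0$, the surviving term $\kappa\io\nep$ is \emph{not} ``controlled by Lemma \ref{lem;basic}'' in the sense required: that lemma only yields $\io\nep\le e^{\kappa t}\io n_0$, so no $t$- and $\ep$-independent constant $K$ follows from it. Indeed, for nearly spatially homogeneous states with $\nep\approx\kappa/(2\ep)$, $\cep$ constant and $\uep\approx 0$, all dissipation terms vanish, the two log-weighted zero-order contributions cancel, and the left-hand side reduces to approximately $\io\kl{\kappa\nep-\ep\nep^2}=\kappa^2|\Omega|/(4\ep)$, which no fixed $K$ can dominate for all $\ep\in(0,1)$. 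The paper's own one-paragraph proof is silent on this very term, so your attempt matches it; the looseness is also harmless downstream, since Lemmas \ref{lem;energy} and \ref{lem;esti1} invoke the inequality only on finite horizons $(0,\tilt)$, where Lemma \ref{lem;basic} does furnish a $C(T)$-type bound for $\kappa\io\nep$.
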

\begin{proof}
The proof of this lemma 
is similar to those of \cite[Lemma 3.1]{zhang_li} and \cite[Lemmas 2.6 and 2.8]{Lankeit_2016}. 
Aided by arguments in the proof of \cite[Lemma 3.1]{Winkler_2016} and 
noting that 
$(\kappa s -\frac \mu 2 s^\alpha)\log s  \le \kappa\deltamu s\log s + C$ for all $s>0$ with some $C>0$, where $\deltamu$ is the constant defined in the statement of this lemma, 
from straightforward calculations of 
 $\frac d{dt}\io \nep \log \nep$ and $\frac d{dt} \io |\na \Psi (\cep)|^2$ we can verify this lemma. 
\end{proof}

We next calculate a derivative of the third summand $\io |\uep|^2$ in the energy function. 

\begin{lem}\label{lem;energy;u}
There is a constant $C>0$ such that for all $\e\in (0,1)$,  
\begin{align}\label{ineq;badgy}
  \frac 12 \frac d{dt} \io |\uep|^2 + \io |\na \uep|^2 
 &\le C \kl{\lp{\frac 65}{\nep}^2 + \lp{\frac 65}{g_\e}^2 } 
 + \frac 14 \lp{2}{\na \uep}^2
\end{align}
holds on $(0,\tmax)$. 
\end{lem}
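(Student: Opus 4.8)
The plan is to derive \eqref{ineq;badgy} by testing the third equation in \eqref{Pe} with $\uep$ itself, that is, by multiplying by $\uep$ and integrating over $\om$; this is licit since the solutions supplied by Lemma \ref{localsol} are classical. First I would compute each resulting term. The time-derivative term gives $\frac 12 \frac d{dt} \io |\uep|^2$, and the diffusion term, after integrating by parts using the no-slip condition $\uep|_{\pa\om}=0$, yields $-\io |\na \uep|^2$. The pressure term $\io \na P_\e\cdot\uep$ vanishes after integrating by parts because $\na\cdot\uep=0$ and $\uep=0$ on $\pa\om$. The crucial observation is that the convective term $\io (Y_\e\uep\cdot\na)\uep\cdot\uep$ also vanishes: rewriting $(Y_\e\uep\cdot\na)\uep\cdot\uep = \frac 12 (Y_\e\uep)\cdot\na|\uep|^2$ and integrating by parts, this equals $-\frac 12\io (\na\cdot Y_\e\uep)|\uep|^2$ together with a boundary term, and both are zero since $Y_\e=(1+\e A)^{-1}$ maps into the domain of the Stokes operator and hence preserves the divergence-free condition, while $\uep=0$ on $\pa\om$. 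This leaves the identity
\[
 \frac 12 \frac d{dt} \io |\uep|^2 + \io |\na \uep|^2
 = \io \nep\,\na\Phi\cdot\uep + \io g_\e\cdot\uep .
\]

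Next I would estimate the two forcing terms on the right. For both I would invoke the Sobolev embedding $W^{1,2}(\om)\hookrightarrow L^6(\om)$, valid in the three-dimensional setting, which combined with the Poincaré inequality (available because $\uep$ vanishes on $\pa\om$) gives $\lp 6{\uep}\le C_S\lp 2{\na\uep}$; this is precisely why the exponent $\frac 65$, the Hölder conjugate of $6$, appears in the statement. Since $\Phi\in C^{1+\beta}(\ol\om)$ guarantees $\na\Phi\in L^\infty(\om)$, Hölder's inequality with exponents $\frac 65$ and $6$ yields
\[
 \left| \io \nep\,\na\Phi\cdot\uep \right|
 \le \|\na\Phi\|_{L^\infty(\om)}\,\lp{\frac 65}{\nep}\,\lp 6{\uep}
 \le C\,\lp{\frac 65}{\nep}\,\lp 2{\na\uep},
\]
and likewise $\left|\io g_\e\cdot\uep\right|\le C\,\lp{\frac 65}{g_\e}\,\lp 2{\na\uep}$. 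Applying Young's inequality to each product, choosing the parameter so that each contributes $\frac 18\lp 2{\na\uep}^2$, and collecting the two halves produces the claimed constant $\frac 14$ in front of $\lp 2{\na\uep}^2$, together with the terms $C\lp{\frac 65}{\nep}^2$ and $C\lp{\frac 65}{g_\e}^2$.

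I do not expect a genuine obstacle here, as this is the standard $L^2$ energy identity for the Stokes-regularized Navier--Stokes flow; the only point that requires care is the vanishing of the convective term, which hinges on the regularization $Y_\e$ being chosen precisely so that $Y_\e\uep$ stays solenoidal. Everything else is bookkeeping of exponents so that the buoyancy force $\nep\na\Phi$ and the external forcing $g_\e$ pair against $\lp 6{\uep}$ through the Sobolev inequality, thereby leaving a consumable fraction of the dissipation $\io|\na\uep|^2$ on the right-hand side.
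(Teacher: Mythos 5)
Your proposal is correct and follows essentially the same route as the paper: testing the third equation of \eqref{Pe} by $\uep$, observing that the pressure and convective terms vanish, and then combining the H\"older inequality, the embedding $W^{1,2}(\Omega)\hookrightarrow L^6(\Omega)$, and the Young inequality to absorb a fraction of the dissipation. The only difference is that you spell out the vanishing of $\io (Y_\e\uep\cdot\na)\uep\cdot\uep$ via the solenoidality of $Y_\e\uep$, which the paper leaves implicit in its opening identity.
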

\begin{proof}
Testing the third equation of \eqref{Pe} by $\uep$, 
we obtain from the H\"older inequality, the continuous embedding 
$W^{1,2}(\Omega)\hookrightarrow L^6(\Omega)$ and the Young inequality  that 
 \begin{align*}
  \frac 12 \frac d{dt} \io |\uep|^2 + \io |\na \uep|^2 
  & 
  = \io (\nep \na \Phi + g_\e)\cdot \uep   
\\ 
  &\le 
  \kl{\lp{\infty}{\na \Phi}\lp{\frac 65}{\nep} + \lp{\frac 65}{g_\e} }\lp{6}{\uep} 
\\
  & \le 
  C_1 \kl{\lp{\frac 65}{\nep} + \lp{\frac 65}{g_\e} }\lp{2}{\na \uep}    
\\ 
 &\le C_2 \kl{\lp{\frac 65}{\nep}^2 + \lp{\frac 65}{g_\e}^2 } 
 + \frac 14 \lp{2}{\na \uep}^2
\end{align*}  
holds on $(0,\tmax)$ with some $C_1,C_2>0$ which are independent of $\varepsilon$. 
\end{proof} 

In order to derive some differential inequality for the energy function we have to deal with $\lp{\frac 65}{\nep}^2$ in \eqref{ineq;badgy}. 
Now we divide arguments into the cases that 
$m> \frac 23$, $\mu\ge 0$, $\alpha >1$ hold, and that 
$m>0$, $\mu>0$, $\alpha > \frac 43$ hold. 
We first deal with the case that $m>\frac 23$, $\mu \ge 0$, $\alpha >1$ hold. 
In this case we use the diffusion effect to control $\lp{\frac 65}{\nep}$. 
The proof of the following lemma is based on that of \cite[Lemma 3.2]{zhang_li}. 
However, in order to see the following lemma, 
we need the restriction of $m> \frac 23$ 
instead of $m \ge \frac 23$ which is assumed in \cite{zhang_li} (see Remark \ref{remark;zhang-li;young}). 

\begin{lem}\label{lem;problem;dif-C1}
Assume that $m > \frac 23$, $\mu \ge 0$, $\alpha >1$. 
Then for all $T >0$ and all $\eta >0$ 
there is $C(T,\eta)>0$ such that 
\[
 \lp{\frac 65}{\nep }^2 \le 
 \eta\lp{2}{\na (\nep +\e)^{\frac m2}}^2 
 + C(T,\eta)
\]
for all $t\in (0,\tilt)$ and all $\ep \in (0,1)$, where $\tilt:= \min\{T,\tmax\}$. 
\end{lem}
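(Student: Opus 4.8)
The plan is to bound $\lp{\frac 65}{\nep}$ by the diffusion dissipation $\lp{2}{\na(\nep+\e)^{\frac m2}}^2$ that already appears in Lemma \ref{lem;energy;nc}, paying only a constant that is controlled through the mass bound of Lemma \ref{lem;basic}. First I would set $w := (\nep+\e)^{\frac m2}$, which is smooth and positive (since $\nep+\e\ge\e>0$) and hence lies in $W^{1,2}(\om)$. Because $\nep\le\nep+\e=w^{\frac 2m}$, a direct computation of exponents yields $\lp{\frac 65}{\nep}^2 \le \lp{\frac 65}{\nep+\e}^2 = \lp{\frac{12}{5m}}{w}^{\frac 4m}$, so everything reduces to interpolating the single norm $\lp{\frac{12}{5m}}{w}$.

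Next I would apply the Gagliardo--Nirenberg inequality in $\om\subset\Rone^3$ between the gradient norm $\lp{2}{\na w}$ and the low-order norm $\lp{\frac 2m}{w}$,
\[
\lp{\frac{12}{5m}}{w} \le C\,\lp{2}{\na w}^{\theta}\,\lp{\frac 2m}{w}^{1-\theta} + C\,\lp{\frac 2m}{w},
\]
where $\theta$ is fixed by $\frac{5m}{12}=\frac\theta 6+(1-\theta)\frac m2$, that is, $\theta=\frac{m}{6m-2}$; one checks $\theta\in(0,1)$ whenever $m>\frac 25$, so in particular under $m>\frac 23$. The point of using $\frac 2m$ as the low exponent is that this norm is mass-controlled: $\lp{\frac 2m}{w}=\lp{1}{\nep+\e}^{\frac m2}$, and by Lemma \ref{lem;basic} with $\e<1$ one has $\lp{1}{\nep+\e}\le e^{\kappa^+T}\io n_0+|\om|=:M(T)$ with $\kappa^+:=\max\{\kappa,0\}$, uniformly for $t\in(0,\tilt)$ and $\e\in(0,1)$, hence $\lp{\frac 2m}{w}\le M(T)^{\frac m2}$.

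I would then raise the displayed inequality to the power $\frac 4m$, split the two summands via $(a+b)^r\le 2^r(a^r+b^r)$, and absorb every occurrence of $\lp{\frac 2m}{w}$ into a $T$-dependent constant; the only surviving factor is a power of $\lp{2}{\na w}$ with exponent $\frac{4\theta}{m}$. Writing this as $\kl{\lp{2}{\na w}^2}^{\frac{2\theta}{m}}$ and computing $\frac{2\theta}{m}=\frac{1}{3m-1}$, Young's inequality gives
\[
\lp{\frac{12}{5m}}{w}^{\frac 4m} \le \eta\,\lp{2}{\na w}^2 + C(T,\eta),
\]
which is the assertion, since $\lp{2}{\na w}^2=\lp{2}{\na(\nep+\e)^{\frac m2}}^2$. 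The decisive feature is that Young's inequality with an \emph{arbitrarily small} prefactor $\eta$ requires the exponent $\frac{1}{3m-1}$ to be strictly less than $1$, i.e.\ exactly $m>\frac 23$; at the borderline $m=\frac 23$ this exponent equals $1$ and the absorption fails, which is precisely the gap in \cite{zhang_li} recorded in Remark \ref{remark;zhang-li;young}.

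The main obstacle I anticipate is purely technical and concerns the admissible range of exponents in the Gagliardo--Nirenberg step: the target exponent $\frac{12}{5m}$ drops below $1$ once $m>\frac{12}{5}$, and, more seriously, the low exponent $\frac 2m$ drops below $1$ once $m>2$, so the classical statement does not apply verbatim for large $m$. I would resolve this either by invoking a version of Gagliardo--Nirenberg valid for exponents in $(0,1)$, as is standard in the nonlinear-diffusion chemotaxis literature, or by splitting into the regimes $m\le 2$ and $m>2$ and, in the latter, first passing through the genuine Sobolev embedding $W^{1,2}(\om)\hookrightarrow L^6(\om)$ for $w$ before interpolating down with the mass bound. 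In every case the bookkeeping reproduces the same critical exponent $\frac{1}{3m-1}$, so the threshold $m>\frac 23$ is unaffected.
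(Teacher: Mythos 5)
Your proposal is correct and follows essentially the same route as the paper's proof: reduce $\lp{\frac 65}{\nep}^2$ to $\lp{\frac{12}{5m}}{(\nep+\e)^{\frac m2}}^{\frac 4m}$, interpolate via Gagliardo--Nirenberg against the mass-controlled norm $\lp{\frac 2m}{(\nep+\e)^{\frac m2}}$ using Lemma \ref{lem;basic}, and absorb by Young's inequality, the decisive point in both arguments being that the resulting exponent $\frac{2}{3m-1}$ on $\lp{2}{\na(\nep+\e)^{\frac m2}}$ is strictly below $2$ exactly when $m>\frac 23$. Your extra caution about Gagliardo--Nirenberg with exponents below $1$ (relevant for $m>2$) is a point the paper passes over silently, but the variant of the inequality valid for such exponents is standard in this literature, so the argument stands as written.
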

%===========================================================
\begin{proof}
The proof is similar to that of \cite[Lemma 3.2]{zhang_li}. 
Let $T >0$ and put $\tilt := \{T,\tmax\}$. 
Noting from Lemma \ref{lem;basic} that 
\[
 \lp{\frac 2m}{(\nep+\e)^{\frac m2}}= \lp{1}{\nep+\e}^\frac{m}{2}
 \le C_1(T)
\]
for all $t\in (0,\tilt)$ and all $\ep\in (0,1)$ with some $C_1(T)>0$, 
we use the Gagliardo--Nirenberg inequality 
to see that 
\begin{align*}
 \lp{\frac 65}{\nep}^2 
 &\le \lp{\frac 65}{\nep +\e }^2 
 \\
 &= \lp{\frac {12}{5m}}{(\nep + \e)^{\frac m2}}^{\frac 4m} 
\\
 &\le C_{2} \lp{2}{\na (\nep +\e)^{\frac m2}}^{\frac 2{3m-1}}
 \lp{\frac 2m}{(\nep+\e)^{\frac m2}}^{\frac {5m-2}{3m-1}} 
 + \lp{\frac 2m}{(\nep+\e)^{\frac m2}}^{\frac 4m}
\\
 &\le 
 C_3(T) \kl{\lp{2}{\na (\nep +\e)^{\frac m2}}^{\frac 2{3m-1}}+ 1}
\end{align*}
for all $t\in (0,\tilt)$ and all $\e\in (0,1)$ with some $C_{2}>0$ and some $C_3(T)>0$. 
Now, since the condition $m>\frac 23$ implies that $\frac 2{3m-1}< 2$, 
we establish from the Young inequality that 
\begin{align*}
 \lp{\frac 65}{\nep}^2 \le 
 \eta \lp{2}{\na (\nep +\e)^{\frac m2}}^2 
 + C_4(T,\eta)
\end{align*} 
for all $t\in (0,\tilt)$ and all $\e\in(0,1)$ with some $C_4(T,\eta)>0$ for all $\eta>0$. 
\end{proof}
%===========================================
\begin{remark}\label{remark;zhang-li;young}
%In this proof we need the condition $m>\frac 23$ to obtain from 
%the Young inequality that for all $\eta>0$, 
%\begin{align*}
% \lp{2}{\na (\nep +\e)^{\frac m2}}^{\frac 2{3m-1}}  
% \le \eta \lp{2}{\na (\nep +\e)^{\frac m2}}^2 
% + C(\eta)
%\end{align*}
%with some $C(\eta)>0$. If we assume that $m=\frac 23$, since 
%$\frac 2{3m-1}=2$ holds, we could not control the coefficient of $\lp{2}{\na (\nep +\e)^{\frac m2}}^2$ by using the Young inequality. 
In  \cite[(3.6)]{zhang_li}, 
they used the Young inequality as 
\[
\kl{\io (\nep + \ep)^{m-2}|\na \nep|^2}^{\frac 1{3m-1}} 
\le \eta \io  (\nep + \ep)^{m-2}|\na \nep|^2 + C(\eta,m)
\]
with some $C(\eta,m)>0$ for all $\eta>0$ and for $m\ge \frac 23$ 
even though this inequality does not hold when $m=\frac 23$. Indeed, since $\frac 1{3m-1} = 1$ holds when $m=\frac 23$, we could not apply the Young inequality to $\io (\nep + \ep)^{m-2}|\na \nep|^2$. 
Thus we need to assume that $m>\frac  23$ when we use this method.  
\end{remark}

We then consider the case that $m>0$, $\mu >0$, $\alpha >\frac 43$ hold. 
In this case we use the logistic-type damping to control $\lp{\frac 65}{\nep}$. 

\begin{lem}\label{lem;problem;dif-C2}
Assume that $m >0$, $\mu >0$, $\alpha> \frac 43$. 
Then for all 
$T > 0$  
there is $C(T)>0$ such that 
\[
 \lp{\frac 65}{\nep}^2 \le 
 C(T)\kl{\mu \io \nep^\alpha+1}   
\]
holds for all $t\in (0,\tilt)$ and all $\ep\in (0,1)$, 
where $\tilt := \min\{T,\tmax\}$. 
\end{lem}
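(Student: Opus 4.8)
The plan is to bound $\lp{\frac 65}{\nep}$ by interpolating between the $L^1$-norm, which is uniformly controlled on $(0,\tilt)$ by Lemma \ref{lem;basic}, and the $L^\alpha$-norm, and then to exploit the assumption $\alpha>\frac 43$ to convert the resulting power of $\io\nep^\alpha$ into a linear dependence by means of the Young inequality. In contrast to Lemma \ref{lem;problem;dif-C1}, no gradient information is used here; the logistic dissipation alone carries the estimate.

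First I would record that Lemma \ref{lem;basic} gives $\lp{1}{\nep\cd}\le e^{\kappa t}\io n_0 \le C_1(T)$ for all $t\in(0,\tilt)$ and all $\ep\in(0,1)$. Since $1\le \frac 65 < \frac 43 < \alpha$, the exponent $\frac 65$ lies between $1$ and $\alpha$, so the elementary $L^p$-interpolation (H\"older) inequality yields
\[
 \lp{\frac 65}{\nep} \le \lp{1}{\nep}^{\,1-\theta}\,\lp{\alpha}{\nep}^{\,\theta},
 \qquad \theta=\frac{\alpha}{6(\alpha-1)},
\]
where $\theta$ is fixed by $\frac 56=(1-\theta)+\frac{\theta}{\alpha}$.

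Next I would square this identity and insert the $L^1$-bound, which gives
\[
 \lp{\frac 65}{\nep}^2 \le C_1(T)^{2(1-\theta)}\kl{\io \nep^\alpha}^{\frac{2\theta}{\alpha}}
 = C_2(T)\kl{\io \nep^\alpha}^{\frac{1}{3(\alpha-1)}},
\]
since a direct computation yields $\frac{2\theta}{\alpha}=\frac{1}{3(\alpha-1)}$. The decisive point is that $\alpha>\frac 43$ is exactly equivalent to $\frac{1}{3(\alpha-1)}<1$; this is the only place where the hypothesis on $\alpha$ is used, and it marks the threshold past which the method breaks down. Finally, using that $\mu>0$ is fixed and setting $s:=\frac{1}{3(\alpha-1)}\in(0,1)$, I would apply the elementary bound $X^s\le X+1$ for $X\ge 0$ with $X=\mu\io\nep^\alpha$, so that
\[
 \kl{\io \nep^\alpha}^{s}=\mu^{-s}\kl{\mu\io \nep^\alpha}^{s}\le \mu^{-s}\kl{\mu \io \nep^\alpha+1},
\]
and absorbing the constant $\mu^{-s}$ into $C(T)$ produces the claimed inequality.

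The computation is essentially routine, so the only subtlety—hence what I would regard as the main obstacle—is the bookkeeping of the interpolation exponent needed to confirm that the power of $\io\nep^\alpha$ does not exceed $1$; this is precisely what $\alpha>\frac 43$ guarantees, and it explains why the borderline case $\alpha=\frac 43$ must be excluded.
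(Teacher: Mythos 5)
Your proof is correct and follows essentially the same route as the paper: interpolate $\lp{\frac 65}{\nep}$ between $\lp{1}{\nep}$ (bounded via Lemma \ref{lem;basic}) and $\lp{\alpha}{\nep}$, then use a Young-type inequality, with $\alpha>\frac 43$ entering exactly to ensure the resulting power of $\io \nep^\alpha$ (namely $\frac{1}{3(\alpha-1)}$, which equals the paper's exponent $\frac{2\alpha}{6(\alpha-1)}$ divided by $\alpha$) is strictly less than $1$. Your version merely makes the exponent bookkeeping and the elementary bound $X^s\le X+1$ explicit where the paper compresses them into one line.
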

\begin{proof}
Let $T >0$ and put $\tilt :=\min\{T,\tmax\}$. 
We use an interpolation inequality and the Young inequality to obtain that 
\begin{align*}
\lp{\frac 65}{\nep}^2 \le 
\lp{\alpha}{\nep}^{\frac{2\alpha}{6(\alpha-1)}}\lp{1}{\nep}^{\frac{5\alpha-6}{6(\alpha-1)}}
\le C_1(T)\kl{\mu \lp{\alpha}{\nep}^\alpha+1}
\end{align*}
on $(0,\tilt)$ 
with some $C_1(T)>0$, 
which with Lemma \ref{lem;basic} and the fact $\frac{2\alpha}{6(\alpha-1)} < \alpha$ (from $\alpha > \frac 43$) implies this lemma. 
\end{proof}

The lemmas obtained in this section yield the following estimate for a derivative of the energy function. 

\begin{lem}\label{lem;energy}
Let $\Psi$ and $K>0$ be given in Lemma \ref{lem;energy;nc} 
and assume that \eqref{condi;alpha,m} holds. 
Then for all $T >0$ there is $C(T)>0$ such that for any $\e\in (0,1)$,  
\begin{align*}
 &\frac{d}{dt} \kl{\io \nep \log \nep +\frac 12 \io |\na \Psi (\cep)|^2 
 + K \io |\uep|^2 }
 + \io \left( \frac \mu 2 \nep^\alpha + \ep \nep^2 \right)\log \nep
 \\ 
 &\quad \, 
 + \frac 1{2K} \kl{
 \io \frac{\de(\nep)}{\nep}  |\na \nep|^2+ 
 \io \frac{|D^2 \cep|^2}{\cep} + \io \frac{|\na \cep|^4}{\cep^3}+ 
 \io |\na \uep|^2}
 \\
 &\le C(T)\kl{1+\mu \io \nep^\alpha + \lp{\frac 65}{g_\e\cd }^2} + \kappa \deltamu \io \nep\log \nep  
\end{align*}
holds on $(0,\tilt)$ with $\tilt :=\min\{ T,\tmax\}$. 
\end{lem}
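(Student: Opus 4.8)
The plan is to assemble the three differential inequalities already proved---Lemma~\ref{lem;energy;nc}, Lemma~\ref{lem;energy;u}, and the case-dependent control of $\lp{\frac65}{\nep}^2$ from Lemma~\ref{lem;problem;dif-C1} or Lemma~\ref{lem;problem;dif-C2}---into a single estimate for the full energy $\io\nep\log\nep+\frac12\io|\na\Psi(\cep)|^2+K\io|\uep|^2$. Fix the constant $K>0$ of Lemma~\ref{lem;energy;nc}, and assume without loss of generality that $K\ge 1$. First I would multiply the inequality of Lemma~\ref{lem;energy;u} by $2K$ and add it to that of Lemma~\ref{lem;energy;nc}. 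Then the term $\frac14\lp{2}{\na\uep}^2$ on the right of Lemma~\ref{lem;energy;u} becomes $\frac K2\io|\na\uep|^2$ and is absorbed into the $2K\io|\na\uep|^2$ on the left, while the $K\io|\na\uep|^2$ on the right of Lemma~\ref{lem;energy;nc} is likewise moved to the left; the net coefficient of $\io|\na\uep|^2$ on the left is $2K-\frac K2-K=\frac K2$. This produces, on $(0,\tilt)$,
\begin{align*}
\frac{d}{dt}\kl{\io\nep\log\nep+\tfrac12\io|\na\Psi(\cep)|^2+K\io|\uep|^2}
&+\frac1K\kl{\io\frac{\de(\nep)}{\nep}|\na\nep|^2+\io\frac{|D^2\cep|^2}{\cep}+\io\frac{|\na\cep|^4}{\cep^3}}\\
&+\io\kl{\tfrac\mu2\nep^\alpha+\ep\nep^2-\kappa\deltamu\nep}\log\nep+\frac K2\io|\na\uep|^2\\
&\le K+2KC\lp{\frac65}{\nep}^2+2KC\lp{\frac65}{g_\e}^2,
\end{align*}
where $C$ is the constant of Lemma~\ref{lem;energy;u}.

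The remaining work is to control $2KC\lp{\frac65}{\nep}^2$, which splits according to \eqref{condi;alpha,m}. Under $m>\frac23$ I would invoke Lemma~\ref{lem;problem;dif-C1} to write $\lp{\frac65}{\nep}^2\le\eta\lp{2}{\na(\nep+\e)^{\frac m2}}^2+C(T,\eta)$. The crucial point is that $\na(\nep+\e)^{\frac m2}=\frac m2(\nep+\e)^{\frac m2-1}\na\nep$, together with the lower bound $\de(\nep)=D(\nep+\e)\ge D_1(\nep+\e)^{m-1}$ and the elementary estimate $\frac{(\nep+\e)^{m-1}}{\nep}\ge(\nep+\e)^{m-2}$ (valid since $\nep\le\nep+\e$), yields
\[
\io\frac{\de(\nep)}{\nep}|\na\nep|^2\ge D_1\io(\nep+\e)^{m-2}|\na\nep|^2=\frac{4D_1}{m^2}\lp{2}{\na(\nep+\e)^{\frac m2}}^2.
\]
Choosing $\eta$ so small that $2KC\eta\cdot\frac{m^2}{4D_1}\le\frac1{2K}$ lets the resulting multiple of $\lp{2}{\na(\nep+\e)^{\frac m2}}^2$ be absorbed into the diffusion dissipation, leaving $\frac1{2K}\io\frac{\de(\nep)}{\nep}|\na\nep|^2$ on the left and contributing only the fixed constant $2KC\,C(T,\eta)$ to the right. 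Under $m>0$, $\mu>0$, $\alpha>\frac43$ the situation is simpler: Lemma~\ref{lem;problem;dif-C2} gives $\lp{\frac65}{\nep}^2\le C(T)(\mu\io\nep^\alpha+1)$ directly, so $2KC\lp{\frac65}{\nep}^2$ passes into the right-hand side in the required form with no absorption into the dissipation.

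Finally, since $K\ge 1$ we have $\frac1K\ge\frac1{2K}$ and $\frac K2\ge\frac1{2K}$, so the coefficients of the four nonnegative dissipation terms on the left may all be lowered to $\frac1{2K}$; moving $\kappa\deltamu\io\nep\log\nep$ to the right and collecting the fixed constants into a single $C(T)>0$ (using $\mu\io\nep^\alpha\ge0$) gives the claimed inequality in both cases. The only genuinely delicate step is the constant bookkeeping in the $m>\frac23$ case, namely verifying that the Gagliardo--Nirenberg output $\lp{2}{\na(\nep+\e)^{\frac m2}}^2$ is dominated by the diffusion dissipation $\io\frac{\de(\nep)}{\nep}|\na\nep|^2$ so that the small-$\eta$ absorption is legitimate; everything else is a routine summation of the preceding lemmas.
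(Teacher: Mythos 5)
Your proof is correct and follows essentially the same route as the paper: the paper likewise combines Lemma \ref{lem;energy;nc} with $2K$ times Lemma \ref{lem;energy;u}, controls $\lp{\frac 65}{\nep}^2$ via Lemma \ref{lem;problem;dif-C1} or \ref{lem;problem;dif-C2} according to the case, and absorbs the resulting small multiple of $\lp{2}{\na(\nep+\e)^{\frac m2}}^2$ into the dissipation term $\io \frac{\de(\nep)}{\nep}|\na\nep|^2$ using $\de(\nep)/\nep \ge D_1(\nep+\e)^{m-2}$. The only difference is bookkeeping order (the paper substitutes the case lemmas into Lemma \ref{lem;energy;u} before adding Lemma \ref{lem;energy;nc}), and your explicit normalization $K\ge 1$ is a legitimate, indeed slightly more careful, handling of a point the paper leaves implicit.
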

\begin{proof}
Let $T>0$ and put $\tilt :=\min\{ T ,\tmax\}$. 
Aided by Lemmas \ref{lem;energy;u}, \ref{lem;problem;dif-C1} and \ref{lem;problem;dif-C2}, 
we can obtain that 
 \begin{align}\label{ineq;dif;u}\notag
  \frac 12 \frac d{dt} \io |\uep|^2 + \frac 34 \io |\na \uep|^2 
  &\le \frac{D_1}{4K^2}\io (\nep +\e)^{m-2}|\na \nep|^2 
  \\
 &\quad\, + C_1(T)\kl{\lp{\frac 65}{g_\e}^2+\mu \io \nep^\alpha +1}
\end{align}  
for all $t\in (0,\tilt)$ and 
for all $\e\in(0,1)$ with some $C_1(T) >0$. 
Thus a combination of Lemma \ref{lem;energy;nc} and 
\eqref{ineq;dif;u} derives this lemma. 
\end{proof}

In the end of this section we provide the following uniform-in-$\ep$ estimates for the approximate solution which will be used later. 

\begin{lem}\label{lem;esti1}
Let $\Psi$ be given in Lemma \ref{lem;energy;nc} 
and assume that  \eqref{condi;alpha,m} holds. 
Then for all $T >0$ there exists $C(T)>0$ such that 
\begin{align*}%\label{esti;nlogetal}
 \io \nep \log \nep + \io |\na \Psi (\cep)|^2 
 +  \io |\uep|^2 \le C(T) 
\quad \mbox{for all} \ t\in (0,\tilt)
\end{align*}
and 
\begin{align}\label{esti;spacetimeintegral}
% \iio (\nep +\e)^{m-2} |\na \nep|^2 
 \iiio \frac{\de(\nep)}{\nep} |\na \nep|^2 
 + \iiio \left( \mu \nep^\alpha + \ep \nep^2 \right) \log \nep 
  \le C(T) 
\end{align}
as well as 
\begin{align*}
  \iiio \frac{|D^2 \cep|^2}{\cep} 
 + \iiio \frac{|\na \cep|^4}{\cep^3}
 + \iiio |\na \uep|^2 
  \le C(T) 
\end{align*}
hold for all $\e\in (0,1)$ with $\tilt:= \min\{T,\tmax\}$. 
\end{lem} 
\begin{proof}
Let $T>0$ and put $\tilt := \min\{T,\tmax\}$, 
and let $K$ be given in Lemma \ref{lem;energy;nc}. 
Putting 
\[
 \ye (t) := \io \nep\cd \log \nep\cd +\frac 12 \io |\na \Psi (\cep\cd)|^2 
 + K \io |\uep\cd|^2  
\]
and 
\begin{align*}
 \ze (t) &:= 
  \io \frac{\de(\nep \cd)}{\nep\cd} |\na \nep\cd|^2+ 
 \io \frac{|D^2 \cep\cd|^2}{\cep\cd} + \io \frac{|\na \cep\cd|^4}{\cep^3\cd}+ 
 \io |\na \uep\cd|^2
\\
& \quad\, + 2K \io \kl{\frac \mu 2 \nep^\alpha\cd + \ep \nep^2\cd }\log \nep\cd
\end{align*}
for $t\in (0,\tilt)$,  
we obtain from Lemma \ref{lem;energy} that 
\begin{align}\notag
\ye'(t) &+ \frac 1{2K}\ze(t) -\kappa \deltamu \io \nep\cd \log \nep\cd 
\\ \label{ineq;dif;yz}
& \le C_1(T) \kl{1+ \mu \io \nep^\alpha\cd +\lp{\frac 65}{g_\e \cd}^2}
\quad \mbox{for all} \ t\in (0,\tilt)
\end{align}
with some $C_1(T)>0$.
Then, in order to derive a differential inequality of $\ye$, 
we shall show that 
\[
 C \ye (t) + \kappa\deltamu \io \nep\cd \log \nep \cd \le \frac 1{2K} \ze (t) +\widetilde{C} 
  \quad \mbox{for all} \ t\in (0,\tilt)
\]
with some $C,\widetilde{C}>0$. 
Now, in the case that $m>\frac 23$, the inequality 
%$s\log s \le \frac 3{3m-2} s^{m+\frac 13}$ 
%for all $s>0$  
\[
 s\log s \le \frac 3{3m-2} s^{m+\frac 13} 
\quad \mbox{for all} \ s>0 
\]
and the Gagliardo--Nirenberg inequality 
entail that 
\begin{align*}
 (\kappa\deltamu &+1)\io \Ne \log \Ne 
\\
 & \le \frac{3(\kappa\deltamu +1)}{3m-2} \io (\Ne+\e)^{m+\frac 13}
\\
 & \le C_{2} \kl{\lp{2}{\na (\Ne +\e)^\frac m2}^{\frac{2(3m-2)}{3m-1}}\lp{\frac 2m}{(\Ne+\e)^\frac m2}^\frac{2(6m-1)}{3m(3m-1)} 
 + \lp{\frac 2m}{(\Ne+\e)^\frac m2}^\frac{2(3m+1)}{3m}}
\\
 &\le \frac{D_1}{2K} \io (\Ne+\e)^{m-2}|\na \Ne|^2 + C_3(T)
 \\
 & 
 \le \frac 1{2K}\io \frac{\de(\nep)}{\nep}|\na \nep|^2+C_3(T)
\end{align*}
for all $\e\in (0,1)$ with some $C_{2}, C_3(T) >0$. 
On the other hand, in the case that $\mu >0$ (which means that $\deltamu =0$ holds), 
the inequality 
\[
  s\log s \le  s^\alpha \log s \quad \mbox{for all} \ s>0
\]
enables us to see that 
\[
 \io \nep \log \nep 
\le 
 \io \nep^\alpha \log \nep. 
\]
Moreover, by putting 
$M:= \min \{ h'(s)\mid s\in [0,\lp{\infty}{c_0}]\} > 0$  
and using the inequality 
\[
 h(s) \ge M s \quad \mbox{for all} \ s \in [0,\lp{\infty}{c_0}]
\]
(from the facts that $h\in C^1([0,\infty))$, $h'>0$ on $[0,\lp{\infty}{c_0}]$ and $h(0)=0$),  
we can see that 
\begin{align*}
 \frac 12 \io |\na \Psi (\Ce)|^2 
 &=  \frac 12 \io \frac{|\na \Ce|^2}{h(\Ce)} 
\\
 &\le \frac 14 \io \frac{|\na \Ce|^4}{\Ce^3} 
 + \frac 14 \io \frac{\Ce^3}{h^2(\Ce)}
\\
 &\le \frac 14 \io \frac{|\na \Ce|^4}{\Ce^3} + 
  \frac{\lp{\infty}{c_0}|\Omega|}{4M^2} 
\end{align*}
holds for all $t\in(0,\tilt)$. 
Therefore the Poincar\'e inequality 
\[
 K \io |\Ue|^2 \le C_4 \io |\na \Ue|^2 
\]
with some $C_4>0$ and the inequality $-\frac 1e \le \ep s\log s \le \ep s^2 \log s$ for all $s>0$ and all $\ep\in (0,1)$ 
yield that 
 \begin{align}\label{ineq;rela;yz}
 C_5(T) \ye (t) + \kappa\deltamu \io \nep\cd \log \nep \cd \le \frac 1{2K} \ze (t) + C_6(T) 
  \quad \mbox{for all} \ t\in (0,\tilt)
\end{align}
with some $C_5(T),C_6(T) >0$. 
Since \eqref{ineq;dif;yz} and \eqref{ineq;rela;yz} derive that 
\begin{align*}%\label{ineq;dif;yz}
\ye'(t) + C_7(T)\ye(t) \le C_8(T) \kl{1+ \mu \io \nep^\alpha\cd +\lp{\frac 65}{g_\e \cd}^2}
\end{align*}
for all $t\in (0,\tilt)$ with some $C_7(T),C_8(T)>0$,
the existence of $C_{9}(T)>0$ satisfying  
\[
\int_0^{\tilt}\io \nep^\alpha + \int_0^{\tilt} \lp{\frac 65}{g_\e}^2 \le C_{9}(T),
\]
which is obtained from \eqref{condi;ini2} and \eqref{ini;app;gep} as well as Lemma \ref{lem;basic}, 
means that this lemma holds. 
\end{proof}

\section{Global existence for the regularized problem \eqref{Pe}}\label{sec3}

In this section we show global existence in the approximate problem by using the estimates obtained in Lemma \ref{lem;esti1}. 

\begin{lem}\label{lem;ge}
Assume that \eqref{condi;alpha,m} holds. 
Then for all $\e\in (0,1)$, $\tmax = \infty$ holds. 
\end{lem}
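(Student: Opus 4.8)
The plan is to argue by contradiction through the extensibility criterion of Lemma \ref{localsol}: if $\tmax<\infty$, then
\[
\norm{\nep\cd}_{L^\infty(\om)} + \norm{\cep\cd}_{W^{1,q}(\om)} + \norm{A^\theta\uep\cd}_{L^2(\om)} \to \infty \quad\text{as } t\nearrow\tmax
\]
for all $q>3$ and $\theta\in(\tfrac34,1)$, so it suffices to bound each of these three quantities uniformly on $(0,\tmax)$. The crucial observation is that $\ep$ is now \emph{fixed}, so every regularizing device in \eqref{Pe} may be exploited freely without tracking its dependence on $\ep$: the chemotactic sensitivity obeys $\frac{s}{1+\ep s}\le\frac1\ep$, the signal term satisfies $\frac1\ep\log(1+\ep s)\le s$, and by Lemma \ref{lem;basic} $\cep$ is bounded in $L^\infty$, whence $\chi(\cep)$ and $f(\cep)$ stay bounded along the trajectory. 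These facts remove the criticality that makes the limit system \eqref{P} delicate and reduce the task to standard parabolic and Stokes regularity.

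First I would record the base estimates. Applying Lemma \ref{lem;esti1} with any $T>\tmax$ (so that $\tilt=\tmax$) yields, on $(0,\tmax)$, the bounds $\uep\in L^\infty((0,\tmax);L^2)\cap L^2((0,\tmax);W^{1,2})$, $\io\nep\log\nep\le C$, the space--time diffusion bound $\iiio\frac{\de(\nep)}{\nep}|\na\nep|^2\le C$, and---since $\cep\le\lp{\infty}{c_0}$---the honest gradient bound $\iiio|\na\cep|^4\le\lp{\infty}{c_0}^3\,\iiio\frac{|\na\cep|^4}{\cep^3}\le C$, i.e.\ $\na\cep\in L^4((0,\tmax)\times\om)$. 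These form the starting point of a bootstrap.

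Next I would raise the integrability of $\nep$ by an $L^p$-testing argument: testing the first equation of \eqref{Pe} by $p\nep^{p-1}$, the convection term drops out (since $\na\cdot\uep=0$ and $\uep|_{\pa\om}=0$), the diffusion term produces a coercive contribution controlled below via $\de(\nep)\ge D_1(\nep+\ep)^{m-1}$, the chemotactic term is absorbed by Young's inequality using the bounded sensitivity together with the $L^4$-bound on $\na\cep$, and the dissipative term $-\ep\nep^2$ supplies extra absorption; this gives $\nep\in L^\infty((0,\tmax);L^p)$ for every finite $p$. Feeding this into the second equation---whose source is dominated by $C\nep$---via smoothing estimates for the Neumann heat semigroup yields $\cep\in L^\infty((0,\tmax);W^{1,q})$ for all $q>3$, while feeding it into the third equation, whose forcing $\nep\na\Phi+g_\ep$ now lies in a good space and whose convective term $(Y_\ep\uep\cdot\na)\uep$ is tamed by the smoothing of $Y_\ep=(1+\ep A)^{-1}$, the fractional-power Stokes estimates give $A^\theta\uep\in L^\infty((0,\tmax);L^2)$ for $\theta\in(\tfrac34,1)$. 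Finally, with $\na\cep$ and $\uep$ controlled in high-integrability spaces, a Moser-type iteration on the first equation upgrades $\nep$ to $L^\infty((0,\tmax);L^\infty)$. All three norms in the extensibility criterion then remain bounded as $t\nearrow\tmax$, a contradiction, so $\tmax=\infty$.

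The hard part will be the coupling: each step of the cascade requires regularity of one component that is only produced by controlling another, so the estimates must be closed in the correct order, starting from the first genuine integrability gain for $\nep$. This closure succeeds precisely because the $\ep$-regularization makes both the chemotactic sensitivity and the signal production bounded; once the $p>1$ bound on $\nep$ is secured, the remaining steps are routine.
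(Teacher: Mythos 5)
Your proposal follows essentially the same route as the paper's proof: contradiction via the extensibility criterion of Lemma \ref{localsol}, the base bounds from Lemma \ref{lem;esti1} applied with $T=\tmax$ (giving $\io|\uep|^2\le C$ and $\int_0^{\tmax}\!\io|\na\cep|^4\le C$), an $L^p$ testing argument for $\nep$ exploiting the regularized sensitivity $\frac{s}{1+\ep s}\le\frac1\ep$, then bounds on $\|\na\cep\|_{L^6(\Omega)}$ and $\|A^\theta\uep\|_{L^2(\Omega)}$ followed by a Moser--Alikakos iteration to reach $L^\infty$. The only caveat is your claim that a single testing step yields $\nep\in L^\infty((0,\tmax);L^p)$ for \emph{every} finite $p$: the Young-inequality absorption against the dissipative term $-\ep\nep^{p+1}$ caps the admissible exponent (which is why the paper fixes $p=\min\{3+m,4\}>3$), but this is immaterial since only one exponent $p>3$ is needed before the iteration closes the argument.
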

\begin{proof}
Arguments similar to those in the proof of 
\cite[Lemma 2.9]{KM} enable us to see this lemma; thus we only write a short proof. 
Assume that $\tmax < \infty$.  
We can obtain from Lemma \ref{lem;esti1} with $T =\tmax$ that 
$\io |\Ue|^2 \le C_1$ 
for all $t\in (0,\tmax)$ 
%\begin{align*}
%\io |\Ue|^2 \le C_1 
%\quad \mbox{for all} \ t\in (0,\tmax) 
%\end{align*}
and 
\begin{align*}
 \int_0^{\tmax}\io|\na \Ce|^4 \le \lp{\infty}{c_0}^3\int_0^{\tmax} \io \frac{|\na \Ce|^4}{\Ce^3} \le C_2  
\end{align*}
with some $C_1,C_2>0$ (which are independent of $\varepsilon$). 
Now we let $p:= \min\{3+m, 4\}>3$. 
Then, considering $\frac d{dt}\io (\Ne+\e)^p$ together with the inequality $\frac{s}{1+\e s} \le \frac 1\e$ for all $s>0$, 
we can verify an $L^p(\Omega\times (0,\tmax))$-estimate for $\nep$.  
Then, through boundedness of $\sup_{t\in (0,\tmax)}\|\uep \cd\|_{D(A^\theta)}$ with some $\theta\in (\frac 34,1)$ and $\sup_{t\in (0,\tmax)}\|\na \cep \cd\|_{L^6(\Omega)}$, 
a Moser--Alikakos-type procedure (see the proof of \cite[Lemma A.1]{Tao-Winkler_2012}) enables us to have an $L^\infty(\Omega\times (0,\infty))$-estimate for $\nep$, which with the extensibility criterion implies  that $\tmax = \infty$ for all $\ep\in (0,1)$. 
\end{proof}

\section{Further $\bm{\e}$-independent estimates for \eqref{Pe}}\label{sec4}

In this section we derive uniform-in-$\ep$ estimates for the approximate solution which will be used in Section \ref{sec5}. 
We first give the following estimates. 

\begin{lem}\label{lem;esti2-1}
Assume that \eqref{condi;alpha,m} holds.  
Then for all $T>0$ there exists $C(T)>0$ such that 
\begin{align}\label{esti;lem2-1}
 \iio |\na (\Ne+\e)^\frac m2  |^2
 + \iio |\na \Ce|^4
 + \iio |\Ue|^\frac{10}{3} 
 \le C(T)
\end{align}
holds for all $\ep\in (0,1)$. 
\end{lem}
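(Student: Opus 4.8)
The plan is to harvest the three space--time bounds in \eqref{esti;lem2-1} from the uniform estimates already collected in Lemma \ref{lem;esti1}, using the Gagliardo--Nirenberg inequality to convert them into the stated norms. First I would fix $T>0$, write $\tilt:=\min\{T,\tmax\}$, and recall from Lemma \ref{lem;ge} that in fact $\tmax=\infty$, so $\tilt=T$; thus all the bounds of Lemma \ref{lem;esti1} are available on the full time interval $(0,T)$. The first summand $\iio|\na(\Ne+\e)^{m/2}|^2$ is essentially immediate: by \eqref{condi;D} we have $\de(\nep)=D(\nep+\e)\ge D_1(\nep+\e)^{m-1}$, so
\[
 \io |\na(\nep+\e)^{\frac m2}|^2 = \frac{m^2}{4}\io (\nep+\e)^{m-2}|\na \nep|^2
 \le \frac{m^2}{4D_1}\io \frac{\de(\nep)}{\nep+\e}(\nep+\e)|\,(\nep+\e)^{-1}|\,|\na\nep|^2,
\]
and after integrating in time this is controlled by $\iiio \frac{\de(\nep)}{\nep}|\na\nep|^2 \le C(T)$ from \eqref{esti;spacetimeintegral} (replacing $\nep$ by $\nep+\e$ in the denominator only improves the bound). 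The second summand $\iio|\na\Ce|^4$ follows at once from Lemma \ref{lem;basic}, which gives $\lp{\infty}{\cep\cd}\le\lp{\infty}{c_0}$, so that
\[
 \iio |\na\Ce|^4 \le \lp{\infty}{c_0}^3 \iiio \frac{|\na\Ce|^4}{\Ce^3} \le C(T)
\]
by the third space--time bound of Lemma \ref{lem;esti1}.

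The only genuinely nontrivial summand is the last, $\iio|\Ue|^{10/3}$, and this is the step I expect to be the main obstacle. The natural route is the three-dimensional Gagliardo--Nirenberg interpolation
\[
 \lp{\frac{10}{3}}{\uep}^{\frac{10}{3}} \le C\,\lp{2}{\na\uep}^2\,\lp{2}{\uep}^{\frac 43},
\]
whose exponents are exactly tuned so that the gradient appears to the power $2$ (the scaling-critical choice in $\RN$ with $N=3$). Integrating in time, I would then estimate
\[
 \iio |\uep|^{\frac{10}{3}} \le C \Big(\sup_{t\in(0,T)} \lp{2}{\uep\cd}^{\frac 43}\Big)\iio |\na\uep|^2,
\]
and both factors on the right are bounded by $C(T)$: the pointwise-in-time bound $\io|\uep|^2\le C(T)$ and the space--time bound $\iiio|\na\uep|^2\le C(T)$ are precisely the two conclusions of Lemma \ref{lem;esti1} concerning $\uep$.

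Assembling the three estimates gives \eqref{esti;lem2-1} with a constant $C(T)$ depending only on $T$ (through the constants in Lemma \ref{lem;esti1}) and on the fixed structural data $D_1,m,\lp{\infty}{c_0},|\Omega|$, and in particular independent of $\ep\in(0,1)$, as required. The delicate point throughout is bookkeeping of the $\ep$-dependence: every intermediate constant must be traced back to the uniform-in-$\ep$ bounds of Lemma \ref{lem;esti1} and to the structural constants in \eqref{condi;D}, never to quantities like $\tmax$ or the approximation parameter itself; since Lemma \ref{lem;esti1} already delivers its bounds uniformly in $\ep$, this is a matter of care rather than of new ideas.
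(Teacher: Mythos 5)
Your proposal is correct and follows essentially the same route as the paper: the first term is reduced to the bound on $\iiio \frac{\de(\nep)}{\nep}|\na\nep|^2$ via \eqref{condi;D}, the second follows from the $L^\infty$-bound on $\cep$ together with the weighted gradient estimate, and the third from the Gagliardo--Nirenberg interpolation $\lp{\frac{10}{3}}{\uep}^{\frac{10}{3}}\le C\lp{2}{\na\uep}^2\lp{2}{\uep}^{\frac43}$ combined with the two $\uep$-bounds of Lemma \ref{lem;esti1}. The only cosmetic difference is that you invoke the trace-zero form of Gagliardo--Nirenberg (legitimate since $\uep\in W^{1,2}_0(\Omega)$), whereas the paper keeps the additive lower-order term $\lp{2}{\Ue}^{\frac{10}{3}}$; both yield the same conclusion.
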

%===========================
\begin{proof}
The proof is based on arguments in the proof of \cite[Lemma 5.1]{zhang_li}. 
Thus we only write a short proof. 
Let $T>0$. 
Due to \eqref{esti;spacetimeintegral}, 
we can find $C_1(T)>0$ such that  
\begin{align*}%\label{esti;nanm/2}
 \iio |\na (\Ne+\e)^\frac m2|^2 
 & = \frac{m^2}{4}
 \iio (\Ne+\e)^{m-2}|\na \Ne|^2 
\\
 &\le 
 \frac{m^2}{4D_1}\iio \frac{\de(\nep)}{\nep}|\na \nep|^2 
 \\
 & 
 \le C_1(T)
\end{align*} 
for all $\ep\in (0,1)$. 
On the other hand, 
in light of Lemma \ref{lem;basic} 
and the Gagliardo--Nirenberg inequality, 
we infer from Lemma \ref{lem;esti1} that 
\begin{align*}
 \iio |\na \Ce|^4 & \le \lp{\infty}{c_0}^3 \iio \frac{|\na \Ce|^4}{\Ce^3} 
 \\
 & \le C_2(T) 
\end{align*}
and 
\begin{align*}
 \iio |\Ue|^\frac{10}{3} & \le C_3 \int_0^T \kl{\lp{2}{\na \Ue}^2 \lp{2}{\Ue}^{\frac 43}+ \lp{2}{\Ue}^\frac{10}{3}}
\\
 & \le C_4(T)
\end{align*}
for all $\ep\in (0,1)$ with some $C_2(T),C_3,C_4(T)>0$. 
\end{proof}

In the following, we only consider the case that 
\begin{align}\label{condi;case1}
m > \frac 23, \quad \mu \ge 0, \quad \alpha > 1
\end{align}
hold, or that 
\begin{align}\label{condi;case2}
m\in \left(0,\frac 23\right], \quad \mu >0, \quad \alpha >\frac 43
\end{align}
hold. Here we note that,  
since 
%since the condition that $m>\frac 23$, $\mu>0$, $\alpha >\frac 43$ hold is included 
\[
 \left \{ (m,\mu,\alpha) \,\Big{|}\, m>\frac 23, \ \mu > 0, \ \alpha > \frac 43 \right\}
  \subset \left\{ (m,\mu,\alpha) \, \Big{|} \, m> \frac 23, \ \mu \ge 0, \ \alpha >1 \right\} 
\]
holds, it is enough to consider the case that  
\eqref{condi;case1} or \eqref{condi;case2} holds when \eqref{condi;alpha,m} holds.   

\subsection{Key estimates. Case 1: $m > \frac 23$, $\mu \ge 0$, $\alpha>1$}

In this subsection we establish estimates for $\nep$ in 
%deal with 
the case that \eqref{condi;case1} holds. 
In this case by using the diffusion effect 
we can obtain the following estimates. 

\begin{lem}\label{lem;esti2-2}
Assume that \eqref{condi;case1} holds. 
Then for all $T>0$ there exists $C(T)>0$ such that 
\begin{align}\label{esti;lem2-2} 
 \iio (\Ne + \e)^{\frac{3m+2}{3}}
 + \iio |\de(\Ne)\na \Ne|^{\frac{3m+2}{3m+1}}
 \le C(T),
\end{align}
and moreover, if $\frac 23 < m \le 2$, then 
\begin{align*}%\label{esti;naN}
 \iio |\na \Ne|^\frac{3m+2}{4} \le C(T)
\end{align*} 
hold for all $\ep\in (0,1)$. 
\end{lem}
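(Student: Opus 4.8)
The plan is to reduce all three estimates to a single auxiliary function $w_\e := (\Ne+\e)^{\frac m2}$, for which the previous lemmas already supply the two ingredients we need: Lemma \ref{lem;esti2-1} gives the space--time gradient bound $\iio |\na w_\e|^2 \le C(T)$, while Lemma \ref{lem;basic} gives the uniform-in-time mass bound $\sup_{t\in(0,\tilt)}\lp{\frac 2m}{w_\e}^{\frac 2m} = \sup_t \lp{1}{\Ne+\e} \le C(T)$.

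First I would establish the $L^{\frac{3m+2}{3}}$ space--time bound, which is the core of the lemma. Setting $q := \frac{2(3m+2)}{3m}$, the Gagliardo--Nirenberg inequality in $\mathbb{R}^3$ interpolating between $\lp{2}{\na w_\e}$ and $\lp{\frac 2m}{w_\e}$ produces an interpolation exponent $\theta$ with $q\theta = 2$; explicitly, $\lp{q}{w_\e}^{q} \le C \lp{2}{\na w_\e}^{2}\lp{\frac 2m}{w_\e}^{q-2} + C\lp{\frac 2m}{w_\e}^{q}$. The hypothesis $m>\frac 23$ is precisely what guarantees $q\theta=2\le q$ (equivalently $\theta\in(0,1)$ with $q<6$), so this is an admissible interpolation. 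Integrating in time and using $\iio|\na w_\e|^2\le C(T)$ together with the uniform $L^1$-bound on $\Ne$, one obtains $\iio (\Ne+\e)^{\frac{3m+2}{3}} = \iio \lp{q}{w_\e}^{q} \le C(T)$, which is the first claimed bound.

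Next, for the flux term I would use the pointwise identity $(\Ne+\e)^{m-1}|\na \Ne| = \frac 2m\, w_\e |\na w_\e|$ and the upper bound $\de(\Ne) \le D_2(\Ne+\e)^{m-1}$ from \eqref{condi;D}, giving $|\de(\Ne)\na \Ne|^{p} \le C\, w_\e^{p}|\na w_\e|^{p}$ with $p := \frac{3m+2}{3m+1}\in(1,2)$. A spatial H\"older inequality with exponents $\frac 2p$ and $\frac{2}{2-p}$ bounds $\io w_\e^{p}|\na w_\e|^{p}$ by $\lp{2}{\na w_\e}^{p}$ times a power of $\lp{\frac{2p}{2-p}}{w_\e}$; the key arithmetic is that $\frac{2p}{2-p}=q$ and $\frac{2p}{2-p}\cdot\frac m2 = \frac{3m+2}{3}$, so the second factor is exactly the quantity just controlled. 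A final H\"older in time with the conjugate pair $(\frac 2p,\frac{2}{2-p})$ delivers the second bound. The third estimate is handled identically: writing $|\na \Ne| = \frac 2m (\Ne+\e)^{1-\frac m2}|\na w_\e|$ and applying H\"older with exponents $\frac 2r$, $\frac{2}{2-r}$ for $r:=\frac{3m+2}{4}$, the emerging power of $(\Ne+\e)$ collapses once more to $\frac{3m+2}{3}$; the restriction $m\le 2$ ensures $r\le 2$ so the H\"older pair is admissible, the endpoint $m=2$ being trivial since then $|\na \Ne| = |\na w_\e|$ and the bound follows directly from Lemma \ref{lem;esti2-1}.

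The main obstacle is the first step: arranging the Gagliardo--Nirenberg exponents so that the spatial gradient is consumed exactly by the $\iio|\na w_\e|^2$ bound, i.e. $q\theta=2$, which is where $m>\frac 23$ is genuinely used. Once the $L^{\frac{3m+2}{3}}$ space--time estimate is secured, the two gradient bounds are routine H\"older manipulations; the only care required is checking admissibility of the H\"older exponents ($p<2$, and $r\le 2$ when $\frac 23<m\le 2$) and verifying that every resulting power of $(\Ne+\e)$ reduces to the single exponent $\frac{3m+2}{3}$.
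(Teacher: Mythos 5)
Your proposal is correct and takes essentially the same route as the paper: the Gagliardo--Nirenberg inequality applied to $(\nep+\ep)^{\frac m2}$ with the gradient exponent pinned to $2$ yields the $L^{\frac{3m+2}{3}}$ space--time bound, and the flux and gradient estimates then follow by H\"older/Young interpolation against that bound and the dissipation integral (the paper uses Young where you use H\"older for the third estimate, which is equivalent). One small inaccuracy in your commentary: admissibility of the Gagliardo--Nirenberg step (i.e.\ $q<6$ in three dimensions) requires only $m>\frac 13$, not $m>\frac 23$; the stronger threshold is really needed upstream, in Lemmas \ref{lem;problem;dif-C1} and \ref{lem;esti1}, to secure the bounds $\iio |\na(\nep+\ep)^{\frac m2}|^2\le C(T)$ and $\sup_t \lp{1}{\nep}\le C(T)$ that your argument invokes.
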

%=================================
\begin{proof}
The proof is similar to that of \cite[Lemma 5.1]{zhang_li}. 
Thus again we only write a short proof. 
Let $T>0$.  
The estimate \eqref{lem;esti2-1} yields from 
the Gagliardo--Nirenberg inequality 
that 
\begin{align*}
 \iio (\Ne+\e)^\frac{3m+2}{3} 
 &\le C_{1} \int_0^T\! 
 \kl{\lp{2}{\na (\Ne+\e)^\frac m2}^2\lp{\frac 2m}
 {(\Ne+\e)^\frac m2}^ \frac 4{3m} 
 + \lp{\frac 2m}{(\Ne+\e)^\frac m2}^\frac{2(3m+2)}{3m}}	
\\
 &\le C_2(T)
\end{align*}
for all $\ep\in (0,1)$ with some $C_{1}>0$ and $C_2(T)>0$. 
Then, we use the H\"older inequality and 
\eqref{esti;spacetimeintegral} to confirm that 
\begin{align*}
 \iio |\de(\Ne)\na \Ne|^{\frac{3m+2}{3m+1}} 
 &\le C_3\kl{\iio \frac{\de(\Ne)}{\Ne}|\na \Ne|^2}^\frac{3m+2}{6m+2}
 \kl{\iio (\Ne+\e)^\frac{3m+2}{3}}^{\frac{3m}{6m+2}} 
\\
 &\le C_4(T)  
\end{align*}
for all $\ep\in (0,1)$ 
with $C_3:= D_2^{\frac{3m+2}{6m+2}}>0$ and some $C_4(T)>0$, which implies \eqref{esti;lem2-2} holds. 
Moreover, if $\frac 23 < m\le 2$, 
then a combination of 
the Young inequality and \eqref{esti;lem2-1}, along with 
\eqref{esti;lem2-2} leads to existence of the constant 
$C_5(T)>0$ such that for all $\ep\in (0,1)$, 
\begin{align*}
 \iio |\na \Ne|^\frac{3m+2}{4} 
 \le \iio (\Ne+\e)^{m-2}|\na \Ne|^2 + \iio (\Ne+\e)^\frac{3m+2}{3} 
 \le C_5(T), 
\end{align*}
which completes the proof. 
\end{proof}
%==================================
\begin{remark}
In this lemma we establish the boundedness of $\iio |\na (\Ne + \ep)^\frac{m}{2}|^2$ instead of $\iio |\na \Ne^\frac{m}{2}|^2$ which was shown in \cite[Lemma 5.1]{zhang_li}, because the inequality 
\[
\iio \Ne^{m-2}|\na \Ne|^2 \le \iio (\Ne +\ep)^{m-2}|\na \Ne|^2,
\] 
which was utilized to show boundedness of $\iio |\na \Ne^\frac m2|^2$ 
in the proof of \cite[Lemma 5.1]{zhang_li}, 
seems to hold only when $m\ge 2$.  
\end{remark}

\subsection{Key estimates. Case 2: $m\in (0,\frac 23]$, $\mu>0$, $\alpha >\frac 43$}

We next deal with the case that \eqref{condi;case2} holds. 
In this case we can obtain important estimates for $\nep$ from the logistic-type damping. 

\begin{lem}\label{lem;esti2-3}
Assume that \eqref{condi;case2} holds.  
Then for all $T>0$ there exists $C(T)>0$ such that 
\begin{align*}%\label{esti;lem2-3}
 \iio (\Ne + \e)^{\alpha}
 + \iio |\de(\Ne)\na \Ne|^{\frac{2\alpha}{\alpha + m}}
 + \iio |\na \nep|^{\frac{2\alpha}{2+\alpha -m}}
 \le C(T)
\end{align*}
holds for all $\ep\in (0,1)$. 
\end{lem}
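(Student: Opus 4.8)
The plan is to handle the three summands separately. The first is immediate: since $\mu>0$ here, Lemma~\ref{lem;basic} gives $\mu\iio\nep^\alpha\le C(T)$, and combining this with the elementary convexity bound $(\Ne+\e)^\alpha\le 2^{\alpha-1}(\nep^\alpha+1)$ (valid since $\alpha>1$ and $\e\in(0,1)$) yields $\iio(\Ne+\e)^\alpha\le C(T)$ uniformly in $\e$. For the two gradient summands I would exploit \eqref{esti;spacetimeintegral}, which together with the structural bounds on $D$ controls a weighted gradient integral: the upper bound in \eqref{condi;D} gives $\de(\Ne)=D(\Ne+\e)\le D_2(\Ne+\e)^{m-1}$, while the lower bound, combined with $\Ne\le\Ne+\e$, gives $\frac{\de(\Ne)}{\Ne}\ge D_1(\Ne+\e)^{m-2}$; hence \eqref{esti;spacetimeintegral} furnishes the uniform estimate $\iio(\Ne+\e)^{m-2}|\na\Ne|^2\le C(T)$.

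The key device is to factor each gradient quantity as the square-integrable piece $(\Ne+\e)^{\frac{m-2}2}|\na\Ne|$ times a pure power of $\Ne+\e$, and then to apply the H\"older inequality in space-time so that the power piece lands precisely on $\iio(\Ne+\e)^\alpha$, which was just controlled. For the second summand I would write $|\de(\Ne)\na\Ne|\le D_2(\Ne+\e)^{m-1}|\na\Ne| = D_2\,\kl{(\Ne+\e)^{\frac{m-2}2}|\na\Ne|}(\Ne+\e)^{\frac m2}$, raise to the power $\frac{2\alpha}{\alpha+m}$, and split by H\"older with conjugate exponents $\frac{\alpha+m}{\alpha}$ and $\frac{\alpha+m}{m}$. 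The first factor then becomes $\kl{\iio(\Ne+\e)^{m-2}|\na\Ne|^2}^{\frac{\alpha}{\alpha+m}}$ and, after checking that $\frac m2\cdot\frac{2\alpha}{\alpha+m}\cdot\frac{\alpha+m}{m}=\alpha$, the second becomes $\kl{\iio(\Ne+\e)^\alpha}^{\frac m{\alpha+m}}$; both are finite.

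For the third summand I would proceed identically, writing $|\na\Ne| = \kl{(\Ne+\e)^{\frac{m-2}2}|\na\Ne|}(\Ne+\e)^{\frac{2-m}2}$, raising to the power $\frac{2\alpha}{2+\alpha-m}$, and applying H\"older with conjugate exponents $\frac{2+\alpha-m}{\alpha}$ and $\frac{2+\alpha-m}{2-m}$; the residual power of $\Ne+\e$ again reduces to exactly $\alpha$, so the estimate is controlled by $\iio(\Ne+\e)^{m-2}|\na\Ne|^2$ and $\iio(\Ne+\e)^\alpha$. The entire content is the exponent bookkeeping, and the only genuine point to verify --- rather than a real obstacle --- is admissibility of the H\"older exponents, i.e. $\frac{2\alpha}{\alpha+m}<2$ and $\frac{2\alpha}{2+\alpha-m}<2$; these reduce to $m>0$ and $m<2$, both of which hold throughout $m\in(0,\frac23]$. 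No compactness or regularity argument is needed: given \eqref{esti;spacetimeintegral} and the logistic $L^\alpha$ bound from Lemma~\ref{lem;basic}, the lemma follows by choosing these weights so that the diffusion integral and the logistic integral combine with the correct powers.
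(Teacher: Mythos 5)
Your proposal is correct and follows essentially the same route as the paper: the $L^\alpha$ space-time bound comes from the logistic term via Lemma \ref{lem;basic} plus the convexity estimate $(\Ne+\e)^\alpha\le 2^{\alpha-1}(\nep^\alpha+1)$, and the two gradient terms are handled by splitting off the weighted square-integrable factor $(\Ne+\e)^{\frac{m-2}{2}}|\na\Ne|$ (controlled through \eqref{esti;spacetimeintegral} and \eqref{condi;D}) so that the remaining power lands on $\iio(\Ne+\e)^\alpha$, exactly as in the paper's H\"older argument for the second summand and its Young-inequality argument for the third. The only differences are cosmetic: you bound $\de(\Ne)\le D_2(\Ne+\e)^{m-1}$ before factoring rather than factoring $\frac{\de(\Ne)}{\Ne}|\na\Ne|^2\cdot\de(\Ne)\Ne$ as the paper does, and you use H\"older where the paper uses pointwise Young for the last term; the exponent bookkeeping is identical.
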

\begin{proof}
Let $T>0$. Noticing that $\alpha >\frac 43 (>1)$ and $\ep\in (0,1)$, 
from Lemma \ref{lem;basic} we can find $C_1(T)>0$ such that 
\begin{align}\label{ineq;LLalpha}
\iio (\nep+\e)^\alpha \le 2^{\alpha-1} \iio (\nep^\alpha+1) \le  C_1(T) 
\end{align}
for all $\ep\in (0,1)$. 
Then the H\"older inequality, 
\eqref{condi;D} and \eqref{esti;spacetimeintegral} enable us to have that 
\begin{align*}
\iio |\de(\Ne)\na \Ne|^{\frac{2\alpha}{\alpha + m}} 
&\le \kl{\iio \frac{\de(\Ne)}{\Ne}|\na \Ne|^2}^{\frac{\alpha}{m+\alpha}} 
 \kl{\iio \de^{\frac{\alpha}{m}}(\nep) \cdot\nep^{\frac \alpha m}}^{\frac{m}{\alpha +m}} 
\\
 &\le C_2\kl{\iio \frac{\de(\Ne)}{\Ne}|\na \Ne|^2}^{\frac{\alpha}{m+\alpha}} 
 \kl{\iio (\Ne+\e)^\alpha}^{\frac{m}{\alpha +m}} 
\\
 &\le C_3(T)  
\end{align*}
for all $\ep\in (0,1)$ with $C_2:= D_2^{\frac{\alpha}{\alpha + m}}>0$ and some $C_3(T)>0$. 
Moreover, we establish from the Young inequality that 
\begin{align*}
\iio |\na \nep|^{\frac{2\alpha}{2+\alpha-m}} \le 
\iio (\nep+\e)^{m-2}|\na \nep|^2 + \iio (\nep+\ep)^\alpha,
\end{align*} 
which with Lemma \ref{lem;esti2-1} and \eqref{ineq;LLalpha} concludes 
the proof. 
\end{proof}

\subsection{An estimate for $\nep \uep$}

In summary, in both cases that \eqref{condi;case1} holds and that \eqref{condi;case2} holds, 
we verified the following important estimates for $\nep$, 
which are cornerstones in the proof of Theorem \ref{mainthm1}.  

\begin{lem}\label{lem;esti2-4} 
Assume that \eqref{condi;case1} or \eqref{condi;case2} holds. 
Then for all $T>0$ there is $C(T)>0$ such that 
\begin{align}\label{esti;lem2} 
 \iio (\Ne + \e)^{p_1}
 + \iio |\de(\Ne)\na \Ne|^{p_2}
 \le C(T), 
\end{align} 
where $p_1 >\frac 43$ and $p_2 \in (1,2)$ are constants defined as 
\begin{align}\label{def;p1p2}
p_1:= \begin{cases}
\frac{3m+2}{3} & \mbox{if \eqref{condi;case1} holds}, 
\\ 
\alpha & \mbox{if \eqref{condi;case2} holds} 
\end{cases}
\quad \mbox{and} \quad 
p_2:= \begin{cases}
\frac{3m+2}{3m+1} & \mbox{if \eqref{condi;case1} holds}, 
\\ 
\frac{2\alpha}{\alpha + m} & \mbox{if \eqref{condi;case2} holds}. 
\end{cases}
\end{align}
Moreover, if $0 < m \le 2$, for all $T>0$ 
\[
\iio |\na \nep|^{p_3} \le \widetilde{C}(T) 
\]
holds for all $\ep\in (0,1)$ with some $\widetilde{C}(T)>0$, where 
\begin{align*}%\label{def;p3}
 p_3:= \begin{cases}
  \frac{3m+2}{4} & \mbox{if \eqref{condi;case1} holds}, 
  \\ 
  \frac{2\alpha}{2+ \alpha - m} & \mbox{if \eqref{condi;case2} holds.}
 \end{cases}
\end{align*}
\end{lem}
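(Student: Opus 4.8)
The plan is to observe that Lemma~\ref{lem;esti2-4} is essentially a \emph{repackaging} of the two case-specific estimates already established in Lemmas~\ref{lem;esti2-2} and~\ref{lem;esti2-3}, together with the simple set-inclusion remark preceding the subsections that lets us reduce the hypothesis \eqref{condi;alpha,m} to a disjunction of the two disjoint regimes \eqref{condi;case1} and \eqref{condi;case2}. So the proof is mostly bookkeeping: I would fix $T>0$, split into the two cases, and in each case read off the values of $p_1,p_2,p_3$ from the definitions in \eqref{def;p1p2} and quote the corresponding earlier lemma verbatim.

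More precisely, first I would treat the case \eqref{condi;case1}, i.e. $m>\frac23$, $\mu\ge0$, $\alpha>1$. Here $p_1=\frac{3m+2}{3}$ and $p_2=\frac{3m+2}{3m+1}$, and Lemma~\ref{lem;esti2-2} gives \eqref{esti;lem2} directly with these exponents; the supplementary gradient bound with $p_3=\frac{3m+2}{4}$ is also exactly the final display of Lemma~\ref{lem;esti2-2}, valid under the stated restriction $\frac23<m\le2$ (which is the intersection of \eqref{condi;case1} with $0<m\le2$). Then I would treat the case \eqref{condi;case2}, i.e. $m\in(0,\frac23]$, $\mu>0$, $\alpha>\frac43$. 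Here $p_1=\alpha$ and $p_2=\frac{2\alpha}{\alpha+m}$, and all three bounds—including $p_3=\frac{2\alpha}{2+\alpha-m}$—are precisely the content of Lemma~\ref{lem;esti2-3}, which holds for all such $m\le\frac23<2$.

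The only genuine (and still minor) verification is to check the claimed ranges of the exponents, namely that $p_1>\frac43$ and $p_2\in(1,2)$ in both cases, since these are asserted in the statement. In Case~1, $m>\frac23$ gives $p_1=\frac{3m+2}{3}>\frac43$, while $p_2=\frac{3m+2}{3m+1}=1+\frac1{3m+1}$ lies in $(1,2)$ because $3m+1>1$. In Case~2, $p_1=\alpha>\frac43$ by assumption, and $p_2=\frac{2\alpha}{\alpha+m}$ exceeds $1$ since $\alpha>m$ (as $\alpha>\frac43\ge\frac23\ge m$) and is below $2$ since $m>0$ forces $\alpha+m>\alpha$. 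These are one-line inequalities, so there is no analytic obstacle here at all.

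I do not expect any real difficulty in this proof, since no new estimate is produced; the work is entirely in selecting the right exponents and invoking the already-proved case lemmas. The one point requiring a sentence of care is that the three distinct ``$m\le2$'' gradient conclusions of Lemmas~\ref{lem;esti2-2} and~\ref{lem;esti2-3} must be merged into the single $p_3$-statement: in Case~1 one uses $\frac23<m\le2$ and in Case~2 one uses $0<m\le\frac23$, and together these cover exactly $0<m\le2$ under \eqref{condi;alpha,m}, so the combined hypothesis $0<m\le2$ in the present lemma is correctly matched. With that remark the proof closes immediately.
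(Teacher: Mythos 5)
Your proposal is correct and matches the paper's own proof, which is exactly the same one-step argument: invoke Lemma \ref{lem;esti2-2} in the case \eqref{condi;case1} and Lemma \ref{lem;esti2-3} in the case \eqref{condi;case2}, the exponents $p_1,p_2,p_3$ being read off from those lemmas. Your additional verification of the ranges $p_1>\frac43$, $p_2\in(1,2)$ and of the merging of the two gradient statements into the single hypothesis $0<m\le 2$ is sound (and slightly more careful than the paper, which leaves these checks implicit).
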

\begin{proof}
By virtue of Lemmas \ref{lem;esti2-2} and \ref{lem;esti2-3}, we can obtain the estimates in the statement. 
\end{proof}

\begin{remark}\label{remark;zhang-li;esti}
In order to obtain estimates for $\nep$ stated in Lemma \ref{lem;esti2-4} we need to assume that $m>\frac 23$ or that $\alpha >\frac 43$ (with $\mu>0$). Indeed, 
if we assume that $m>\frac 23$ or that $\alpha>\frac 43$, then we can confirm that $p_1 > \frac 43$ holds, which is important when we consider convergences of the approximate solution (see Lemma \ref{lem;conv2} and its proof). 
\end{remark}
In the proof of Theorem \ref{mainthm1} we also need to establish some estimate for $\Ne \Ue$ (see e.g., Proof of Lemma \ref{lem;esti5}). 
Here, in the case that $m>\frac 23$, the previous work \cite{zhang_li} asserts from Lemmas  \ref{lem;esti2-1} and \ref{lem;esti2-2} that 
\[ 
 \iio |\Ne\Ue| \le \iio \Ne^{\frac{10}{7}} + \iio |\Ue|^{\frac {10}{3}}
 \le C(T)
\]
with some $C(T)>0$; %when $\frac 23 < m \le 2$; 
however, $\iio \Ne^{\frac{10}{7}}$ is bounded only when 
$(\frac 23 < )\frac{16}{21}\le m $ (from the relation $\frac{10}{7} \le \frac{3m+2}{3}$). 
Therefore we need the following additional estimate to control the term $\iio \Ne \Ue$. 

\begin{lem}\label{lem;esti3}
Assume that \eqref{condi;case1} or \eqref{condi;case2} holds. 
Then for all $T>0$ there exists $C(T)>0$ such that 
\begin{align*}%\label{esti;UeandNe}
\int_0^T \lp{r}{\Ne}^2 + \int_0^T \lp{\frac{rq}{r-q}}{\Ue}^{\frac{2q}{2-q}} \le C(T) 
\quad \mbox{for all} \ \ep \in (0,1) 
\end{align*}
with some $r > \frac{6}{5}$ and some $q>1$, 
and moreover, 
\[
\int_0^T \int_\Omega |\Ne\Ue|^q  
\le C
\quad \mbox{for all} \ \ep\in (0,1) 
\]
holds. 
\end{lem}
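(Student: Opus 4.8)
The plan is to establish the three bounds by combining the estimates already secured in Lemmas \ref{lem;esti2-1} and \ref{lem;esti2-4} through interpolation and H\"older's inequality, with the space-time bound on $\Ne$ in some $L^{p_1}$ with $p_1 > \frac 43$ being the crucial input. First I would obtain the bound on $\int_0^T \lp{r}{\Ne}^2$. From Lemma \ref{lem;esti2-4} we control $\iio (\Ne+\e)^{p_1}$ with $p_1 > \frac 43$, and Lemma \ref{lem;basic} gives the uniform $L^1$-bound $\io \Ne \le C(T)$; interpolating $\lp{r}{\Ne}$ between $L^1(\Omega)$ and a higher Lebesgue space tied to the spatial regularity of $\na(\Ne+\e)^{\frac m2}$ via the Gagliardo--Nirenberg inequality, then raising to the power $2$ and integrating in time, should yield $\int_0^T \lp{r}{\Ne}^2 \le C(T)$ for a suitable $r > \frac 65$. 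The key arithmetic is to check that the exponent $r$ can be chosen strictly larger than $\frac 65$ while the time-integrability exponent coming from \eqref{esti;spacetimeintegral} and \eqref{esti;lem2} remains at least $2$; this is exactly where $p_1 > \frac 43$ is needed, as flagged in Remark \ref{remark;zhang-li;esti}.

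Second, for the velocity term, I would exploit the bound $\iio |\Ue|^{\frac{10}{3}} \le C(T)$ from \eqref{esti;lem2-1} together with the uniform $L^\infty(0,T;L^2(\Omega))$-bound on $\Ue$ from Lemma \ref{lem;esti1}. Interpolating $\lp{s}{\Ue}$ between $L^2(\Omega)$ and $L^{\frac{10}{3}}(\Omega)$ (equivalently using the mixed space-time bound $\Ue \in L^{\frac{10}{3}}(\Omega\times(0,T)) \cap L^\infty(0,T;L^2(\Omega))$), one can cover a range of exponents $s$ up to $\frac{10}{3}$ with an associated time-integrability exponent. The aim is to choose $q > 1$ small enough, and $r$ as fixed above, so that the conjugate exponent $\frac{rq}{r-q}$ lands within this admissible range and the corresponding time exponent $\frac{2q}{2-q}$ is met by the available space-time integrability of $\Ue$. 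Since $q$ can be taken close to $1$, the exponent $\frac{rq}{r-q}$ is close to $\frac{r}{r-1}$, which for $r$ slightly above $\frac 65$ is moderate, comfortably inside the interpolation window; so this step is essentially a matter of choosing $q$ judiciously.

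Finally, the bound on $\iio |\Ne \Ue|^q$ follows by H\"older's inequality in the space variable with exponents $\frac rq$ and $\frac{r}{r-q}$: one splits $\io |\Ne \Ue|^q \le \lp{r}{\Ne}^q \, \lp{\frac{rq}{r-q}}{\Ue}^q$, then applies H\"older in time with exponents $\frac 2q$ and $\frac{2}{2-q}$, reducing the claim precisely to the two bounds established in the first two steps. This is the cleanest part of the argument and requires only the conjugacy relations among the exponents. The main obstacle is the consistency check in the first two steps: one must simultaneously satisfy $r > \frac 65$, $q > 1$, the interpolation constraints on $\Ue$, and the requirement that the time exponents do not exceed what \eqref{esti;lem2} and \eqref{esti;lem2-1} provide. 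This is a finite-dimensional feasibility problem for the exponents, and its solvability hinges entirely on $p_1 > \frac 43$; if $p_1$ were only $\frac 43$ (the borderline $m = \frac 23$ case), the window for $r$ would collapse to $r = \frac 65$ and the argument would fail, consistent with the earlier remarks. I would therefore carry out the exponent bookkeeping carefully, verifying each Gagliardo--Nirenberg interpolation coefficient lies in $[0,1]$ and that the resulting time-integrability powers are achievable, before invoking the two H\"older steps to conclude.
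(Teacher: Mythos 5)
Your first and third steps are sound and essentially coincide with the paper's proof: the bound on $\int_0^T \lp{r}{\Ne}^2$ follows by interpolating $L^r$ between $L^1$ and $L^{p_1}$, choosing $r\in\kl{\frac 65,\min\{p_1,2\}}$ with $\frac{3r-2}{r}\le p_1$ so that the resulting time exponent stays at or below $p_1$ (feasible exactly because $\frac{3\cdot\frac 65-2}{\frac 65}=\frac 43<p_1$), and the final bound on $\iio|\Ne\Ue|^q$ is precisely the double H\"older step you describe. The genuine gap is in your second step. You propose to reach $\lp{\frac{rq}{r-q}}{\Ue}$ by interpolating between $L^\infty(0,T;L^2(\Omega))$ and $L^{\frac{10}{3}}(\Omega\times(0,T))$, asserting that $\frac{rq}{r-q}\approx\frac{r}{r-1}$ is ``comfortably inside the interpolation window.'' It is not: that window only covers spatial exponents in $[2,\frac{10}{3}]$, whereas for $r$ slightly above $\frac 65$ one has $\frac{r}{r-1}$ slightly below $6$. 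Nor can you escape by enlarging $r$: the constraint $\frac{3r-2}{r}\le p_1$ forces $r\le\frac{2}{3-p_1}$, so in the critical regime $p_1\searrow\frac 43$ (i.e.\ $m\searrow\frac 23$ in Case 1, or $\alpha\searrow\frac 43$ in Case 2 --- exactly the regime this lemma is designed to handle, cf.\ Remark \ref{remark;zhang-li;esti}) the exponent $r$ is pinned near $\frac 65$ and the required spatial integrability of $\Ue$ is pinned near $6\gg\frac{10}{3}$. No ``judicious'' choice of $q>1$ helps, since $\frac{rq}{r-q}>\frac{r}{r-1}$ for every $q>1$.

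The repair --- and the paper's actual argument --- is to not discard the gradient information from which the $L^{\frac{10}{3}}$ bound was derived in the first place: Lemma \ref{lem;esti1} gives $\iio|\na\Ue|^2\le C(T)$, and the Gagliardo--Nirenberg inequality (equivalently $W^{1,2}(\Omega)\hookrightarrow L^6(\Omega)$ in three dimensions) reaches spatial exponents up to $6$, namely
\begin{align*}
\int_0^T \lp{\frac{rq}{r-q}}{\Ue}^{\frac{2q}{2-q}}
\le C\int_0^T\kl{\lp{2}{\na\Ue}^{\frac{2q}{2-q}a}\lp{2}{\Ue}^{\frac{2q}{2-q}(1-a)}+\lp{2}{\Ue}^{\frac{2q}{2-q}}},
\qquad a=\frac{3(rq-2r+2q)}{2rq}.
\end{align*}
The feasibility condition is then $\frac{2q}{2-q}\,a<2$, which as $q\searrow 1$ tends to $\frac{3(2-r)}{r}<2$, i.e.\ precisely the condition $r>\frac 65$; hence a suitable $q\in(1,2)$ exists, and the time integral closes via the Young inequality against $\int_0^T\lp{2}{\na\Ue}^2$. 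In short, your exponent bookkeeping for $\Ne$ is correct, but for $\Ue$ the space-time $L^{\frac{10}{3}}$ bound is strictly weaker than the pair of bounds ($L^\infty(0,T;L^2)$ and $\na\Ue\in L^2(\Omega\times(0,T))$) from which it follows, and this lemma needs the full strength of that pair.
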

\begin{proof}
Let $T>0$. 
%Aided by the continuous embedding $W^{1,2}(\Omega)\hookrightarrow L^6(\Omega)$, 
%we first see from Lemma \ref{lem;esti1} that for all $\ep\in (0,1)$, 
%\begin{align}\label{esti;uL62}
% \int_0^T \lp{6}{\Ue}^2 
% \le C_1 \int_0^T (\lp{2}{\na \Ue}^2 + \lp{2}{\Ue}^2 )
% \le C_2(T) 
%\end{align}
%with some $C_1>0$ and $C_2(T)>0$.  
Now, since $\frac{3\cdot \frac 65 -2}{\frac 65} = \frac 43 < p_1$, we can find $r\in (\frac 65,\min\{p_1,2\})$ such that 
$%\begin{align*}
\frac{3r-2}{r} \le p_1, 
$ %\end{align*} 
which implies that 
\begin{align}\label{ineq;rela;rp_1}
2\cdot \frac{r-1}{r}\cdot \frac{p_1}{p_1-1} \le p_1 
\end{align}
holds. 
Therefore an interpolation inequality with \eqref{ineq;rela;rp_1}  entails from \eqref{esti;lem2} that 
\begin{align}\label{esti;nLr2}\notag
 \int_0^T \lp{r}{\Ne}^2 
 &\le  \int_0^T \lp{p_1}{\nep}^{\frac{2(r-1)p_1}{r(p_1-1)}}\lp{1}{\nep}^{\frac{2(p_1-r)}{r(p_1-1)}}
\\
 &\le C_1(T) \int_0^T \kl{\lp{p_1}{\nep}^{p_1}+1}\le C_2(T)
\end{align}
for all $\ep\in (0,1)$ with some $C_1(T),C_2(T)>0$. 
On the other hand, since the fact $r\in (\frac 65,2)$ implies that 
$0< \frac{3(2-r)}{2r} < 1$ holds, 
we can find $q\in (1,2)$ such that
 \[
 a:= \frac{3(rq-2r+2q)}{2rq} \in (0,1)
 \]
and 
 \[
  \frac{2q}{2-q}\cdot a <2. 
 \]
Then the  Gagliardo--Nirenberg inequality, Lemma \ref{lem;esti1} and the Young inequality derive that 
 \begin{align}\label{esti;LblaLblaforu}\notag
 \int_0^T \lp{\frac{rq}{r-q}}{\Ue}^{\frac{2q}{2-q}} 
 &\le C_3 \int_0^T \kl{\lp{2}{\na \Ue}^{\frac{2q}{2-q}a} \lp{2}{\Ue}^{\frac{2q}{2-q}(1-a)} + \lp{2}{\Ue}^{\frac{2q}{2-q}}} 
 \\ \notag
 &\le C_4(T) \kl{ \int_0^T \lp{2}{\na \Ue}^2+1 }
\\ 
 &\le C_5(T)
 \end{align}
 with some $C_3,C_4(T),C_5(T)>0$, 
where we have used the relation $\frac{2q}{2-q}a<2$. 
Moreover, we infer from 
the H\"{o}lder inequality, 
\eqref{esti;nLr2} and 
\eqref{esti;LblaLblaforu}
that 
\[
\int_0^T \io |\Ne \Ue|^q 
\le \kl{\int_0^T \lp{r}{\Ue}^2}^\frac{q}{2}\kl{\int_0^T \lp{\frac{rq}{r-q}}{\Ue}^{\frac{2q}{2-q}}}^{\frac{2-q}{2}} 
\le C_2(T)^\frac{q}{2}C_5(T)^{\frac{2-q}{2}} 
\]
holds, which completes the proof. 
\end{proof}

\subsection{Time regularities}

One of strategies for establishing convergences of the approximate solution  is to use an Aubin--Lions-type lemma (cf.\ \cite[Corollary 4]{Simon_1987}). To apply an Aubin--Lions-type lemma to $(\nep)_{\ep\in (0,1)}$ we desire some estimates for $\na \nep$ and $\partial_t \nep$; however, in view of Lemma \ref{lem;esti2-4}, we could have some estimate for $\na \nep$ only in the case that $m\in (0,2]$. 
Thus, in the case that $m>2$, we need to use some different quantity e.g., $(\nep+\ep)^\gamma$ with some $\gamma>0$. 
By virtue of Lemma \ref{lem;esti2-1}, $(\nep+\ep)^\frac m2$ is one of candidates of this quantity. 
Now we show the following lemma which is 
utilized to obtain an estimate for $\partial_t (\nep+\ep)^{\frac m2}$ when $m>2$. 

\begin{lem}\label{lem;esti4}
If $m>2$, then for all $T>0$ there exists $C(T)>0$ such that 
\begin{align*}
 \io (\Ne + \ep)^{m-1} \le C(T)  
 \quad \mbox{for all} \ t\in (0,T)\ \mbox{and all} \ \ep \in (0,1)
\end{align*}
and 
\[
 \iio |\na (\Ne+\ep)^{m-1}|^2 \le C(T) 
% \iio (\Ne + \ep )^{2m-4}|\na \Ne |^2 \le C(T) 
 \quad \mbox{for all} \ \ep\in (0,T). 
\]
\end{lem}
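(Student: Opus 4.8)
The plan is to derive a differential inequality for $\io(\nep+\ep)^{m-1}$ by testing the first equation of \eqref{Pe} with $(m-1)(\nep+\ep)^{m-2}$, and then to read off both assertions from it. Writing $w:=(\nep+\ep)^{m-1}$ and noting that $(\nep+\ep)^{m-2}\na\nep=\tfrac{1}{m-1}\na w$, I first observe that the convective term drops out: since $\na\cdot\uep=0$ and $\uep|_{\pa\om}=0$, one has $\io(\nep+\ep)^{m-2}\uep\cdot\na\nep=\tfrac{1}{m-1}\io\uep\cdot\na w=0$. The diffusion term, after integrating by parts (the boundary contribution vanishing by $\pa_\nu\nep=0$), equals $-(m-1)(m-2)\io(\nep+\ep)^{m-3}\de(\nep)|\na\nep|^2$, which is the crucial dissipative term; using $\de(\nep)=D(\nep+\ep)\ge D_1(\nep+\ep)^{m-1}$ and $m>2$, it is bounded above by $-\tfrac{(m-2)D_1}{m-1}\io|\na w|^2$.

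For the chemotactic term I would integrate by parts and use $\tfrac{\nep}{(\nep+\ep)(1+\ep\nep)}\le 1$ together with $\chi(\cep)\le\chi_{\max}:=\max_{[0,\lp{\infty}{c_0}]}\chi$ (finite thanks to the bound $\lp{\infty}{\cep}\le\lp{\infty}{c_0}$ from Lemma \ref{lem;basic}) to obtain the pointwise estimate $(m-2)\io\tfrac{\nep\chi(\cep)}{(\nep+\ep)(1+\ep\nep)}\na w\cdot\na\cep\le(m-2)\chi_{\max}\io|\na w||\na\cep|$; Young's inequality then absorbs half of the dissipation and leaves a term $C_1\io|\na\cep|^2$ with $C_1$ independent of $\ep$. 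Among the reaction terms, $-\mu\nep^\alpha$ and $-\ep\nep^2$ contribute nonpositively and may be discarded, while $\kappa\nep$ is controlled by $(m-1)|\kappa|\io w$ since $\nep\le\nep+\ep$. Altogether this yields
\begin{align*}
\frac{d}{dt}\io w+\frac{(m-2)D_1}{2(m-1)}\io|\na w|^2\le(m-1)|\kappa|\io w+C_1\io|\na\cep|^2
\end{align*}
on $(0,\infty)$ (recall $\tmax=\infty$ by Lemma \ref{lem;ge}), with all constants $\ep$-independent.

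To close the argument I would first bound the right-hand factor in time by Hölder's inequality and the estimate $\iio|\na\cep|^4\le C(T)$ of Lemma \ref{lem;esti2-1}, giving $\iio|\na\cep|^2\le C(T)$. The first assertion then follows from Gronwall's inequality applied to $y(t):=\io w$ (discarding the nonnegative dissipation), using that $y(0)=\io(n_{0\ep}+\ep)^{m-1}$ is bounded uniformly in $\ep$ because $n_{0\ep}\to n_0$ in $X=L^{m-1}(\om)$ when $m>2$, by \eqref{condi;ini1}--\eqref{ini;app;nep}. Feeding the resulting bound $\sup_{(0,T)}y\le C(T)$ back into the differential inequality and integrating over $(0,T)$ produces the second assertion $\iio|\na w|^2\le C(T)$. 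The main obstacle is the chemotaxis term: making its prefactor small enough to be swallowed by the dissipation hinges on rewriting $(\nep+\ep)^{m-3}\de(\nep)\na\nep$ as $\na w$ times the bounded factor $\tfrac{\nep}{(\nep+\ep)(1+\ep\nep)}\chi(\cep)$, and it is precisely here—through the sign of $m-2$ and the lower bound on $\de$—that the hypothesis $m>2$ is essential.
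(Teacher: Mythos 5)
Your proposal is correct and follows essentially the same route as the paper: test the first equation with $(m-1)(\nep+\ep)^{m-2}$, integrate by parts, absorb the chemotaxis term into half of the dissipation $\io |\na(\nep+\ep)^{m-1}|^2$ via Young's inequality, control $\iio|\na \cep|^2$ through Lemma \ref{lem;esti2-1}, and use that $n_{0\ep}$ is bounded in $L^{m-1}(\Omega)$. If anything, you are slightly more careful than the paper, whose displayed computation silently discards the reaction terms $\kappa\nep-\mu\nep^\alpha-\ep\nep^2$; your Gronwall step handling the contribution $(m-1)|\kappa|\io(\nep+\ep)^{m-1}$ closes that (harmless, but real when $\kappa>0$) gap.
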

\begin{proof}
The main strategy for the proof is based on that in the proof of \cite[Lemma 3.3]{KM}. 
Since the inequality $(m-1)(m-2)>0$ holds from the condition $m>2$, 
the condition \eqref{condi;D} and the H\"older inequality yield that 
\begin{align*}
 \frac{d}{dt} \io (\Ne + \ep)^{m-1} 
% &= (m-1) \io (\Ne + \ep)^{m-2} \na \cdot \left( \de (\Ne) - \frac{\Ne}{1+\ep \Ne}\chi(\Ce)\na \Ce \right) 
%\\ 
 & = -(m-1)(m-2) \io (\Ne+\ep)^{m-3}\de (\Ne) |\na \Ne|^2 
 \\
 &\quad\, + (m-1)(m-2) \io \frac{(\Ne+\ep)^{m-3}\Ne}{1+\ep \Ne}\chi(\Ce)\na \Ne\cdot \na \Ce
\\ 
 & \le - \frac{D_1(m-2)}{2(m-1)} \io |\na (\Ne+\ep)^{m-1}|^2 
   + \frac{(m-1)(m-2)}{2D_1} \io |\na \Ce|^2. 
\end{align*}
Then, for each $T>0$ and all $t\in (0,T)$, 
integrating it over $(0,t)$ derives that 
\begin{align*}
 \io (\Ne + \ep)^{m-1} 
 &+ \frac{D_1(m-2)}{2(m-1)} \int_0^t \io |\na (\Ne+\ep)^{m-1}|^2 
\\
 &\le 
 \io (n_{0,\ep} + \ep)^{m-1} 
 + \frac{(m-1)(m-2)}{2D_1} \iio |\na \Ce|^2, 
\end{align*} 
which implies that this lemma holds. 
\end{proof}
\begin{remark}
This lemma is similar to \cite[Lemma 5.2]{zhang_li}; 
however, 
there is a mistake in the class of the initial data $n_0\in L\log L(\Omega)$ when $m>2$; indeed, to estimate $\io n_{0\ep}^{p}$ for all $p\in [1,9(m-1))$, 
we have to assume that $n_0\in \bigcap_{p\in [1,9(m-1))} L^{p}(\Omega)$. 
Moreover, in view of the fact 
$\bigcap_{p\in [1,9(m-1))} L^{p}(\Omega) \subset L^{m-1}(\Omega)$, 
Lemma \ref{lem;esti4} is more suitable than the previous result. 

%we seem to assume that $n_0\in \bigcap_{p\in [1,9(m-1))} L^{p}(\Omega)$ in order to obtain estimates in \cite[Lemma 5.2]{zhang_li}. 
%Indeed, to estimate $\io n_{0\ep}^{p}$ for all $p\in [1,9(m-1))$, 
%we have to assume that $n_0\in \bigcap_{p\in [1,9(m-1))} L^{p}(\Omega)$  
%instead of $n_0\in L\log L(\Omega)$ when $m>2$; in view of the fact 
%$\bigcap_{p\in [1,9(m-1))} L^{p}(\Omega) \subset L^{m-1}(\Omega)$ 
%Lemma \ref{lem;esti4} is more suitable than the previous result. 
\end{remark}

Then, thanks to this lemma, we shall see some time regularity properties of $(\nep+\ep)^\gamma$ with some $\gamma>0$. 

\begin{lem}\label{lem;esti5}
Assume that \eqref{condi;case1} or \eqref{condi;case2} holds. 
Then for all $T>0$ there exists $C(T)>0$ such that 
\[
 \int_0^T \|\partial_t (\Ne+\ep)^\gamma \|_{(W^{2,4}_0(\Omega))^\ast}\le C(T) \quad \mbox{for all} \ \ep\in (0,1),
\]
where 
\[
\gamma := \begin{cases}
 1 & (0 < m\le 2), 
\\
 \frac m2 & (m >2). 
\end{cases}
\]
\end{lem}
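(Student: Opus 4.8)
The plan is to estimate $\partial_t(\Ne+\ep)^\gamma$ by testing the first equation of \eqref{Pe} against an arbitrary $\varphi\in W^{2,4}_0(\Omega)$ and bounding the resulting dual-space norm by the $L^1$-in-time quantities already controlled in Lemmas \ref{lem;esti2-4}, \ref{lem;esti3} and \ref{lem;esti4}. First I would compute, using the chain rule, that
\[
 \partial_t (\Ne+\ep)^\gamma = \gamma (\Ne+\ep)^{\gamma-1}(\Ne)_t,
\]
and substitute $(\Ne)_t$ from the first PDE in \eqref{Pe}. After multiplying by $\varphi$ and integrating by parts to move derivatives off the diffusion and chemotaxis fluxes, I expect five types of terms: a transport term involving $\uep\cdot\na\Ne$, the diffusion term $\de(\Ne)\na\Ne$, the chemotaxis term $\frac{\Ne\chi(\Ce)}{1+\ep\Ne}\na\Ce$, the zeroth-order reaction terms $\kappa\Ne-\mu\Ne^\alpha-\ep\Ne^2$, and (because of the factor $(\Ne+\ep)^{\gamma-1}$) extra gradient terms produced when the derivative hits that prefactor.

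The key steps, in order, are: rewrite each term so that the spatial derivatives act on $\varphi$ (whose $W^{2,4}_0$-norm we may bound by $1$), thereby reducing everything to $\|\varphi\|_{W^{2,4}_0}$ times an $\ep$-uniform $L^1$-in-time quantity. For the transport term I would use the divergence-free condition $\na\cdot\uep=0$ to write $\io (\uep\cdot\na\Ne)\,(\Ne+\ep)^{\gamma-1}\varphi$ in divergence form and control it by $\int_0^T\io |\Ne\uep|^q$ from Lemma \ref{lem;esti3} together with the embedding $W^{2,4}_0(\Omega)\hookrightarrow W^{1,\infty}(\Omega)$. The diffusion term reduces to $\io \de(\Ne)\na\Ne\cdot\na\varphi$ (plus a lower-order piece from $\na(\Ne+\ep)^{\gamma-1}$), controlled by $\iio|\de(\Ne)\na\Ne|^{p_2}$ from \eqref{esti;lem2}; the chemotaxis term reduces to an integral controlled by $\iio|\na\Ce|^4$ together with the $L^{p_1}$-bound on $\Ne$; and the reaction terms are controlled by $\iio\Ne^\alpha$ and $\iio\Ne^2$ from \eqref{esti;spacetimeintegral}. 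Finally I would split into the two cases $0<m\le 2$ (where $\gamma=1$, so the troublesome prefactor $(\Ne+\ep)^{\gamma-1}$ equals $1$ and the gradient estimate $\iio|\na\nep|^{p_3}$ is available) and $m>2$ (where $\gamma=\frac m2$, so I replace $\na(\Ne+\ep)^{m-1}$-type quantities using Lemma \ref{lem;esti4}).

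The main obstacle I anticipate is the case $m>2$, where $\gamma=\frac m2<m-1$ forces careful bookkeeping of the mixed powers $(\Ne+\ep)^{\gamma-1}\de(\Ne)\sim(\Ne+\ep)^{\frac m2+m-2}$ appearing in the diffusion term, so that they can be absorbed into the bounds for $\na(\Ne+\ep)^{m-1}$ and $(\Ne+\ep)^{m-1}$ from Lemma \ref{lem;esti4} via Hölder and the available space-time integrability; here I must verify that the relevant exponents are admissible (in particular that no power of $\Ne$ exceeds what \eqref{esti;lem2} and Lemma \ref{lem;esti4} control). A secondary subtlety is ensuring all constants are genuinely $\ep$-independent, since the factor $\frac{1}{1+\ep\Ne}\le 1$ and the approximation terms $\ep\Ne^2$ must be bounded uniformly; the latter is exactly provided by \eqref{esti;spacetimeintegral}. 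Once each term is matched to an $\ep$-uniform $L^1(0,T)$ estimate, taking the supremum over $\|\varphi\|_{W^{2,4}_0}\le 1$ and integrating in time yields the claimed dual-norm bound.
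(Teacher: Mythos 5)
Your proposal is correct and takes essentially the same route as the paper's proof: a duality argument testing against $\psi\in W^{2,4}_0(\Omega)\hookrightarrow W^{1,\infty}(\Omega)$, substitution of the first equation of \eqref{Pe} followed by integration by parts, term-by-term bounds using Lemmas \ref{lem;esti2-1}, \ref{lem;esti2-2}, \ref{lem;esti2-4}, \ref{lem;esti3} and \ref{lem;esti4}, and the same case split between $\gamma=1$ ($0<m\le 2$) and $\gamma=\frac m2$ ($m>2$). Two minor points of comparison: the space-time $L^1$ control of $\mu\nep^\alpha$ and $\ep\nep^2$ should be quoted from Lemma \ref{lem;basic} rather than from \eqref{esti;spacetimeintegral}, since the integrand in the latter carries the weight $\log\nep$, which is negative where $\nep<1$; and the paper's displayed decomposition $I_1+\cdots+I_5$ in fact omits the zeroth-order terms $\kappa\nep-\mu\nep^\alpha-\ep\nep^2$ altogether, so your inclusion of them makes your outline the more complete of the two on that point.
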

\begin{proof}
Let $T>0$ and let $\psi \in W^{2,4}_0(\Omega)$ satisfy $\|\psi\|_{W^{2,4}(\Omega)} \le 1$. Here we note from the continuous embedding 
$W^{2,4}_0(\Omega) \hookrightarrow W^{1,\infty}(\Omega)$ that 
there exists $C_1>0$ such that $\|\psi\|_{W^{1,\infty}(\Omega)} \le C_1$. 
Now straightforward calculations and integration by parts 
enable us to have that  
\begin{align*}
 \io (\partial_t (\Ne+\ep)^\gamma)\psi 
 &= 
 -\gamma (\gamma -1) \io (\Ne + \ep)^{\gamma-2} \de(\Ne) |\na \Ne|^2\psi
 \\
&\quad\, 
 -\gamma \io (\Ne+\ep)^{\gamma -1} \de(\Ne) \na \Ne\cdot\na \psi 
\\
&\quad \,
 + \gamma (\gamma -1) \io \frac{(\Ne+\ep)^{\gamma -2}\Ne\chi(\Ce)}{1+\ep \Ne}(\na \Ne\cdot \na \Ce) \psi
\\ &\quad\,
 + \gamma \io \frac{(\Ne+\ep)^{\gamma-1}\Ne \chi(\Ce)}{1+\ep \Ne} \na \Ce\cdot \na \psi 
\\ & \quad \, 
 - \gamma \io  (\Ne+ \ep)^{\gamma -1} (\Ue\cdot\na \Ne) \psi
\\ 
& =: I_1 + I_2 + I_3 + I_4+ I_5.  
\end{align*}
We first deal with the case that $0 < m \le 2$, 
which means $\gamma =1$. Then we note that $I_1 = I_3=0$. 
Furthermore, we use the H\"older inequality to obtain that 
\[
 |I_2| \le C_2 \left( \io (\de(\Ne)|\na \Ne|)^{p_2} +1 \right)
\]
and 
\[
 |I_4| \le C_3 \kl{\io (\Ne+\ep)^\frac 43 + \io |\na \Ce|^4}
\]
as well as 
\[
 |I_5| = \gamma\left| \io \Ne\Ue \cdot \na \psi  \right| \le C_1 \gamma \io |\Ne \Ue|
\]
with some $C_2,C_3>0$, 
which with the standard duality argument implies from Lemmas \ref{lem;esti2-4} and \ref{lem;esti3} and the fact $p_1 > \frac{4}{3}$ that 
\begin{align*}
 \int_0^T \| \partial_t (\Ne+\ep)\|_{(W^{2,4}_0(\Omega))^\ast} 
 &= \int_0^T \sup \left\{\left|\io \partial_t (\Ne+\ep)\psi \right| \ \Big|\ \psi \in W^{2,4}_0(\Omega), \ \|\psi\|_{W^{2,4}(\Omega) }\le 1 \right\}
 \\
 &\le C_4 \iio \kl{(\de(\Ne)|\na \Ne|)^{p_2} +1 + (\Ne+\ep)^{p_1} + |\na \Ce|^4 + |\nep\uep|}
 \\
 &\le C_5(T)
\end{align*}
with some $C_4>0$ and $C_5(T)>0$, where $p_1,p_2$ are constants defined as \eqref{def;p1p2}. 
On the other hand, in the case that $m>2$, 
which implies $\gamma =\frac m2$, 
we obtain from \eqref{condi;D} and the Young inequality that 
\begin{align*}
&|I_1| \le C_6 \kl{\io |\na (\Ne + \ep)^\frac{m}{2}|^2 +\io |\na (\Ne+ \ep)^{m-1}|^2}, 
\\[1mm] 
 & |I_2| \le C_7 \kl{\io (\Ne+\ep)^m+ \io |\na (\Ne + \ep)^{m-1}|^2 }
\end{align*}
and 
\begin{align*}
 &|I_3| \le C_8 \kl{ \io |\na (\Ne + \ep)^\frac m2|^2 +\io |\na \Ce|^2}, 
\\[1mm] 
 & |I_4| \le C_9 \kl{ \io (\Ne + \ep)^m+\io |\na \Ce|^2}
\end{align*}
as well as 
\begin{align*}
 |I_5| \le C_{10}\io \kl{|\na (\Ne+\ep)^\frac m2|^2 + \io |\Ue|^2 }
\end{align*}
with some $C_6,C_7,C_8,C_9,C_{10}>0$. 
Then a combination of Lemmas \ref{lem;esti2-1}, 
\ref{lem;esti2-2}, \ref{lem;esti4} and 
the standard duality argument derives that 
\begin{align*}
 \int_0^T \| \partial_t (\Ne+\ep)^{\frac m2}\|_{(W^{2,4}_0(\Omega))^\ast} 
 \le C_{11}(T)
\end{align*}
holds with some $C_{11}(T)>0$. 
\end{proof}

Similar arguments in the proof of \cite[Lemma 5.3]{zhang_li} 
can derive the following lemma; 
thus we only introduce the statement. 
\begin{lem}\label{lem;esti6}
Assume that \eqref{condi;case1} or \eqref{condi;case2} holds. 
Then for all $T>0$ there exists $C(T)>0$ such that 
 \[
 \int_0^T \|\partial_t \sqrt{\Ce}\|_{(W^{2,4}_{0}(\Omega))^\ast} 
 + \int_0^T \|\partial_t \Ue\|_{(W^{1,\frac 52}_{0,\sigma}(\Omega))^\ast}^{\frac 54}\le C(T)\quad \mbox{for all} \ \ep\in(0,1). 
 \]
\end{lem}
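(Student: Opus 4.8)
The plan is to estimate the two time-derivative norms by testing the respective equations against suitable functions and bounding the resulting terms using the space-time estimates already collected. Concretely, for a fixed $T>0$ and any $\psi\in W^{2,4}_0(\Omega)$ with $\|\psi\|_{W^{2,4}(\Omega)}\le 1$ (hence $\|\psi\|_{W^{1,\infty}(\Omega)}\le C_1$ by the embedding $W^{2,4}_0(\Omega)\hookrightarrow W^{1,\infty}(\Omega)$), I would multiply the second equation of \eqref{Pe} by the test function associated with $\sqrt{\cep}$ and integrate by parts. Since $\partial_t\sqrt{\cep}=\frac{1}{2\sqrt{\cep}}\,\partial_t\cep$, the computation of $\io(\partial_t\sqrt{\cep})\psi$ produces terms involving $\na\cep$, $\frac{|\na\cep|^2}{\cep^{3/2}}$, the reaction term $f(\cep)\frac1\e\log(1+\e\nep)$, and the convective term $\uep\cdot\na\sqrt{\cep}$.

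For the $\cep$-part I would bound each contribution using the H\"older inequality and the estimates of Lemma \ref{lem;esti1} and Lemma \ref{lem;esti2-1}: the Laplacian contribution is controlled by $\io\frac{|D^2\cep|^2}{\cep}$ and $\io\frac{|\na\cep|^4}{\cep^3}$ (both space-time integrable), the quadratic gradient term $\frac{|\na\cep|^2}{\cep^{3/2}}$ by the same fourth-order quantity together with the $L^\infty$-bound $\lp{\infty}{\cep}\le\lp{\infty}{c_0}$ from Lemma \ref{lem;basic}, the reaction term by the uniform bound on $\io\nep$ (using $\frac1\e\log(1+\e s)\le s$ and the boundedness and monotonicity of $f$), and the convective term by the estimate on $\iio|\nep\uep|^q$ or directly by the $L^{10/3}$-in-space-time bound on $\uep$ combined with Lemma \ref{lem;esti3}. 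Taking the supremum over admissible $\psi$ and integrating in time then yields $\int_0^T\|\partial_t\sqrt{\cep}\|_{(W^{2,4}_0(\Omega))^\ast}\le C(T)$.

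For the $\uep$-part I would test the third (Navier--Stokes) equation against a divergence-free $\psi\in W^{1,5/2}_{0,\sigma}(\Omega)$. The duality here is with respect to $L^{5/4}$ in time because the worst term is the convective nonlinearity $(Y_\e\uep\cdot\na)\uep$: after integration by parts this becomes $\io(Y_\e\uep\otimes\uep):\na\psi$, which is controlled by $\lp{5/2}{\uep\otimes\uep}=\lp{5}{\uep}^2$, and the $L^{10/3}$-in-space-time bound on $\uep$ together with the Gagliardo--Nirenberg interpolation gives exactly an $L^{5/4}$-in-time bound (using $\frac{10}{3}=\frac{5}{4}\cdot\frac{8}{3}$ after interpolation). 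The remaining terms are easier: $\Delta\uep$ pairs against $\na\psi$ and is controlled by $\iio|\na\uep|^2$ from Lemma \ref{lem;esti1}; the forcing $\nep\na\Phi+g_\e$ pairs directly with $\psi$ and is controlled by the $L^r$-in-space estimate for $\nep$ from Lemma \ref{lem;esti3} (with $r>6/5$) and the assumed $L^2(0,T;L^{6/5})$-bound on $g_\e$.

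The main obstacle I anticipate is the convective term in the fluid equation, where getting the correct $L^{5/4}$ time-integrability forces a careful bookkeeping of the Gagliardo--Nirenberg exponents: one must verify that the interpolation $\lp{5}{\uep}^2\lesssim\lp{2}{\na\uep}^{2a}\lp{2}{\uep}^{2(1-a)}$ with the appropriate $a$ yields a power of $\lp{2}{\na\uep}$ whose $L^{5/4}$-in-time norm is finite given only the $L^2$-in-space-time bound on $\na\uep$. This is precisely the kind of exponent computation that Lemma \ref{lem;esti3} was designed to handle, so I would invoke that lemma (or replicate its interpolation) rather than re-derive it. Since the paper states that this result follows by arguments similar to \cite[Lemma 5.3]{zhang_li}, the expected route is to reference that computation and simply record the two bounds, but the verification of the fluid-nonlinearity exponent is the one nontrivial point deserving care.
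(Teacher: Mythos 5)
Your overall strategy --- dualizing both equations against the indicated test spaces and bounding each term with the space-time estimates already in hand --- is exactly the route the paper intends; in fact the paper gives no proof of this lemma at all, deferring entirely to \cite[Lemma 5.3]{zhang_li}, so your proposal is a reconstruction of that argument. The $\partial_t\sqrt{\cep}$ half is sound: all contributions are controlled as you describe (for the reaction term one should add that $f(0)=0$ and $f\in C^2$ make $f(s)/\sqrt{s}$ bounded on $[0,\lp{\infty}{c_0}]$, which absorbs the factor $1/\sqrt{\cep}$ produced by the chain rule).

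There is, however, a genuine slip in the one step you yourself flagged as critical, the fluid convective term. Against $\na\psi\in L^{5/2}(\Omega)$ the H\"older-conjugate exponent is $5/3$, not $5/2$, so the correct pairing is
\begin{align*}
\left|\io (Y_\ep\uep\otimes\uep):\na\psi\right|
\le \|Y_\ep\uep\otimes\uep\|_{L^{5/3}(\Omega)}\,\lp{\frac 52}{\na\psi}
\le C\lp{\frac{10}{3}}{\uep}^2,
\end{align*}
after which
\begin{align*}
\int_0^T\lp{\frac{10}{3}}{\uep}^{\frac 52}
\le T^{\frac 14}\left(\iio|\uep|^{\frac{10}{3}}\right)^{\frac 34}
\end{align*}
is finite directly by Lemma \ref{lem;esti2-1}; no Gagliardo--Nirenberg interpolation is needed at this point. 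Your version instead places $\uep\otimes\uep$ in $L^{5/2}(\Omega)$, i.e.\ requires $\lp{5}{\uep}^2$, and the $L^{5/4}$-in-time norm of that quantity would need $\int_0^T\lp{2}{\na\uep}^{9/4}$ (Gagliardo--Nirenberg gives $\lp{5}{\uep}\le C\lp{2}{\na\uep}^{9/10}\lp{2}{\uep}^{1/10}$), which is \emph{not} controlled by the only available bound $\iio|\na\uep|^2\le C(T)$, since $9/4>2$. The arithmetic identity $\frac{10}{3}=\frac 54\cdot\frac 83$ does not rescue this: the space-time $L^{10/3}$ bound on $\uep$ says nothing about the spatial $L^5$ norm, as $5>\frac{10}{3}$. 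So, as literally written, the fluid estimate fails; with the conjugate exponent corrected it goes through, and the remaining terms ($\Delta\uep$, $\nep\na\Phi$, $g_\ep$) are handled as you indicate.
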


\section{Convergences: Proof of Theorem \ref{mainthm1}}\label{sec5}

Before stating convergences properties, we define weak solutions of \eqref{P}. 
%------------------def. of weak sol.----------------------%
\begin{df}\label{def;weaksol}
A  triplet $(n, c, u)$ is called 
a  {\it global weak solution} of \eqref{P} if 
$n,c,u$ satisfy 
\begin{align*}
n\in L^1_{\rm loc}(\ol{\Omega}\times [0,\infty)),
\quad c\in L^1_{\rm loc}([0,\infty);W^{1,1}(\Omega)), 
\quad u\in L^1_{\rm loc}([0,\infty);W^{1,1}_0(\Omega))
\end{align*}
and 
\begin{align*}
 \int_0^n D(s) \, ds \in L^1_{\rm loc} ([0,\infty);W^{1,1}(\Omega)), 
\quad 
 \mu n^\alpha \in L^1_{\rm loc}(\ol{\Omega}\times [0,\infty))
\end{align*}
as well as
\begin{align*}
 nu, cu,n\chi(c) \na c, u\otimes u\in L^1_{\rm loc}(\ol{\Omega}\times [0,\infty))
\end{align*}
and the identities
\begin{align*}
%------------------------  \n1  ----------------------------
    &-\int^\infty_0\!\!\!\!\int_\Omega n\vp_t
     -\int_\Omega n_0\vp(\cdot,0)
     -\int^\infty_0\!\!\!\!\int_\Omega n u\cdot\na\vp
   \\
    &\h{7.0mm}=-\int^\infty_0\!\!\!\!\int_\Omega\na \kl{\int_0^n D(s)\,ds} \cdot\na\vp
      +\int^\infty_0\!\!\!\!\int_\Omega n\chi(c) \na c\cdot\na\vp +\int^\infty_0\!\!\!\!\int_\Omega (\kappa n-\mu n^\alpha) \vp,
   \\[1.5mm]
%-------------------------  c  -----------------------------
    &-\int^\infty_0\!\!\!\!\int_\Omega c\vp_t
     -\int_\Omega c_0\vp(\cdot,0)
     -\int^\infty_0\!\!\!\!\int_\Omega cu\cdot\na\vp
   %\\
    %&\h{7.0mm}
   =-\int^\infty_0\!\!\!\!\int_\Omega\na c\cdot\na\vp
      -\int^\infty_0\!\!\!\!\int_\Omega nf(c)\vp,
   \\[1.5mm]
%-------------------------  u  -----------------------------
    &-\int^\infty_0\!\!\!\!\int_\Omega u\cdot\psi_t
     -\int_\Omega u_0\cdot\psi(\cdot,0)
     -\int^\infty_0\!\!\!\!\int_\Omega u\otimes u\cdot\na\psi
   \\
    &\h{7.0mm}=-\int^\infty_0\!\!\!\!\int_\Omega\na u\cdot\na\psi
     +\int^\infty_0\!\!\!\!\int_\Omega n\na\Phi \cdot \psi 
     + \int^\infty_0\!\!\!\!\int_\Omega  g\cdot \psi 
  \end{align*}
  hold for all $\vp\in C^{\infty}_0(\overline{\Omega}\times [0,\infty))$
  and all $\psi\in C^{\infty}_{0,\sigma}(\Omega \times [0,\infty))$,
  respectively.
\end{df}
%------------------def. of weak sol.----------------------%

Collecting the boundedness properties obtained in Lemmas \ref{lem;esti1}, \ref{lem;esti2-4}, \ref{lem;esti3}, \ref{lem;esti5} and \ref{lem;esti6}, we establish the following convergences.  

\begin{lem}\label{lem;conv1}
Assume that \eqref{condi;case1} or \eqref{condi;case2} holds. 
Then there exist a subsequence $\ep_j \searrow 0$ 
and functions $n,c,u$ such that 
for all $p\in [1,p_1)$ and $q\in [1,\infty)$, 
\begin{align*}
  \Ne &\to n && \mbox{in} \ L^p_{\rm loc}(\overline{\Omega}\times [0,\infty))  \ \mbox{and a.e.\ in} \ \Omega\times (0,\infty),
\\
 \nep^\alpha & \rightharpoonup n^\alpha && \mbox{in} \ L^1_{\rm loc}(\ol{\Omega}\times [0,\infty)),
\\
  \ep \nep^2 & \rightharpoonup 0 && \mbox{in} \ L^1_{\rm loc} (\ol{\Omega}\times [0,\infty)),   
\\
 \Ce &\to c && \mbox{in} \ L^q_{\rm loc}(\overline{\Omega}\times [0,\infty))\ \mbox{and a.e.\ in} \ \Omega\times (0,\infty),
 \\
 \Ce &\overset{\ast}{\rightharpoonup} c &&\mbox{in} \ L^\infty (\Omega\times (0,\infty)),
\\
 \nabla \Ce &\rightharpoonup \na c && \mbox{in} \ L^4_{\rm loc}(\overline{\Omega}\times [0,\infty)),
 \\ 
 \nabla \Ce^\frac{1}{4} &\rightharpoonup \na c^\frac{1}{4} 
 && \mbox{in} \ L^4_{\rm loc}(\overline{\Omega}\times [0,\infty)), 
 \\ 
 \Ue &\to u &&\mbox{in} \ L^2_{\rm loc}(\overline{\Omega}\times [0,\infty))  \ \mbox{and a.e.\ in} \ \Omega\times (0,T),
% \\
%  \Ue &\rightharpoonup u &&\mbox{in} \ L^2_{\rm loc}([0,\infty); L^6(\Omega)), 
 \\ 
 \na \Ue & \rightharpoonup \na u && \mbox{in} \ L^2_{\rm loc} (\overline{\Omega}\times [0,\infty))
\end{align*}
hold as $\ep = \ep_j \searrow 0$, 
where $p_1>\frac 43$ is the constant defined in \eqref{def;p1p2}.
\end{lem}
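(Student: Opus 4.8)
The plan is to extract convergent subsequences from the uniform bounds collected in Lemmas~\ref{lem;esti1}--\ref{lem;esti6} by combining an Aubin--Lions-type compactness argument with weak/weak-$*$ compactness in the reflexive spaces where we have $\ep$-independent control. First I would fix an exhaustion of $[0,\infty)$ by finite time intervals $[0,T]$ and argue on each such interval, extracting nested subsequences and finally passing to a diagonal subsequence so that all convergences hold simultaneously on $\overline{\Omega}\times[0,\infty)$; since the limits are uniquely determined once the a.e.\ convergences are fixed, consistency across the intervals is automatic. For the strong convergence of $\nep$ I would apply the Aubin--Lions lemma to the family $(\nep+\ep)^\gamma$ (with $\gamma=1$ if $0<m\le 2$ and $\gamma=\tfrac m2$ if $m>2$): Lemma~\ref{lem;esti2-4} (together with Lemma~\ref{lem;esti2-1} in the case $m>2$) gives the spatial regularity $\na(\nep+\ep)^\gamma\in L^{p}(\Omega\times(0,T))$ for a suitable $p>1$, while Lemma~\ref{lem;esti5} supplies the time regularity $\pa_t(\nep+\ep)^\gamma\in L^1((0,T);(W^{2,4}_0(\Omega))^\ast)$; this yields strong $L^{p}_{\rm loc}$ convergence of $(\nep+\ep)^\gamma$, hence a.e.\ convergence of $\nep$ along a subsequence, from which $\nep\to n$ in $L^{p}_{\rm loc}(\overline{\Omega}\times[0,\infty))$ for every $p\in[1,p_1)$ follows by the uniform $L^{p_1}$ bound of Lemma~\ref{lem;esti2-4} and Vitali's theorem. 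The same scheme handles $\cep$ and $\uep$: Lemma~\ref{lem;esti6} provides the time regularity of $\sqrt{\cep}$ and of $\uep$, while Lemmas~\ref{lem;esti1} and~\ref{lem;esti2-1} provide the spatial bounds, giving strong $L^q_{\rm loc}$ convergence of $\cep$ and strong $L^2_{\rm loc}$ convergence of $\uep$, together with the stated a.e.\ convergences.

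Having secured the a.e.\ limits, I would then read off the remaining weak and weak-$*$ convergences directly from the uniform bounds, using that in reflexive (or dual) spaces bounded sequences converge weakly along subsequences and that the weak limit must coincide with the a.e.\ limit whenever the latter exists. Thus the $L^\infty(\Omega\times(0,\infty))$ bound of Lemma~\ref{lem;basic} gives $\cep\overset{\ast}{\rightharpoonup}c$; the $L^4_{\rm loc}$ bounds of Lemmas~\ref{lem;esti1} and~\ref{lem;esti2-1} give $\na\cep\rightharpoonup\na c$ and $\na\cep^{1/4}\rightharpoonup\na c^{1/4}$ (here the identification of the weak limit with $\na c^{1/4}$ uses the a.e.\ convergence of $\cep$ to pass the fractional power through); the bound on $\na\uep$ gives $\na\uep\rightharpoonup\na u$; and for the nonlinear terms $\nep^\alpha\rightharpoonup n^\alpha$ and $\ep\nep^2\rightharpoonup0$ I would combine the uniform $L^1$-type bounds from Lemma~\ref{lem;basic} and the space--time integral bound~\eqref{esti;spacetimeintegral} with the a.e.\ convergence $\nep\to n$: the $\mu n^\alpha$ term passes by equi-integrability (Vitali), controlled by $\io\nep^\alpha\log\nep$, while $\ep\nep^2\to0$ follows since $\ep\iint\nep^2\log\nep$ is bounded so $\ep\iint\nep^2\to0$.

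The main obstacle is establishing \emph{strong} rather than merely weak convergence of $\nep$, because the later passage to the limit in the weak formulation (Definition~\ref{def;weaksol}) requires identifying the weak limits of the nonlinear quantities $\int_0^{\nep}\de(s)\,ds$, $\nep\chi(\cep)\na\cep$, $\nep\uep$ and $\nep^\alpha$ with the corresponding expressions in the limit functions, and this identification is only possible once $\nep\to n$ a.e. The delicate point is that when $m>2$ we do \emph{not} control $\na\nep$ itself but only $\na(\nep+\ep)^{m/2}$ and $\na(\nep+\ep)^{m-1}$, so the compactness argument must be run on the nonlinear quantity $(\nep+\ep)^{\gamma}$ and the a.e.\ convergence of $\nep$ recovered afterwards by inverting the (monotone) map $s\mapsto s^{\gamma}$. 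A secondary subtlety is verifying the requisite equi-integrability for the Vitali-type upgrades from a.e.\ to $L^p$ convergence; this is exactly where the uniform $L^{p_1}$ bound with $p_1>\tfrac43$ from Lemma~\ref{lem;esti2-4} (see Remark~\ref{remark;zhang-li;esti}) is essential, ensuring $p_1>1$ strictly so that $(\nep)$ is equi-integrable and the convergence holds in $L^p_{\rm loc}$ for all $p<p_1$. Once all these convergences are in hand, standard lower semicontinuity and diagonalization complete the extraction of the claimed subsequence $\ep_j\searrow0$.
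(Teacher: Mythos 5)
Your proposal follows essentially the same route as the paper's proof: Aubin--Lions compactness applied to $(\nep+\ep)^\gamma$ (with $\gamma=1$ for $0<m\le 2$ and $\gamma=\frac m2$ for $m>2$) using the spatial bounds of Lemmas \ref{lem;esti2-1} and \ref{lem;esti2-4} and the time regularities of Lemmas \ref{lem;esti5} and \ref{lem;esti6}, followed by a Vitali upgrade to $L^p_{\rm loc}$ for $p<p_1$ and identification of the remaining weak/weak-$\ast$ limits with the a.e.\ limits. The only cosmetic difference is that you argue via Vitali/equi-integrability for $\nep^\alpha\rightharpoonup n^\alpha$ and $\ep\nep^2\rightharpoonup 0$ where the paper invokes the Dunford--Pettis theorem, but both rest on the same $\log$-weighted bound \eqref{esti;spacetimeintegral}, so the arguments coincide in substance.
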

\begin{proof}
This proof is based on \cite[Proof of Theorem 1.1]{zhang_li} and \cite[Proposition 6.1]{lankeit_evsmoothness}. 
Let $T>0$ and let  
\[
 \beta := 
  \begin{cases}
   p_2 & (0 < m \le 2),
  \\
   2 & (m>2)
  \end{cases}
  \quad \mbox{and} \quad  
  \gamma := 
   \begin{cases}
    1 & (0 < m \le 2),
   \\
    \frac m2 & (m>2),
   \end{cases}
\]
where $p_2$ is the constant defined in \eqref{def;p1p2}. 
Then since  Lemmas \ref{lem;esti2-4} and \ref{lem;esti5} 
derive that
\[
\left( (\Ne + \ep)^\gamma \right)_{\ep \in (0,1)} 
\ \mbox{is bounded in} \  L^\beta (0,T; W^{1,\beta}(\Omega)) 
\]
and 
\[ 
\left(
 \partial_t (\Ne+\ep)^\gamma )_{\ep\in(0,1)} \ \mbox{is bounded in}\ 
 L^1(0,T; (W^{2,4}_0(\Omega))^\ast 
\right)
\]
hold, a combination of the compact embedding 
$W^{1,\beta}(\Omega) \hookrightarrow L^\beta(\Omega) $ 
and the continuous embedding $L^\beta (\Omega) \hookrightarrow (W^{2,4}_0(\Omega))^\ast$ together with an Aubin--Lions-type lemma (see \cite[Corollary 4]{Simon_1987}) enables us to find a subsequence $\ep_j \searrow 0$ 
and a function $n$ such that 
\[
 (\Ne + \ep)^\gamma \to n^\gamma \quad \mbox{in} \ L^\beta (\Omega\times (0,T))\ \mbox{and a.e\ in} \  \Omega\times (0,T)  
\]
as $\ep =\ep_j\searrow 0$. 
Therefore, aided by Lemma \ref{lem;esti2-4}, 
we obtain from the Vitali convergence theorem that 
\[
\Ne \to n  \quad \mbox{in} \ L^p(\Omega\times (0,T)) \ \mbox{and a.e.\ in} \ \Omega\times (0,\infty)
\]
as $\ep=\ep_j\searrow 0$ for all $p\in [1,p_1)$,
where we used the relation $r > \frac{6}{5}$ ($r$ is the constant obtained in Lemma \ref{lem;esti3}). 
Moreover, since Lemma \ref{lem;esti1} yields that $(\nep^\alpha)_{\ep\in (0,1)}$ and $(\ep \nep^2)_{\ep\in (0,1)}$ are 
weakly relatively precompact by the Dunford--Pettis theorem, we can find a further subsequence (again denoted by $\ep_j$) and functions $z_1,z_2\in L^1(\Omega\times (0,T))$ such that 
\[
\nep^\alpha \rightharpoonup z_1, 
 \quad 
\ep \nep^2 \rightharpoonup z_2 
 \quad 
\mbox{in}\ L^1(\Omega\times (0,T)) 
\]
as $\ep=\ep_j\searrow 0$, which with pointwise a.e.\ 
convergences implies that $z_1=n^\alpha$ and $z_2=0$ (for more details, see the proof of \cite[Proposition 6.1]{lankeit_evsmoothness}).  
On the other hand, noticing from Lemmas \ref{lem;esti1} and \ref{lem;esti6} 
that $(\sqrt{\Ce})_{\ep\in (0,1)}$ and $(\partial_t \sqrt{\Ce})_{\ep\in(0,1)}$ are bounded in $L^2(0,T;W^{2,2}(\Omega))$ and 
$L^{1}(0,T; (W^{2,4}_{0}(\Omega))^\ast)$, respectively, 
as well as $(\Ue)_{\ep\in (0,1)}$ and $(\partial_t \Ue)_{\ep\in (0,1)}$ 
are bounded in $L^2(0,T;W_{0,\sigma}^{1,2}(\Omega))$ and $L^\frac{5}{4}(0,T;(W_{0,\sigma}^{1,\frac 52}(\Omega)^\ast))$, respectively, 
we obtain from the Aubin--Lions-type lemma that 
there are a further subsequence (again denoted by $\ep_j$) and 
functions $c, u$ such  that 
\begin{align*}
 \sqrt{\Ce} \to \sqrt{c} \quad \mbox{in} \ L^2(0,T;W^{1,2}(\Omega))
 \ \mbox{and a.e.\ in}\ \Omega\times (0,\infty)
\end{align*}
and 
\begin{align*}
 \Ue \to u \quad \mbox{in} \ L^2(\Omega\times(0,T))\ \mbox{and a.e.\ in} \ \Omega\times (0,\infty). 
\end{align*}
These convergences and Lemmas \ref{lem;basic}, \ref{lem;esti1} 
and \ref{lem;esti2-1} %and \ref{lem;esti3} 
entail this lemma. 
\end{proof}

Then, aided by Lemma \ref{lem;conv1}, we can obtain the following convergences which are needed for the proof of Theorem \ref{mainthm1}. 

%%%%%%%%%%%%%%%%%%%
\begin{lem}\label{lem;conv2}
Assume that \eqref{condi;case1} or \eqref{condi;case2} holds and 
let $\ep_j,n,c,u$ be obtained in Lemma \ref{lem;conv1}. 
Then 
\begin{align*}
\Ne\Ue  \rightharpoonup nu, 
\quad 
\Ce\Ue  \rightharpoonup cu, 
\quad 
 \frac{f(\Ce)}{\ep} \log (1+\ep \Ne) & \to nf(c)
%\quad 
%Y_\ep \Ue \otimes \Ue \to u\otimes u
\quad \mbox{in} \ L^1_{\rm loc}(\overline{\Omega}\times [0,\infty))
\end{align*}
and 
\begin{align*}
Y_\ep \Ue \otimes \Ue \to u\otimes u, 
\quad \frac{\Ne}{1+\ep \Ne}\chi(\Ce) \na \Ce 
&\rightharpoonup n\chi(c)\na c
\quad \mbox{in} \ L^1_{\rm loc}(\overline{\Omega}\times [0,\infty)). 
\end{align*}
\end{lem}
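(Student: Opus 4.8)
The plan is to deduce every convergence from the a.e., strong and weak convergences collected in Lemma~\ref{lem;conv1}, together with the uniform bounds of Lemmas~\ref{lem;esti1}, \ref{lem;esti2-1} and \ref{lem;esti3}. Two mechanisms suffice. For all the products \emph{except} the chemotactic flux I shall in fact prove convergence in $L^1_{\rm loc}(\ol\Omega\times[0,\infty))$ (from which the asserted weak convergences follow a fortiori) by combining pointwise a.e.\ convergence of the integrand with an $L^s_{\rm loc}$ bound for some $s>1$ and then invoking the Vitali convergence theorem. The chemotactic flux, the only product that retains a genuinely weakly convergent factor, will instead be treated by a strong-times-weak pairing.

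First I would handle $\Ne\Ue$ and $\Ce\Ue$. Since $\Ne\to n$ and $\Ue\to u$ a.e., we have $\Ne\Ue\to nu$ a.e.; as Lemma~\ref{lem;esti3} bounds $\iio|\Ne\Ue|^q$ for some $q>1$, the family is equi-integrable, and Vitali gives $\Ne\Ue\to nu$ in $L^1_{\rm loc}$. For $\Ce\Ue$ I would write $\Ce\Ue-cu=\Ce(\Ue-u)+(\Ce-c)u$ and use the uniform bound $\lp{\infty}{\Ce\cd}\le\lp{\infty}{c_0}$ of Lemma~\ref{lem;basic} together with $\Ue\to u$ and $\Ce\to c$ in $L^2_{\rm loc}$; by H\"older both summands vanish in $L^1_{\rm loc}$.

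Next are the two terms in which $\ep$ sits inside the nonlinearity. For $\frac{f(\Ce)}{\ep}\log(1+\ep\Ne)$ I would use the elementary estimate $0\le s-\frac{1}{\ep}\log(1+\ep s)\le\frac{\ep s^2}{2}$ for $s\ge0$: evaluated at $s=\Ne$ its right-hand side tends to $0$ a.e.\ (as $\Ne\to n$ with $n$ finite a.e.), so $\frac{1}{\ep}\log(1+\ep\Ne)\to n$ a.e., and since $f(\Ce)\to f(c)$ a.e.\ the whole term converges to $nf(c)$ a.e.; moreover $\frac{1}{\ep}\log(1+\ep\Ne)\le\Ne$ and the boundedness of $f$ on $[0,\lp{\infty}{c_0}]$ give the domination $0\le\frac{f(\Ce)}{\ep}\log(1+\ep\Ne)\le C\Ne$, and because $\Ne\to n$ in $L^p_{\rm loc}$ for some $p>1$ this dominating sequence is equi-integrable, whence Vitali yields $L^1_{\rm loc}$ convergence. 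For $Y_\ep\Ue\otimes\Ue$ I would first obtain $\Ue\otimes\Ue\to u\otimes u$ in $L^1_{\rm loc}$ from the a.e.\ convergence of $\Ue$ and the $L^{\frac{10}{3}}_{\rm loc}$ bound of Lemma~\ref{lem;esti2-1} (which bounds $\Ue\otimes\Ue$ in $L^{\frac53}_{\rm loc}$, giving equi-integrability), and then remove the operator via $Y_\ep\Ue\otimes\Ue-u\otimes u=(Y_\ep\Ue-u)\otimes\Ue+u\otimes(\Ue-u)$, using that $Y_\ep=(1+\ep A)^{-1}$ is an $L^2_\sigma$-contraction converging strongly to the identity, so $Y_\ep\Ue-u=Y_\ep(\Ue-u)+(Y_\ep u-u)\to0$ in $L^2_{\rm loc}$.

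The main obstacle is the chemotactic flux $\frac{\Ne}{1+\ep\Ne}\chi(\Ce)\na\Ce$, whose last factor only converges weakly, namely $\na\Ce\rightharpoonup\na c$ in $L^4_{\rm loc}$. The plan is to show that the prefactor converges \emph{strongly}: since $\frac{\Ne}{1+\ep\Ne}\le\Ne$ and $\frac{\Ne}{1+\ep\Ne}\to n$ a.e., domination by $\Ne$ gives $\frac{\Ne}{1+\ep\Ne}\to n$ in $L^p_{\rm loc}$ for every $p\in(1,p_1)$, while $\chi(\Ce)$ is bounded (as $\Ce\in[0,\lp{\infty}{c_0}]$) with $\chi(\Ce)\to\chi(c)$ a.e.; splitting the product as before then yields $\frac{\Ne}{1+\ep\Ne}\chi(\Ce)\to n\chi(c)$ strongly in $L^p_{\rm loc}$ for some $p\in(\frac43,p_1)$, which is available \emph{precisely because $p_1>\frac43$} (cf.\ Remark~\ref{remark;zhang-li;esti}). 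As $\Omega\times(0,T)$ has finite measure, this gives strong convergence in $L^{\frac43}_{\rm loc}$, the exponent conjugate to $4$; pairing it with $\na\Ce\rightharpoonup\na c$ in $L^4_{\rm loc}$ through the strong-times-weak lemma delivers $\frac{\Ne}{1+\ep\Ne}\chi(\Ce)\na\Ce\rightharpoonup n\chi(c)\na c$ in $L^1_{\rm loc}$, completing the argument.
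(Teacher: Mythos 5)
Your proposal is correct, and its engine is the same one the paper runs on: pointwise a.e.\ convergence combined with equi-integrability coming from uniform bounds with exponents strictly above the needed ones (crucially $p_1>\frac43$), closed by the Vitali convergence theorem, plus a strong-times-weak pairing for the one term with a genuinely weak factor. Two steps, however, are taken by a different route, and both are worth comparing. For the chemotactic flux the paper does not pair $\frac{\Ne}{1+\ep\Ne}\chi(\Ce)$ directly with $\na\Ce$; it first rewrites the flux as $\Ne\cdot\frac{4}{1+\ep\Ne}\chi(\Ce)\Ce^{3/4}\cdot\na\Ce^{1/4}$, sends the bounded middle factor to $4\chi(c)c^{3/4}$ in every $L^q$ by dominated convergence, and then pairs $\Ne\to n$ (strong in $L^p_{\rm loc}$, $p<p_1$) against $\na\Ce^{1/4}\rightharpoonup\na c^{1/4}$ (weak in $L^4_{\rm loc}$). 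Your direct pairing of the strongly convergent prefactor in $L^{4/3}_{\rm loc}$ with $\na\Ce\rightharpoonup\na c$ in $L^4_{\rm loc}$ is legitimate precisely because Lemma \ref{lem;conv1} records that weak convergence as well, and it is slightly shorter; the paper's splitting through $\Ce^{3/4}\na\Ce^{1/4}$ is the form inherited from the energy structure (where the natural quantity is $\na\Ce^{1/4}$), but nothing in the limit identification requires it. Second, for $Y_\ep\Ue\otimes\Ue$ the paper simply invokes the argument of \cite[(4.26)]{Winkler_2016}, whereas you give a self-contained proof from the facts that $Y_\ep=(1+\ep A)^{-1}$ is an $L^2_\sigma(\Omega)$-contraction converging strongly to the identity; this fills in a step the paper outsources (note that your preliminary Vitali argument for $\Ue\otimes\Ue$ becomes redundant once you have the decomposition $(Y_\ep\Ue-u)\otimes\Ue+u\otimes(\Ue-u)$). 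The remaining items ($\Ne\Ue$ and $\Ce\Ue$ via Lemma \ref{lem;esti3} and the $L^\infty$ bound on $\Ce$, and the logarithmic term via $0\le s-\frac1\ep\log(1+\ep s)\le\frac{\ep s^2}{2}$ plus the $L^{p_1}$ bound) match the paper's treatment, with your elementary inequality making explicit the a.e.\ convergence that the paper leaves implicit.
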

\begin{proof}
Let $T>0$. 
 From Lemmas \ref{lem;esti3} and \ref{lem;conv1} we can see that 
 \begin{align*}
\Ne\Ue  \rightharpoonup nu, 
\quad 
\Ce\Ue  \rightharpoonup cu 
\quad \mbox{in} \ L^1(\Omega\times(0,T)) 
\end{align*}
and  
\[
 \frac{f(\Ce)}{\ep} \log (1+\ep \Ne)  \to nf(c) \quad 
 \mbox{a.e.\ in} \ \Omega\times(0,T)  
\]
hold as $\ep=\ep_j\searrow 0$. 
This together with the estimate  
\begin{align*}
 \iio \left| \frac{f(\Ce)}{\ep} \log (1+\ep \Ne)\right |^{p_1} 
 & \le \|f(s)\|_{L^\infty(0,\lp{\infty}{c_0})}^{p_1} \iio |\Ne|^{p_1} 
 \\
 &\le C_1(T)
\end{align*}
with some $C_1(T)>0$ (from Lemma \ref{lem;esti2-4}) implies from 
the Vitali convergence theorem that 
\[
 \frac{f(\Ce)}{\ep} \log (1+\ep \Ne) \to nf(c)
\quad \mbox{in} \ L^1_{\rm loc}(\overline{\Omega}\times [0,\infty))
\]
as $\ep=\ep_j \searrow 0$. Moreover, a combination of Lemma \ref{lem;conv1} and arguments in the proof of \cite[(4.26)]{Winkler_2016} yields that 
\[ 
Y_\ep \Ue \otimes \Ue \to u\otimes u \quad \mbox{in} \ L^1(\Omega\times(0,T)) 
\] 
as $\ep = \ep_j\searrow 0$. 
Now, in light of Lemmas \ref{lem;basic} and \ref{lem;conv1} together with the condition for $\chi$ (see \eqref{condi;chi}), 
the dominated convergence theorem implies that 
\[
 \frac{1}{1+\ep\Ne}\chi(\Ce) \Ce^{\frac{3}{4}} 
\to \chi(c)c^\frac 34 \quad \mbox{in} \ L^q(\Omega\times (0,T)) 
\]
for all $q\in [1,\infty)$. 
Therefore, noticing that 
$p_1 > \frac{4}{3}$, 
we obtain from Lemma \ref{lem;conv1} that 
\begin{align*}
 \frac{\Ne}{1+\ep \Ne} \chi(\Ce)\na \Ce & = \Ne \cdot \frac{4}{1+\ep\Ne}\chi(\Ce)\Ce^\frac 34 \cdot \na \Ce^\frac{1}{4} 
\\
 &\rightharpoonup 4n \cdot \chi(c)c^\frac 34 \cdot \na c^\frac 14 
 = n\chi(c)\na c 
 \quad \mbox{in} \ L^1(\Omega\times (0,T))
\end{align*}
as $\ep = \ep_j\searrow 0$. %, 
%where we use a chain rule:
%$
% \na c = \na ( c^\frac 14 )^4 
% = ( c^\frac{1}{4} )^3 \na c^\frac 14 
%$ (from the $L^\infty$-boundedness of $c$). 
\end{proof}

We also desire the following lemma to show Theorem \ref{mainthm1}. 

\begin{lem}\label{lem;conv3}
Assume that \eqref{condi;case1} or \eqref{condi;case2} holds. 
Then $\int_0^n D(\sigma)\,d\sigma \in L^1_{\rm loc}([0,\infty);W^{1,1}(\Omega))$ and there is a further subsequence \/\,{\rm (}again denoted by $\ep_j${\rm )}\/\, such that 
\begin{align*}
\na \left( \int_{-\ep}^{\Ne} \de(\sigma) \,d\sigma \right) \rightharpoonup 
\na \left(\int_0^n D(\sigma) \, d\sigma \right) 
\quad \mbox{in} \ L^{p_2}_{\rm loc}(\overline{\Omega}\times [0,\infty)) 
\end{align*}
as $\ep = \ep_j \searrow 0$, where $p_2\in (1,2)$ is the constant defined in \eqref{def;p1p2}. 
Moreover, if $1 \le m \le 2$, then 
\[
 \na \left( \int_0^n D(\sigma)\, d\sigma \right) = D(n)\na n.
\]
\end{lem}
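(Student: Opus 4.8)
The plan is to recognize the approximate quantity as the gradient of something we already control in $L^{p_2}$, extract a weak limit, and then identify that limit first as $\na\bigl(\int_0^n D(\sigma)\,d\sigma\bigr)$ and, when $1\le m\le 2$, as $D(n)\na n$. The starting observation is that, writing $G(s):=\int_0^s D(\sigma)\,d\sigma$ and substituting $\tau=\sigma+\ep$, one has $\int_{-\ep}^{\Ne}\de(\sigma)\,d\sigma=\int_0^{\Ne+\ep}D(\tau)\,d\tau=G(\Ne+\ep)$, so that $\na\bigl(\int_{-\ep}^{\Ne}\de(\sigma)\,d\sigma\bigr)=\de(\Ne)\na\Ne$. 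By Lemma \ref{lem;esti2-4} the family $(\de(\Ne)\na\Ne)_\ep$ is bounded in $L^{p_2}(\Omega\times(0,T))$ for every $T>0$ with $p_2\in(1,2)$; since $L^{p_2}$ is reflexive, I would pass to a further subsequence (again denoted $\ep_j$) along which $\de(\Ne)\na\Ne\rightharpoonup\xi=(\xi_i)$ in $L^{p_2}_{\rm loc}(\overline{\Omega}\times[0,\infty))$ for some $\xi$.

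Next I would show $G(\Ne+\ep)\to G(n)$ in $L^1_{\rm loc}$. From \eqref{condi;D} one has $G(s)\le\frac{D_2}{m}s^m$, and since $m<p_1$ in both \eqref{condi;case1} and \eqref{condi;case2} (indeed $p_1-m=\tfrac23$ in Case~1, while $p_1=\alpha>\tfrac43>m$ in Case~2), Lemma \ref{lem;esti2-4} makes $((\Ne+\ep)^m)_\ep$ bounded in $L^{p_1/m}$ with $p_1/m>1$, hence uniformly integrable. Combined with the a.e.\ convergence $\Ne\to n$ from Lemma \ref{lem;conv1} and continuity of $G$, the Vitali convergence theorem yields $G(\Ne+\ep)\to G(n)$ in $L^1_{\rm loc}$, and in particular $G(n)=\int_0^n D(\sigma)\,d\sigma\in L^1_{\rm loc}$. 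Integrating by parts and testing the identity $\iio G(\Ne+\ep)\,\partial_{x_i}\varphi=-\iio(\de(\Ne)\,\partial_{x_i}\Ne)\,\varphi$ against $\varphi\in C_0^\infty(\Omega\times(0,\infty))$ and letting $\ep=\ep_j\searrow0$ then gives $\iio G(n)\,\partial_{x_i}\varphi=-\iio\xi_i\varphi$, so $\xi=\na G(n)$ distributionally. Since $\xi\in L^{p_2}\subset L^1$, this proves $\int_0^n D(\sigma)\,d\sigma\in L^1_{\rm loc}([0,\infty);W^{1,1}(\Omega))$ together with the asserted weak convergence.

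It remains to identify $\xi=D(n)\na n$ when $1\le m\le 2$, which is where the restriction on $m$ enters. For $1\le m\le 2$ (so Case~2 is excluded, since it forces $m\le\tfrac23$) Lemma \ref{lem;esti2-4} bounds $\na\Ne$ in $L^{p_3}_{\rm loc}$ with $p_3=\frac{3m+2}{4}>1$; hence, after a further subsequence, $\na\Ne\rightharpoonup v$ in $L^{p_3}_{\rm loc}$, and comparing distributional limits with $\Ne\to n$ in $L^1_{\rm loc}$ forces $v=\na n$, i.e.\ $n\in W^{1,p_3}_{\rm loc}$. Simultaneously $\de(\Ne)=D(\Ne+\ep)\to D(n)$ a.e.; since $D(\Ne+\ep)\le D_2(\Ne+\ep)^{m-1}$ with $m-1\in[0,1]$ and $(\Ne+\ep)^{m-1}$ bounded in $L^{p_1/(m-1)}$, Vitali gives strong convergence $\de(\Ne)\to D(n)$ in $L^q$ for every $q<p_1/(m-1)$, and a direct computation shows $p_3'=\frac{3m+2}{3m-2}<\frac{3m+2}{3(m-1)}=p_1/(m-1)$ for $m\in(1,2]$, so such a $q\ge p_3'$ is available (for $m=1$ the factor $D$ is bounded and strong convergence holds in every $L^q$). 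The standard strong--weak product argument --- splitting $\iio\de(\Ne)\na\Ne\cdot\varphi-\iio D(n)\na n\cdot\varphi$ into $\iio(\de(\Ne)-D(n))\na\Ne\cdot\varphi$, controlled by the $L^q$-strong and $L^{p_3}$-bounds, and $\iio D(n)(\na\Ne-\na n)\cdot\varphi$, which vanishes by weak $L^{p_3}$-convergence since $D(n)\varphi\in L^{p_3'}_{\rm loc}$ --- then shows $\de(\Ne)\na\Ne\rightharpoonup D(n)\na n$ distributionally. Uniqueness of the limit gives $\xi=D(n)\na n$, i.e.\ $\na\bigl(\int_0^n D(\sigma)\,d\sigma\bigr)=D(n)\na n$.

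The main obstacle is precisely this last identification of the weak limit: the strong--weak product convergence requires the three Hölder exponents $q$, $p_3$, $p_3'$ to close up, and this is exactly what pins down the range $1\le m\le 2$ --- one needs $m\ge1$ so that $\de(\Ne)$ converges \emph{strongly}, the singularity of $D$ near the origin being absent, and $m\le2$ so that $\na\Ne$ is bounded in a reflexive space. The $L^1_{\rm loc}$ convergence of $G(\Ne+\ep)$ via Vitali is the other point needing care, but it is routine once $m<p_1$ is noted.
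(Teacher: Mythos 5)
Your proposal is correct and follows essentially the same route as the paper: extract a weak $L^{p_2}$ limit of $\de(\Ne)\na\Ne$ from Lemma \ref{lem;esti2-4}, identify it via a.e.\ convergence plus Vitali applied to the antiderivative $\int_{-\ep}^{\Ne}\de(\sigma)\,d\sigma=\int_0^{\Ne+\ep}D(\sigma)\,d\sigma$ (the paper gets strong $L^{p_2}$ convergence using $p_1/m>p_2$, you get $L^1_{\rm loc}$ and identify distributionally --- both suffice), and for $1\le m\le 2$ combine strong convergence $\de(\Ne)\to D(n)$ in $L^{\frac{3m+2}{3m-2}}$ (Vitali) with weak convergence $\na\Ne\rightharpoonup\na n$ in $L^{\frac{3m+2}{4}}$, these exponents being H\"older conjugate. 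Your explicit strong--weak splitting of the product is just a spelled-out version of the paper's final step.
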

\begin{proof}
Let $T>0$. 
From Lemma \ref{lem;esti2-4} we can see that 
there exist a further subsequence (again denoted by $\ep_j$) and a function $w$ such that 
 \[
  \na \left( \int_{-\ep}^{\Ne} \de(\sigma)\, d\sigma \right) 
  \rightharpoonup w 
  \quad \mbox{in} \ L^{p_2}(\Omega \times (0,T)) 
 \]
 as $\ep=\ep_j\searrow 0$. 
 In order to see $w = \na (\int_0^n D(\sigma)\, d\sigma )$  
 we shall show that $\int_{-\ep}^{\Ne} \de(\sigma)\, d\sigma \to \int_0^n D(\sigma)\, d\sigma $ in $L^{p_2}(\Omega\times (0,T))$. 
Since Lemma \ref{lem;conv1} asserts that $\Ne + \ep \to n$ a.e.\ in $\Omega\times (0,T)$, 
we first establish that 
\[
 \int_{-\ep}^{\Ne} \de(\sigma)\,d\sigma = \int_0^{\Ne+\ep} D(\sigma)\,d\sigma  \to \int_0^{n} D(\sigma)\, d\sigma \quad \mbox{a.e.\ in} \ \Omega\times (0,T).
\]
On the other hand, 
the condition for $D$ (see \eqref{condi;D}) and Lemma \ref{lem;esti2-4} derive that 
\begin{align*}
 \iio \left| \int_{-\ep}^{\Ne} \de(\sigma)\, d\sigma \right|^{\frac{p_1}{m}} 
& \le D_2^{\frac{p_1}{m}} 
 \iio \left( \int_{-\ep}^{\Ne} (\sigma + \ep)^{m-1}\, d\sigma \right)^{\frac{p_1}{m}} 
 \\
 &=  \left(\frac{D_2}{m} \right)^{\frac{p_1}{m}} 
 \iio  \left( ( \Ne+\ep )^m \right)^{\frac{p_1}{m}} 
 \le C_1(T)
\end{align*}
with some $C_1(T)>0$. 
Therefore the Vitali convergence theorem entails from the fact $\frac {p_1}m > p_2$ that 
\[
\int_{-\ep}^{\Ne} \de(\sigma)\, d\sigma \to \int_0^n D(\sigma)\, d\sigma  \quad \mbox{in} \ L^{p_2}(\Omega\times (0,T)),
\]
which means that $w$ coincides with $\na (\int_0^n D(\sigma)\, d\sigma )$. 
Moreover, if $1 \le m \le 2$, then 
from Lemma \ref{lem;esti2-2} and the Vitali convergence theorem 
we can find a further subsequence (again denoted by $\ep_j$) such that 
\[
 \de(\Ne) \to D(n) \quad \mbox{in} \  L^{\frac{3m+2}{3m-2}}(\Omega\times (0,T)) 
\]
and 
\[
 \na \Ne \rightharpoonup \na n \quad \mbox{in} \ L^{\frac{3m+2}{4}}(\Omega\times (0,T))
\]
as $\ep = \ep_j \searrow 0$. 
Thus we see that 
\[
 \na \left( \int_{-\ep}^{\Ne} \de(\sigma) \,d\sigma \right) 
  = \de(\Ne) \na \Ne \rightharpoonup D(n) \na n 
  \quad \mbox{in} \ L^1(\Omega\times (0,T)), 
\]
which means that $\na (\int_0^n D(\sigma)\, d\sigma) = D(n) \na n$ holds. 
\end{proof}

Thanks to convergences established in Lemmas \ref{lem;conv1}, 
\ref{lem;conv2} and \ref{lem;conv3}, we can establish existence of a global weak solution in the sense of Definition \ref{def;weaksol}. 

\begin{proof}[{\bf Proof of Theorem \ref{mainthm1}}]
Testing the first and second equations in \eqref{Pe} by $\vp\in C^{\infty}_0(\overline{\Omega}\times [0,\infty))$
and testing the third equation in \eqref{Pe} by $\psi\in C^{\infty}_{0,\sigma}(\Omega \times [0,\infty))$, we have that 
\begin{align*}
%------------------------  \n1  ----------------------------
    -\int^\infty_0\!\!\!\!\int_\Omega \nep \vp_t &
     -\int_\Omega n_{0\ep}\vp(\cdot,0)
     -\int^\infty_0\!\!\!\!\int_\Omega \nep \uep \cdot\na\vp
   \\
    &%\h{7.0mm} 
    =-\int^\infty_0\!\!\!\!\int_\Omega\na \kl{\int_0^{\nep} \de(s)\,ds} \cdot\na\vp
      +\int^\infty_0\!\!\!\!\int_\Omega \frac{\nep\chi(\cep)}{1+\ep \nep} \na \cep \cdot\na\vp% 
   \\ & \quad\, 
      + \int^\infty_0\!\!\!\!\int_\Omega (\kappa \nep - \mu \nep^\alpha - \ep \nep^2)\vp
%   \\[1.5mm]
\end{align*}
and 
\begin{align*}
%-------------------------  c  -----------------------------
    -\int^\infty_0\!\!\!\!\int_\Omega \cep \vp_t &
     -\int_\Omega c_{0\ep}\vp(\cdot,0)
     -\int^\infty_0\!\!\!\!\int_\Omega \cep \uep \cdot\na\vp
   \\
    %&\h{7.0mm}
    & =-\int^\infty_0\!\!\!\!\int_\Omega\na \cep \cdot\na\vp
      -\frac 1\ep \int^\infty_0\!\!\!\!\int_\Omega f(\cep)\log (1+\ep \nep) \vp %,
%   \\[1.5mm]
\end{align*}
as well as
\begin{align*}
%-------------------------  u  -----------------------------
    -\int^\infty_0\!\!\!\!\int_\Omega \uep \cdot\psi_t & 
     -\int_\Omega u_{0\ep}\cdot\psi(\cdot,0)
     -\int^\infty_0\!\!\!\!\int_\Omega Y_\e \uep \otimes \uep \cdot\na\psi
   \\
    & %\h{7.0mm} 
    = -\int^\infty_0\!\!\!\!\int_\Omega\na \uep \cdot\na\psi
     +\int^\infty_0\!\!\!\!\int_\Omega \nep \na\Phi \cdot \psi 
     + \int^\infty_0\!\!\!\!\int_\Omega  g_\e\cdot \psi 
  \end{align*}
for all $\ep\in (0,1)$. 
Now we note that 
the condition \eqref{condi;alpha,m} implies that 
\eqref{condi;case1} or \eqref{condi;case2} holds. 
Thus, plugging \eqref{ini;app;nep}--\eqref{ini;app;gep} and Lemmas \ref{lem;conv1}, \ref{lem;conv2}, \ref{lem;conv3} into the above identities, by taking the limit as $\ep=\ep_j\searrow 0$ we can obtain Theorem \ref{mainthm1}. 
\end{proof}

\begin{remark}
This paper shows existence of global weak solutions under some largeness condition for $m$ and $\alpha$. However, because of difficulties of the nonlinear diffusion, we could not deal with large time behavior of these solutions; the large time behavior of solutions to \eqref{P} is still an open problem. 
In view of the results in \cite{Lankeit_2016,win_transAMS} we can expect that 
solutions become smooth after some time and converge to the steady state. 
\end{remark}

%%==============================================================%%
%%==============                                  ==============%%
%%======                                                  ======%%
%%====                                                      ====%%
%%==                         Reference                        ==%%
%%====　　　　                            　　　 　         ====%%
%%======                                                  ======%%
%%==============                                  ==============%%
%%==============================================================%%
%\newpage

%\bibliographystyle{plain}
%\bibliography{bibfile}% References

\end{document}